\definecolor{battleshipgrey}{rgb}{0.52, 0.52, 0.51} 
\theoremstyle{plain}
\newtheorem{theorem}{Theorem}[section]
\newtheorem{lemma}[theorem]{Lemma}
\newtheorem{proposition}[theorem]{Proposition}
\newtheorem{corollary}[theorem]{Corollary}
\newtheorem{definition}[theorem]{Definition}
\theoremstyle{remark}
\newtheorem{remark}{Remark}[section]
\newtheorem{example}{Example}[section]
\newtheorem*{notation}{Notation}
\numberwithin{equation}{section}
\newcommand{\bA}{\mathbb{A}}
\newcommand{\bB}{\mathbb{B}}
\newcommand{\bC}{\mathbb{C}}
\newcommand{\K}{\mathbb{K}}
\newcommand{\R}{\mathbb{R}}
\newcommand{\Z}{\mathbb{Z}}
\newcommand{\N}{\mathbb{N}}
\newcommand{\F}{\mathbb{F}}
\newcommand{\bH}{\mathbb{H}}
\newcommand{\cX}{\mathcal{X}}
\newcommand{\cP}{{\mathcal P}}
\newcommand{\s}{\mathfrak{S}}
\newcommand{\sss}{\mathbf{s}}
\newcommand{\ttt}{\mathbf{t}}
\newcommand{\sfn}{\mathsf{n}} 
\newcommand{\sfone}{\mathsf{1}}
\newcommand{\sftwo}{\mathsf{2}}
\newcommand{\Gl}{\mathrm{Gl}}
\newcommand{\Aut}{\mathrm{Aut}}
\newcommand{\Inv}{\mathrm{Inv}}
\newcommand{\opp}{\mathrm{opp}}
\newcommand{\im}{\mathrm{im}}
\newcommand{\id}{\mathrm{id}}
\newcommand{\sign}{\mathrm{sign}}
\newcommand{\Cl}{{\mathrm{Cl}}}
\newcommand{\wtimes}{\, \widehat \times}
\newcommand{\msk}{\medskip}
\newcommand{\ssk}{\smallskip}
\newcommand{\nin}{\noindent}
\newcommand{\ul}{\underline}
\newcommand{\ol}{\overline}
\begin{document}

\title[ ]{
Graded sets, graded groups, \\
and Clifford algebras}

\author{Wolfgang Bertram}

\address{Institut \'{E}lie Cartan de Lorraine \\
Universit\'{e} de Lorraine at Nancy, CNRS, INRIA \\
Boulevard des Aiguillettes, B.P. 239 \\
F-54506 Vand\oe{}uvre-l\`{e}s-Nancy, France\\
\url{http://wolfgang.bertram.perso.math.cnrs.fr}
}

\email{\url{wolfgang.bertram@univ-lorraine.fr}}

\subjclass[2010]{
16W50   	
16W55   	
17A70   	
18D25   	
18M15   	
}

\keywords{ Clifford algebra , quaternion group , graded tensor product ,
graded group , central extension , cocycle , super algebra , braiding ,
monoidal category 
}

\begin{abstract}  
We define a general notion of
{\em centrally $\Gamma$-graded sets and groups} and of their
{\em graded products}, and prove some basic results about
the corresponding categories: most importantly, they form
{\em braided monoidal categories}.
Here, $\Gamma$ is an arbitrary (generalized) ring.
The case $\Gamma = \Z / 2 \Z$ is studied in detail: it is
related to Clifford algebras and their discrete Clifford groups
(also called Salingaros Vee groups).
\end{abstract}

\maketitle

\begin{center}
{\sl
Dedicated to the memory of my collegue 
Lionel B\'erard-Bergery
(1945 -- 2019)}
\end{center}

\setcounter{tocdepth}{1}

\section*{Introduction}

Let $(\Gamma,+)$ be an abelian group (our main emphasis will be on the case
$\Gamma = \Z/2 \Z$). Generally,
{\em $\Gamma$-graded} structures play an important r\^ole in  mathematics. 
In this work we study a class of $\Gamma$-gradings, which, when combined with
what we call a {\em central $\Gamma$-action}, has very interesting and pleasant properties.

\subsection{Additively and multiplicatively graded sets}
A first point in the present work is to distinguish between {\em additively}, and {\em multiplicatively} graded
structures:

\begin{definition}\label{def:add-graded}
Let $X$ and $M$ be  sets.  
\begin{enumerate}
\item
An {\em additive $\Gamma$-grading on $M$} is a  {\em disjoint union structure},
written
$$
M = \sqcup_{\gamma \in \Gamma} M_\gamma, \mbox{ or }
M = \coprod_{\gamma \in \Gamma} M_\gamma,
$$
i.e., $M$ is the  {\em coproduct} of a family of sets
$(M_\gamma)_{\gamma \in \Gamma}$.
We allow some of the $M_\gamma$ to be empty.
Thus, an additive grading is equivalent to a 
{\em degree-map},
$$
d : M \to \Gamma , \quad \mbox{ such that } \quad
M_\gamma = d^{-1} (\gamma) .
$$
\item
A {\em multiplicative $\Gamma$-grading on} $X$ is  a
{\em direct product structure} of the form 
$$
X = \times_{\gamma \in \Gamma} X_\gamma, \mbox{ or }
X = \prod_{\gamma \in \Gamma} X_\gamma .
$$
\end{enumerate}
\end{definition}

If $M$ is additively graded, then the space $X = K^M$ of functions from $M$ to a set $K$
is multiplicatively graded
(a function $f:M \to K$ corresponds to the family $(f\vert_{M_\gamma})_{\gamma \in \Gamma}$); 
in particular, taking $K = \{ 0,1\}$, we see that
the power set $\cP(M)$ of $M$ is multiplicatively graded: 
multiplicative gradings often arise as ``exponentials'' of additive ones.  This furnishes important links 
between both types of gradings. 

\subsection{Graded groups, and Clifford groups}
In this work, we focus on {\em additively} graded sets, and in particular on 
{\em (additively) graded groups or monoids}:

\begin{definition}\label{def:graded-monoid} 
A {\em $\Gamma$-graded monoid} is a monoid $G$ which is additively $\Gamma$-graded,
such that the grading map $d:G \to (\Gamma,+)$ is a morphism of monoids.
It is called {\em centrally graded} given
a morphism
$Z :(\Gamma,+) \to G$, $x \mapsto Z(x) = :Z^x$ such that:
\begin{enumerate}
\item
 $Z$ takes
values in the center
of $G$: for all $x \in \Gamma$ and $g\in G$ : $Z^x g = g Z^x$,
\item
  $d \circ Z = 0$, i.e., 
$\forall x \in \Gamma$ : $d(Z^x)= 0$.
\end{enumerate}
\end{definition}

When $\Gamma = \Z/2 \Z = \{ \ol 0,\ol 1\}$, then the element $Z=Z(\ol 1)$ is called the
{\em grading element}, and the power $Z^x$ is the same as $Z(x)$. 
A prototype of graded group $G$ then is
what in certain texts
(e.g., \cite{L}) is called the {\em (discrete) Clifford group},
and elsewhere  {\em Salingaros Vee-groups}, (cf.\
\cite{S, S82, A}):  
the canonical basis elements
$e_1,\ldots,e_n$ of the Clifford algebra $\Cl_{p,q}(\K)$ (see Appendix \ref{app:Clifford} for notation) 
 generate
a group $G=Q_{p,q}$ of invertible elements in the Clifford algebra.
This group is $\Gamma$-graded in the sense defined above:
the grading morphism $d$ is the unique morphism $d:Q_{p,q} \to \Z/2 \Z$ 
such that $d(e_i) = \ol 1$ (odd) for all $i$,
and the grading element is $Z = - 1$ (since $\langle e_i,e_j \rangle = 0$ for $i \not= j$, we have 
$e_i e_j = - e_j e_i$, so the group $G$ contains the element
$ -1$, which clearly is central in $G$).
The underlying philosophy is to replace the central element
$-1$ from the Clifford case by an ``abstract central element''.
Thus the approach presented here will give an abstract, group-theoretic  presentation
of the discrete Clifford group. 
Such a group-theoretic approach has already been advocated by the above
quoted authors, but here we will go much further by putting it into a general
categorical approach, as follows. 

\subsection{The braided monoidal category of graded sets}
A key feature in the theory of Clifford algebras is the relation 
(see, e.g., \cite{BtD}, Prop.\ I.6.6)
 \begin{equation}\label{eqn:Clifford-tensorproduct}
 \Cl(U \oplus W) = \Cl(U) \, \widehat \otimes \,  \Cl(W)
 \end{equation}
 expressing the Clifford algebra of a quadratic space $U \oplus W$ by the
 {\em graded tensor product} of the corresponding Clifford algebras. When $U,W$
 carry the zero quadratic form, we get the corresponding rules for
 {\em Grassmann algebras}. 
 Our approach is designed to furnish an analog of this for general (additvely)
 {\em centrally $\Gamma$-graded sets}:

\begin{definition}\label{def:centrally-graded-set}
A $\Gamma$-graded set, with grading map $d:M\to \Gamma$, is called {\em centrally
$\Gamma$-graded} if it is equipped with a (left) $\Gamma$-action
$$
z : \Gamma \times M \to M, \quad (x,m) \mapsto z(x,m) = x.m
$$
preserving the grading, i.e.,
such that $\forall x \in \Gamma, \forall m \in M$ : $d(x.m)=d(m)$.
\end{definition}

A centrally graded monoid $G$ is a centrally graded set: just let
$z(x,m) = Z^x m$.
First of all, we define the graded product of sets:

\begin{theorem}\label{th:monoidal-category}
Assume $d_i : M_i \to \Gamma$, $i=1,2$, are two centrally $\Gamma$-graded sets.
Let $M$ be the quotient of $M_1 \times M_2$ for the equivalence relation defined by
$\forall x \in \Gamma$ : $(x.m_1,m_2) \sim (m_1,x.m_2)$. 
Then $M$, together with grading map and $\Gamma$-action
$$
d([m_1,m_2]) = d(m_1) + d(m_2), \quad x . [m_1,m_2] = [x.m_1,m_2]=[m_1,x.m_2],
$$
is again a centrally $\Gamma$-graded set, which we denote by
$M = M_1  \times_z M_2$.  The usual
set-theoretic identification
$(M_1 \times M_2) \times M_3 = M_1 \times (M_2 \times M_3)$ induces
a natural isomorphism of centrally $\Gamma$-graded sets
$$
(M_1 \times_z M_2) \times_z M_3 = M_1 \times_z (M_2 \times_z M_3).
$$
The set $E=\Gamma$ with $d(m)=0$, $x.m = x+m$, is a ``unit'' :
$E \widehat \times_z M = M = M  \times_z E$.
Summing up, centrally $\Gamma$-graded sets together with 
$\times_z$ form a {\em monoidal category}.
\end{theorem}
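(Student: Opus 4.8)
The plan is to present $M_1\times_z M_2$ as an orbit space and let that description do the work. First I would observe that the relation generated by $(x.m_1,m_2)\sim(m_1,x.m_2)$ is \emph{already} an equivalence relation: it is precisely the orbit relation of the $\Gamma$-action on $M_1\times M_2$ defined by $y\cdot(m_1,m_2)=(y.m_1,(-y).m_2)$ (this is a genuine action because $\Gamma$ is an abelian group, so the two factor actions commute and $(x+y)\cdot$ decomposes correctly). Thus $M=M_1\times_z M_2=(M_1\times M_2)/\Gamma$. Well-definedness of $d([m_1,m_2])=d(m_1)+d(m_2)$ is then immediate from $d_i(y.m)=d_i(m)$; well-definedness of $x.[m_1,m_2]:=[x.m_1,m_2]$ follows since replacing $(m_1,m_2)$ by $(y.m_1,(-y).m_2)$ replaces $(x.m_1,m_2)$ by $((x+y).m_1,(-y).m_2)=y\cdot(x.m_1,m_2)$; and the identity $[x.m_1,m_2]=[m_1,x.m_2]$ is exactly the defining relation. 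That $x\mapsto x.(-)$ is a $\Gamma$-action preserving the grading is then a one-line verification. This settles the first assertion of the theorem.

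For associativity I would introduce, for each $n\ge 1$, the \emph{$n$-fold balanced product} $(M_1\times\cdots\times M_n)/\Gamma_0^{(n)}$, where $\Gamma_0^{(n)}:=\{(a_1,\dots,a_n)\in\Gamma^n\mid a_1+\cdots+a_n=0\}$ acts coordinatewise, endowed with $d=d_1+\cdots+d_n$ and the $\Gamma$-action through the first coordinate. A direct computation of orbits shows that both $(M_1\times_z M_2)\times_z M_3$ and $M_1\times_z(M_2\times_z M_3)$ are canonically identified, as centrally $\Gamma$-graded sets, with the threefold balanced product: in the first case the equivalence on triples of representatives is generated by the elements $(y,-y,0)$ (inner quotient) and $(x,0,-x)$ (outer quotient), in the second by $(0,z,-z)$ and $(w,-w,0)$, and in either case these elements ($x,y,z,w\in\Gamma$) generate exactly $\Gamma_0^{(3)}$. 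The associator $\alpha_{M_1,M_2,M_3}$ is the composite of these two identifications; on underlying sets it is induced by the set-theoretic identity $(M_1\times M_2)\times M_3=M_1\times(M_2\times M_3)$, which is the content of the displayed isomorphism in the statement.

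Next I would treat the unit. From the defining relation one gets $[e,m]=[0,e.m]$ in $E\times_z M$ for all $e\in\Gamma$, and the orbit of $(0,m)$ under $\Gamma$ is $\{(a,(-a).m)\mid a\in\Gamma\}$, so $[0,m]=[0,m']$ forces $m=m'$; hence $m\mapsto[0,m]$ is a bijection $M\to E\times_z M$ with inverse $[e,m]\mapsto e.m$, and it visibly commutes with $d$ and the $\Gamma$-action, giving the left unitor $\lambda_M$. The right unitor $\rho_M:M\times_z E\to M$, $[m,e]\mapsto e.m$, is obtained by the symmetric argument.

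It remains to assemble this into a monoidal category. I would first note that morphisms of centrally $\Gamma$-graded sets are the maps commuting with $d$ and with the $\Gamma$-action (the evident notion), that $\times_z$ is a bifunctor via $f_1\times_z f_2:[m_1,m_2]\mapsto[f_1(m_1),f_2(m_2)]$ (well defined because the $f_i$ are equivariant), and that $\alpha$, $\lambda$, $\rho$ are natural — in each case because all these maps are induced by maps of the underlying (plain) products and morphisms act coordinatewise, so the naturality squares commute on the nose. The pentagon and triangle coherence axioms then reduce to triviality: passing to the balanced-product description, for four objects every associator becomes the identity map of $(M_1\times M_2\times M_3\times M_4)/\Gamma_0^{(4)}$, so both legs of the pentagon are that identity, and the triangle likewise becomes an equality of maps built from identities and the action map $(e,m)\mapsto e.m$. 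I expect the only real work to lie in the orbit bookkeeping of the associativity step; introducing the $n$-fold balanced product as a common refinement is precisely what makes both that step and the coherence axioms collapse, so there is no genuine obstacle beyond setting up that description carefully.
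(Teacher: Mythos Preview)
Your proof is correct and follows essentially the same route as the paper: both identify the equivalence relation with the orbit relation of the antidiagonal $\Gamma$-action, and both reduce associativity to showing that either bracketing yields the quotient of $M_1\times M_2\times M_3$ by the ``generalized antidiagonal'' $\Gamma_0^{(3)}=\{(\gamma_1,\gamma_2,\gamma_3)\mid \gamma_1+\gamma_2+\gamma_3=0\}$. You carry out the unit and coherence verifications in more detail than the paper (which leaves them as direct checks), but the underlying strategy is identical.
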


Now, the crucial point about graded tensor products is that they are associative, but {\em not
commutative}: this is encoded by  the
{\em braiding map} of graded tensor products, defined for homogeneous elements $v,w$
of degree $\vert v \vert, \vert w \vert$ by
\begin{equation}\label{eqn:signrule}
\beta : 
V \widehat\otimes W \to W \widehat\otimes V , \quad
v \otimes w \mapsto (-1)^{\vert v \vert \cdot \vert w \vert} w \otimes v.
\end{equation}
Likewise, in our setting, braidings can be chosen as additional structure: first of all,
there is the ``usual'', or ``standard'' braiding, $(x_1,x_2)\mapsto (x_2,x_1)$,
which leads us back to usual set-theory.
But now assume that $\Gamma$ is also a {\em generalized ring}, meaning that
it carries a bi-additive ``product'' map $\Gamma^2 \to \Gamma$, $(a,b) \mapsto ab$.

\begin{definition} \label{def:braiding}
Let $M_i$, $i=1,2$, be two centrally $\Gamma$-graded sets. Then the map
$$
\beta = \beta_{M_1,M_2} : M_1 \times_z M_2 \to M_2 \times_z M_1,\quad
[x_1,x_2] \mapsto z\bigl(  d(x_2) \cdot d(x_1) ,  [x_2,x_1]  \bigr)
$$
is well-defined, called the {\em braiding} given by the generalized ring $\Gamma$.
\end{definition}

\begin{theorem}\label{th:braiding}
For any generalized ring, the braiding maps are isomorphisms of centrally graded sets and
define a structure of 
{\em (strict) braided monoidal category} of centrally $\Gamma$-graded sets.
If the generalized ring product is {\em skew}, meaning that
$ab+ba = 0$ for all $a,b \in \Gamma$, then this structure is {\em symmetric},
i.e., we always have
$$
\beta_{M_1,M_2}^{-1} = \beta_{M_2,M_1}.
$$
\end{theorem}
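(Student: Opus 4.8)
The plan is to verify the axioms of a braided monoidal category (in Joyal–Street form) for the data assembled so far: the monoidal structure $(\times_z, E)$ from Theorem 1.3 and the family $\beta_{M_1,M_2}$ from Definition 1.4. First I would check that each $\beta_{M_1,M_2}$ is a well-defined morphism of centrally $\Gamma$-graded sets; well-definedness on the quotient was asserted in Definition 1.4, so what remains is that $\beta$ commutes with the grading map $d$ and with the $\Gamma$-action $z$. For the grading: $d(\beta[x_1,x_2]) = d(z(d(x_2)d(x_1),[x_2,x_1])) = d([x_2,x_1]) = d(x_2)+d(x_1) = d([x_1,x_2])$, using that $z$ preserves degrees and that $\Gamma$ is abelian. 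Equivariance under the action is the identity $\beta(y.[x_1,x_2]) = y.\beta[x_1,x_2]$, which reduces to commutativity of the $\Gamma$-action with itself; this holds because $z$ is an action of the abelian group $(\Gamma,+)$. Invertibility: the candidate inverse is $[x_2,x_1] \mapsto z(-d(x_2)d(x_1),[x_1,x_2])$, i.e. $z(-d(x_1)d(x_2), \cdot) \circ (\text{swap})$ built from $\beta_{M_2,M_1}$; composing and using additivity of $z$ in the first slot shows the two composites are the identity, so $\beta_{M_1,M_2}$ is an isomorphism.

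Next I would verify the two hexagon identities. Writing out $\beta_{M_1 \times_z M_2, M_3}$ on a class $[[x_1,x_2],x_3]$ versus the composite $(\beta_{M_1,M_3} \times \id) \circ (\id \times \beta_{M_2,M_3})$ (suppressing the strict associators, which are identities by Theorem 1.3), both sides send the element to $[[x_3,x_1],x_2]$ decorated by a central factor; on the left the factor is $z(d(x_3)(d(x_1)+d(x_2)),\cdot)$, on the right it is the composite of $z(d(x_3)d(x_2),\cdot)$ and $z(d(x_3)d(x_1),\cdot)$, and these agree because the product on $\Gamma$ is \emph{bi-additive} (distributes over $+$) and $z$ is additive in its first argument. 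The second hexagon is symmetric, using additivity in the other variable. Here the only subtlety is bookkeeping: one must be careful that the central corrections produced at each stage are moved across the bracket freely, which is exactly the content of centrality of the $\Gamma$-action (Definition 1.3/1.2) together with the relation $[x.m_1,m_2]=[m_1,x.m_2]$ defining $\times_z$. I also need naturality of $\beta$ in both arguments, i.e. that $\beta$ is a natural transformation between the two functors $M_1,M_2 \mapsto M_1 \times_z M_2$ and $\mapsto M_2 \times_z M_1$; this is immediate since morphisms of centrally graded sets preserve $d$ and $z$, and $\beta$ is built solely from $d$ and $z$.

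Finally, for the symmetry claim: $\beta_{M_2,M_1} \circ \beta_{M_1,M_2}$ sends $[x_1,x_2]$ to $z(d(x_2)d(x_1) + d(x_1)d(x_2),[x_1,x_2])$, using additivity of $z$ to collect the two central factors. This equals $[x_1,x_2]$ for all choices precisely when $ab+ba=0$ in $\Gamma$ for all $a,b$ (applied to $a=d(x_1)$, $b=d(x_2)$), i.e. when the generalized ring product is skew; and then $\beta_{M_1,M_2}^{-1} = \beta_{M_2,M_1}$ as claimed. I expect the main obstacle to be purely organizational rather than mathematical: keeping track of which central factor $z(\cdot,\cdot)$ is introduced at which step in the hexagons and confirming it can be slid through the brackets, so that the degree-bilinearity of the $\Gamma$-product lines up exactly with the two hexagon axioms. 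No deep idea is needed beyond bi-additivity of the product, additivity of $z$ in its group variable, and centrality; the verification is a careful but routine diagram chase.
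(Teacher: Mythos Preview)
Your proposal is correct and follows essentially the same route as the paper: verify that $\beta$ is a grading- and action-preserving bijection, check the two hexagon identities by reducing them to left/right distributivity of the bi-additive product on $\Gamma$, and deduce symmetry from $ab+ba=0$. One small slip to fix when you write it up: in your ``i.e.''\ rephrasing of the inverse, the central factor should be $z(-d(x_2)d(x_1),\cdot)$ (matching your own displayed formula), not $z(-d(x_1)d(x_2),\cdot)$, and the inverse coincides with $\beta_{M_2,M_1}$ only in the skew case, so ``built from $\beta_{M_2,M_1}$'' is a bit loose.
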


In the symmetric case, there is a simple formula, in terms of {\em permutation inversions},
 for the action of the symmetric group on
iterated products (Theorem \ref{th:braided2}). 
Next, braiding maps are used to define {\em graded products of graded groups}:

\begin{theorem}\label{th:graded-group}
We fix a generalized ring structure on $\Gamma$.
\begin{enumerate}
\item
For every centrally $\Gamma$-graded group $G$ there is a {\em braided dual group ($b$-dual)} 
given by $G^\vee = G$, with same grading and central action as $G$, and product
$$
x \bullet y = Z^{d(y) d(x)} \, xy.
$$
\item
There is a group structure,  denoted
 by $G_1 \widehat \times_Z G_2$, on the product
$G_1 \times_z G_2$ of two centrally $\Gamma$-graded groups $G_i$, $i=1,2$, given by the group law
$$
[g_1,g_2]\cdot [h_1,h_2] = Z^{d(h_1) d(g_2)} [g_1h_1,g_2 h_2] .
$$
\item
Iteration of the preceding construction is associative:  the natural map

$(G_1 \widehat \times_Z G_2)\widehat \times_Z G_3 \cong 
 G_1 \widehat \times_Z (G_2\widehat \times_Z G_3)  $
is a group isomorphism.
\item
When $\Gamma$ is skew, then the braiding map $\beta_{G_1,G_2}$ is an isomorphism from
$G_1 \widehat \times_Z G_2$ onto 
$G_2 \widehat \times_Z G_1$.
\end{enumerate}
\end{theorem}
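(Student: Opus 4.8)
The plan is to verify (1) and (2) by direct computation, using only that $Z\colon(\Gamma,+)\to G$ is a central monoid morphism with $d\circ Z=0$ and that the generalized ring product on $\Gamma$ is bi-additive, and then to deduce (3) and (4) from (2) together with Theorems \ref{th:monoidal-category} and \ref{th:braiding}, which already supply the relevant isomorphisms of the underlying centrally graded sets, so that only compatibility with the new group laws remains. For (1): since $d\circ Z=0$, the map $d$ is still a grading morphism for $\bullet$ (because $d(x\bullet y)=d(x)+d(y)$) and each $Z^a$ remains central for $\bullet$ and of degree $0$, so $G^\vee$ carries a central grading; expanding $(x\bullet y)\bullet z$ and $x\bullet(y\bullet z)$ and using $d(x\bullet y)=d(x)+d(y)$, associativity reduces to the identity
\[
d(z)d(x)+d(z)d(y)+d(y)d(x)=d(y)d(x)+d(z)d(x)+d(z)d(y)
\]
in $(\Gamma,+)$, which is the same sum written twice; the identity of $(G^\vee,\bullet)$ is the identity $e$ of $G$ (using bi-additivity, $0\cdot\gamma=\gamma\cdot 0=0$), and the $\bullet$-inverse of $x$ is $Z^{d(x)^2}x^{-1}$, as a short computation using $d(x^{-1})=-d(x)$ shows.

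For (2) I would first check that the group law is well defined. A class $[g_1,g_2]\in G_1\times_z G_2$ is represented by any pair $(Z^{-x}g_1,Z^{x}g_2)$, $x\in\Gamma$; since $d(Z^{\pm x}g_i)=d(g_i)$ the cocycle exponent $d(h_1)d(g_2)$ is unchanged under such changes of representatives, and $(Z^{-x-y}g_1h_1,\,Z^{x+y}g_2h_2)$ is equivalent to $(g_1h_1,g_2h_2)$, so $[g_1,g_2]\cdot[h_1,h_2]$ is independent of all choices. One checks routinely that the grading and central action of Theorem \ref{th:monoidal-category} make $G_1\widehat\times_Z G_2$ into a centrally graded group. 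The manipulation used throughout is that a central factor passes through a product, $Z^{c}[a_1,a_2]\cdot[b_1,b_2]=Z^{c}\bigl([a_1,a_2]\cdot[b_1,b_2]\bigr)=[a_1,a_2]\cdot Z^{c}[b_1,b_2]$, valid because $Z^{c}$ is central of degree $0$; granting this, associativity again collapses, after clearing central factors, to a bi-additive identity among degrees, the unit is $[e_1,e_2]$, and the inverse of $[g_1,g_2]$ is $[\,g_1^{-1},\,Z^{d(g_1)d(g_2)}g_2^{-1}\,]$.

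For (3), Theorem \ref{th:monoidal-category} identifies the centrally graded sets underlying $(G_1\widehat\times_Z G_2)\widehat\times_Z G_3$ and $G_1\widehat\times_Z(G_2\widehat\times_Z G_3)$ via $[[g_1,g_2],g_3]\leftrightarrow[g_1,[g_2,g_3]]$, so it suffices to see this map is multiplicative. Expanding $[[g_1,g_2],g_3]\cdot[[h_1,h_2],h_3]$ by the law of (2) yields the central factor $Z^{\,d(h_1)d(g_2)+(d(h_1)+d(h_2))d(g_3)}$ in front of $[[g_1h_1,g_2h_2],g_3h_3]$, while the other grouping yields $Z^{\,d(h_1)(d(g_2)+d(g_3))+d(h_2)d(g_3)}$ in front of $[g_1h_1,[g_2h_2,g_3h_3]]$; bi-additivity makes the two exponents agree, and the bracketed parts correspond under the identification. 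For (4), assume $\Gamma$ is skew; by Theorem \ref{th:braiding} the braiding $\beta_{G_1,G_2}$ is already an isomorphism of centrally graded sets, so only multiplicativity is at stake. Using $\beta_{G_1,G_2}([x_1,x_2])=Z^{d(x_2)d(x_1)}[x_2,x_1]$ and the pass-through rule, $\beta_{G_1,G_2}\bigl([g_1,g_2]\cdot[h_1,h_2]\bigr)$ acquires the exponent $d(h_1)d(g_2)+(d(g_2)+d(h_2))(d(g_1)+d(h_1))$, whereas $\beta_{G_1,G_2}([g_1,g_2])\cdot\beta_{G_1,G_2}([h_1,h_2])$, computed in $G_2\widehat\times_Z G_1$, acquires $d(g_2)d(g_1)+d(h_2)d(h_1)+d(h_2)d(g_1)$; the difference is exactly $d(h_1)d(g_2)+d(g_2)d(h_1)$, which vanishes precisely because $\Gamma$ is skew, so $\beta_{G_1,G_2}$ is a group isomorphism.

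The only step requiring genuine care is (2): setting up the equivalence-relation bookkeeping correctly (which pairs represent the same class, and checking the cocycle is insensitive to the permitted moves) and being systematic about extracting central $Z$-factors from products. Once that is in place, every associativity or compatibility assertion reduces to an identity among degrees in $(\Gamma,+)$ that follows from bi-additivity alone; in particular neither associativity nor unitality of the generalized ring product is ever used, which is why these constructions make sense for arbitrary generalized rings, and why in (4) skewness is exactly the condition making the braiding multiplicative.
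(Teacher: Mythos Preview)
Your proposal is correct and follows essentially the same route as the paper: each item is handled by expanding the definitions and reducing the group axioms (associativity, inverses) or compatibility conditions (with the associator in (3), with the braiding in (4)) to bi-additive identities among degrees in $(\Gamma,+)$. The only organizational differences are cosmetic: for (2) the paper first defines the product on $G_1\times G_2$, then observes that the antidiagonal is a central normal subgroup and passes to the quotient, whereas you work directly on classes and verify well-definedness; and the paper records the inverse as $(Z^{d(g_2)d(g_1)}g_1^{-1},g_2^{-1})$, which represents the same class as your $[g_1^{-1},Z^{d(g_1)d(g_2)}g_2^{-1}]$ once one is consistent about the order of factors in the cocycle exponent.
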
 

Dually to the result mentioned above, there is also a simple formula for the product of $n$ elements
in $G_1 \widehat \times_Z G_2$ (Remark \ref{rk:assoc-2}) and in $G^\vee$ (Remark \ref{rk:assoc-0}).

\subsection{The discrete Clifford category}
Turning to concrete examples,
we have already mentioned that (discrete) Clifford groups provide non-trivial examples illustrating
the preceding facts.
More abstractly, given a generalized ring $\Gamma$ (which for simplicity we assume to be
skew), we define a sequence of groups: let $Q_0 := (\Gamma, +)$, 
$$
Q_{1,0} := \Gamma^2=  \Gamma \times \Gamma , \quad
Q_{0,1} := (Q_{1,0})^\vee , \ldots \quad 
Q_{p,q}: = \widehat \times_z^p Q_{1,0} \widehat \times_z (\widehat \times_z^q Q_{0,1} ),
$$
where $\Gamma^2$ is an abelian group, but with non-trivial grading and action
(see Example \ref{ex:central}).
When $\Gamma = \Z / 2 \Z$, then this sequence is exactly the sequence of 
discrete Clifford groups. 
In this case, the first few groups are 

\msk
$Q_0 = (\Gamma,+) = C_2 = \Z / 2 \Z$,

$Q_{1,0} = V = C_2 \times C_2$ (the Klein four-group),

$Q_{0,1} = C = C_4$ (the cyclic group of order $4$),

$Q_{2,0} = Q_{1,0} \wtimes_Z Q_{1,0} = D =D_4$ (the dihedral group of
symmetries of the square),

$Q_{0,2} = Q_{0,1} \wtimes_Z Q_{0,1} = Q$ (the quaternion group of 
order $8$).,

$Q_{1,1} \cong D_4$ (but with a different grading than above).

\msk
\nin
We investigate this {\em discrete Clifford category} in some detail in Section \ref{sec:Cliffordgroups}.
Most facts are, essentially, known from the theory of Clifford algebras: Salingaros denotes these
groups by $G_{p,q}$ and calls them {\em Vee-groups} (see \cite{S, S82}, see also references given in
\cite{A, AM}). 
In the present work, we push the group theoretic approch given by these authors much further,
by putting it in the large framework of graded monoidal categories, allowing much more
general rings than $\Z/2\Z$.  
For instance,
when $\Gamma = \R^2$ with its canonical skew-symmetric product
$x \cdot y = (x_1 y_2 - x_2 y_1, 0)$, 
we get 
a sequence of groups of Heisenberg-type, called the {\em Heisenberg category}.

\subsection{Group algebra, and Clifford algebra}
The group algebra $\K[G]$ of the groups $G = Q_{p,q}$ is a super-algebra which 
 decomposes as a direct sum
of two ideals $\K[G]^+$ (which in turn is a group algebra of an abelian group), and
$\K[G]^-$, which is precisely the Clifford algebra (Theorem \ref{th:Cl} and \ref{th:Super}).
More generally, when $G = G_1 \widehat \times_Z G_2$, then $\K[G]^+$ is the ordinary
(ungraded) tensor product $\K[G_1]\otimes \K[G_2]$ of group algebas, and
$\K[G]^-$ is the {\em graded} tensor product (Theorem \ref{th:Z}).

  \subsection{Concluding remarks}
 It seems as if the category of centrally $\Z/2\Z$-graded sets supports a good deal of
 what can be done on ordinary, ungraded sets. One may ask if this carries
 as far as differential calculus and analysis -- indeed, the joint  paper with J.\ Haut,
\cite{BeH} proposes a setting of ``categorical differential calculus'' which seems to
be suitable for an adaptation to a ``graded'', or ``super'',  framework. 
The important challenge is to understand if, and how, such a calculus were related to
other known categorical approaches (\cite{Mol, Sa, Sch}).
We hope to investigate this question in future work.

Another aspect, already present in, e.g.,  \cite{A, AM, L,S}, 
becomes clearified in the present setting, namely, the relation with the
theory of {\em central extensions of groups}: indeed, 
all our groups can be viewed as central extensions, and as such, 
can be described by certain {\em cocycles} 
(Section \ref{sec:cocyle}).
It becomes apparent that these cocycles are all related to algebra and combinatorics
of {\em totally ordered (finite) index sets}, in particular, via   {\em inversions} (\href{https://de.wikipedia.org/wiki/Fehlstand}{\em Fehlstand}, in German). 
$$
***
$$

\bigskip
{\em
Je souhaite d\'edier ce travail \`a la m\'emoire de mon coll\`egue Lionel B\'erard-Bergery
(1945 -- 2019).
Les discussions que nous avions, autour de sujets de g\'eom\'etrie, o\`u  les graduations 
\'etaient toujours pr\'esentes, 
m'ont \'et\'e une source in\'epuisable d'inspiration dans ce domaine. 
Bien que Lionel n'ait pas eu le temps  de formaliser et de finaliser ces id\'ees, j'esp\`ere qu'elles
verront le jour, t\^ot ou tard, et que ce travail y contribuera.}

\section{ $\Gamma$-graded sets and monoids}\label{sec:1}


In this section, $(\Gamma,+)$ is an abelian group.

\subsection{Additively and multiplicatively graded sets, and grading auto\-mor\-phism}
The definition of {\em (additively) $\Gamma$-graded sets and monoids} has been given above
(Def.\ \ref{def:add-graded} and \ref{def:graded-monoid}).
As explained in the introduction, we'll focus on {\em additively} graded structures, and simply
call them ``graded''. Let's turn them into a category.
We have the choice between a
 ``strong'' or ``weak'' definition of morphisms 
(preserving, or permuting, the grading):

\begin{definition}\label{def:graded-morphism}
Let $X,X'$ be sets and $(\Gamma,+)$ an abelian group.
A {\em morphism of additively $\Gamma$-graded sets $(X,d),(X',d')$, of degree $d(f) \in \Gamma$},
 is a map $f:X \to X'$ such that
$$
\forall x \in X: \qquad
d(f(x))=  d(f) + d(x) .
$$
When $d(f)=0$, we say that
$f$ is a {\em grading-preserving morphism}.

A {\em morphism of centrally $\Gamma$-graded sets} is a grading-preserving
morphism  commuting
with the $\Gamma$-action ($f(\gamma.x)= \gamma.f(x)$, for all
$x\in M$ and $\gamma \in \Gamma$).

A {\em morphism of centrally $\Gamma$-graded monoids or groups} is
a morphism of groups or monoids satisfying the preceding condition.
\end{definition}

\begin{definition}\label{def:alpha}
On every centrally $\Gamma$-graded set $(M,d,z)$, we define
the {\em grading automorphism} by
$$
\alpha:M\to M, \quad x \mapsto \alpha(x) = d(x).x = z(d(x),x) .
$$
\end{definition}

\nin
Clearly, $\alpha$ is a grading-preserving morphism.
 Its inverse is $\alpha^{-1}(x)=z(-d(x) ,x)$.

\begin{definition}\label{def:alpha2}
We say that $(M,-d)$ is the {\em negative grading} of $(M,d)$.
\end{definition}

 \begin{lemma}\label{la:grading-auto}
 If $(G,d,Z)$ is a centrally $\Gamma$-graded group, then
 $\alpha:G\to G$, $g \mapsto Z^{d(g)} g$ is an automorphism of graded groups
 (again called {\em grading automorphism}).
 \end{lemma}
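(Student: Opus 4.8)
The plan is to verify directly that $\alpha(g)=Z^{d(g)}g$ is a bijective monoid homomorphism that is grading-preserving and commutes with the central $\Gamma$-action. The bijectivity and the grading-preserving property are already recorded in the remark following Definition~\ref{def:alpha} (indeed $\alpha$ is the grading automorphism of the underlying centrally $\Gamma$-graded set $(G,d,Z)$, with inverse $g\mapsto Z^{-d(g)}g$, and $d(\alpha(g))=d(Z^{d(g)}g)=d(Z^{d(g)})+d(g)=0+d(g)=d(g)$ by condition (2) in Definition~\ref{def:graded-monoid}). Likewise, commuting with the $\Gamma$-action is immediate from centrality of $Z$: $\alpha(x.g)=\alpha(Z^x g)=Z^{d(Z^x g)}Z^x g=Z^{d(g)}Z^x g=Z^x(Z^{d(g)}g)=x.\alpha(g)$. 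So the only substantive point is multiplicativity.

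The key computation is the following. For $g,h\in G$, since $d$ is a monoid morphism, $d(gh)=d(g)+d(h)$, and since $Z:(\Gamma,+)\to G$ is a morphism, $Z^{d(gh)}=Z^{d(g)+d(h)}=Z^{d(g)}Z^{d(h)}$. Hence
\[
\alpha(gh)=Z^{d(gh)}\,gh=Z^{d(g)}Z^{d(h)}\,g\,h.
\]
Now I use that $Z^{d(h)}$ is central in $G$ (condition (1) of Definition~\ref{def:graded-monoid}) to commute it past $g$:
\[
Z^{d(g)}Z^{d(h)}\,g\,h=Z^{d(g)}\,g\,Z^{d(h)}\,h=\alpha(g)\,\alpha(h).
\]
This gives $\alpha(gh)=\alpha(g)\alpha(h)$. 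One should also check $\alpha(1)=Z^{d(1)}1=Z^{0}1=1$ (using that $Z$ and $d$ send the identity to the identity / to $0$), so $\alpha$ is a monoid morphism, and combined with bijectivity it is an automorphism of the underlying group. Together with the grading-preserving and $\Gamma$-equivariance properties noted above, $\alpha$ is a morphism — hence an isomorphism — of centrally $\Gamma$-graded groups in the sense of Definition~\ref{def:graded-morphism}.

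There is essentially no obstacle here: the proof is a two-line manipulation, and the only thing to be careful about is invoking centrality of the specific element $Z^{d(h)}$ (not of all of $G$) at exactly the right place, together with the two morphism properties of $d$ and $Z$. If one wanted to be more economical, one could also deduce the group-automorphism claim from the set-level statement by simply observing that the extra content "$\alpha$ respects the product" is precisely the displayed computation; but writing it out as above is cleanest.
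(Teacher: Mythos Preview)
Your proof is correct and takes essentially the same approach as the paper: the paper's own proof is precisely the one-line computation $\alpha(gh) = Z^{d(gh)} gh = Z^{d(g)} Z^{d(h)} gh = Z^{d(g)} g Z^{d(h)} h = \alpha(g)\alpha(h)$ together with $\alpha(e)=e$, which is exactly your central displayed calculation. You have added some surrounding verification (grading-preservation, $\Gamma$-equivariance, bijectivity) that the paper leaves implicit, but the substantive argument is identical.
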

 
 \begin{proof} 
 $\alpha(gh) = Z^{d(gh)} gh = Z^{d(g)} Z^{dh} gh = Z^{d(g)} g Z^{d(h)} h = \alpha(g) \alpha(h)$,
 $\alpha(e)=e$.
 \end{proof}
 
 When $Z:\Gamma \to G$ is injective, then the grading can be recovered from 
 $\alpha$ via $G_\gamma = \{ x \in G \mid \forall  u \in \Gamma : u x = \gamma x \}$
 (a kind of ``pre-eigenspace''). 
 As already noticed, examples of centrally $\Gamma$-graded sets arise from centrally
 graded groups:

\begin{example}\label{ex:P(n)}
Let
 $M: = \cP(\sfn)$ the power set of $\sfn = \{ 1,\ldots , n \}$.
 It is an abelian group for the symmetric difference $A \Delta B$ of sets, and from elementary
 set theory we have a group
 morphism
 $$
 d : \cP(\sfn) \to \Gamma:=\Z/2 \Z, \quad A \mapsto \vert A \vert \mod 2 .
 $$
So $M_0 =\cP(\sfn)_0$ is the subset of sets having even cardinality, and
$M_1=\cP(\sfn)_1$ the one of subsets having odd cardinality. 
A central action of $\Z/2\Z$ on $\cP(\sfn)$ is obtained by choosing some set
$B \subset \sfn$ of even cardinality and letting
$z(\ol 0 ,A) = A$, $z(\ol 1,A,) = A \Delta B$,.
We have
 $\alpha(A) = A$ if $A$ is even, and
$\alpha(A) = A \Delta B$ if $A$ is odd. 
\end{example}

\begin{example}
All dihedral groups $D_n$ are $\Z/2\Z$-graded, via the quotient morphism
$D_n \to D_n / C_n = C_2$ (where $C_n$ is the cyclic group of order $n$).
But only for $n$ even they are {\em centrally} $\Z/2\Z$-graded: only in this case
the center is non-trivial, $Z(D_{2n})= \{ \pm 1 \} \cong C_2$, so that $Z : \Z / 2 \Z \to D_{2n}$
can be taken to be $Z(\ol 0) = 1, Z(\ol 1) = -1$.
Note that the groups $D_{4n}$ admit several normal subgroups of index $2$, containing the element
$-1$, so there
are several different central gradings on $D_{4n}$.
\end{example} 

\begin{example}
The quaternion group $Q$ is centrally $\Z/2\Z$-graded: similarly to the case of $D_4$,
 its center $\pm 1$ is isomorphic to $C_2$, and it has several normal
subgroups of index $2$ containing the element $-1$.  
In contrast to the $D_4$-case, the different normal subgroups in question are 
conjugate to each other under outer automorphisms.
\end{example}

\begin{remark}[Central extensions]\label{rk:central-extension}
The quotient (orbit set) of the $\Gamma$-action,
$M^0:= M / \Gamma $, 
is again $\Gamma$-graded: the map $d$ passes to the
quotient $\overline d : M^0 \to \Gamma$.
If the action of $\Gamma$ is free, it will sometimes be useful to choose
a set-theoretic section of the projection $M \to M^0$, and thus
to identify $M$ with $M^0 \times \Gamma$.
In particular,  the setting of centrally graded groups self-dual in the sense that, reversing
arrows,  $d$ and $Z$   change their roles, and
we have exact sequences of groups
$$
\begin{matrix}
G_0 & \to & G & \to & \Gamma, \\
\Gamma & \to & G & \to & G^0:=G/Z^\Gamma ,
\end{matrix}
$$
the second of them realizing $G$ as {\em central extension} of
$G^0$ by $\Gamma$. Choosing a set-map $G^0 \to G$, section to the
projection $G \to G^0$, the set $G$
can be identified with  $G^0 \times \Gamma$, with group law 
expressed via a map $\tau:G^0 \times G^0 \to \Gamma$
(with $G^0$ written additively),
$$
(h,u) \cdot (h',u') = (hh', u + u' + \tau(h,h')) .
$$
Associativity is equivalent to saying that
 $\tau$ satisfies the {\em cocycle relation}
$$
\tau(h,h') + \tau(hh',h'') = \tau(h,h'h'') + \tau(h',h'').
$$
 (cf.\ Section \ref{sec:cocyle}).
Note that $d$ induces a morphism $d:G^0 \to \Gamma$,
but the induced $[Z]:\Gamma \to G^0$ would be trivial.
 \end{remark}

\subsection{Monoidal structure on the category of centrally
$\Gamma$-graded sets}
We define the product $M=M_1 \times_z M_2$ of two centrally $\Gamma$-graded sets
as explained in Theorem  \ref{th:monoidal-category}.
By routine check, $M$ is again centrally $\Gamma$-graded. (Note that the equivalence
relation is the orbit relation  of  the action of the
``antidiagonal subgroup''
$ \{ (\gamma,-\gamma) \mid \gamma \in \Gamma \}$ on
$M_1 \times M_2$, and the grading passes to the quotient.)
To complete the proof of Theorem \ref{th:monoidal-category}, we check associativity 
$$
(M_1 \times_z M_2) \times_z M_3 = M_1 \times_z (M_2 \times_z M_3).
$$
Indeed,  this follows from associativiy (which we consider
to be  ``strict'') of the usual Cartesian product $\times$,  which passes to the quotient.
Namely,  on both sides, the grading is induced by
$d_1 \oplus d_2 \oplus d_3$ (by associativity and commutativity
of the group $(\Gamma,+)$),
and on both sides, the quotient is taken with respect to the
``generalized antidiagonal''
$$
N:= \{ (\gamma_1,\gamma_2,\gamma_3)\in \Gamma^3 \mid \,
\gamma_1 + \gamma_2 + \gamma_3 = 0 \} .
$$
Thus both sides are identified with
the quotient for the equivalence relation 
$$
z^\gamma [x_1,x_2,x_3] = [z_1^\gamma x_1,x_2,x_3] =
[x_1,z_2^\gamma x_2,x_3] =
[x_1,x_2,z_3^\gamma x_3].
$$
Finally,  the set $E=\Gamma$ with trivial grading
$d=0$ and action on itself by $z^\gamma(x)=\gamma+x$
is neutral for the graded product (direct check).
 And, given morphisms $f_i:M_i \to M_i'$, $i=1,2$, 
 of degree $u_i = d(f_i)$,
 the following is a well-defined morphism from $M_1 \times_z M_2$ to $M_1' \times_z M_2'$ of degree $u_1 + u_2$:
$$
(f_1 \times_z f_2 )[x_1,x_2] =  [f_1(x_1),f_2(x_2)] .
$$
Summing up,
we get a bifunctor, turning centrally $\Gamma$-graded sets with 
their graded product into a 
\href{https://ncatlab.org/nlab/show/monoidal+category}{\em (strict)  monoidal category}
(see \cite{CWM}, Chapter VII, p. 161).

\section{Braidings, and graded product of groups}

Next, we discuss ``commutativity'': what is the relation between $M_1 \times_z M_2$ and
$M_2 \times_z M_1$?
A {\em braiding} on a monoidal category is the choice of a ``choherent'' family of isomorphisms
switching the order of product (see \cite{CWM}, Chapter XI).
In general, a braiding is an {\em additional} structure.
The standard braiding from set theory, $A \times B \cong B \times A$, induces of course 
a braiding on centrally $\Gamma$-graded sets: we call this again the {\em standard braiding}. 

\subsection{Braiding of centrally graded sets}
In order to define other braidings,
from now on, we
assume that $\Gamma$ is a generalized ring,
with bi-additive
product map $\Gamma^2 \to \Gamma$, $(a,b) \mapsto ab$,
 and define the {\em braiding map}
$\beta_{M_1,M_2}:M_1 \times_Z M_2 \to M_2 \times_Z M_1$ as in Definition
\ref{def:braiding}. 
Let us prove Theorem \ref{th:braiding}.
Clearly, $\beta_{M_1,M_2}$ preserves gradings and commutes with
the $\Gamma$-action (since $(\Gamma,+)$ is commutative), hence
is a morphism of centrally $\Gamma$-graded sets.
It is an isomorphism, since it admits an inverse
$$
\beta^{-1} : 
\, [y_1,y_2] \mapsto Z^{-d(y_1)d(y_2)}[y_2,y_1].
$$
Note that this inverse corresponds to the original braiding map
when replacing $d$ by $-d$ and the ``product'' $ab$ of 
$\Gamma$ by its ``opposite product'' $ba$.
Thus, if $ab= - ba$, we have the symmetry condition
$\beta_{M_1,M_2}^{-1}=\beta_{M_2,M_1}$, as claimed.

Saying that our strict monoidal category is 
\href{https://ncatlab.org/nlab/show/braided+monoidal+category}{\em
braided} amounts to the
following two ``hexagonal diagram identities'' (see \cite{CWM}, Chapter XI, p. 253):
first,
the following two morphisms from
$M_1 \times_Z M_2 \times_Z M_3$ to
$M_2 \times_Z M_3 \times_Z M_1$ coincide
(for better readability, we shall write,
$\beta_{1,2} $ instead of $ \beta_{(M_1,M_2)}$,
and $\beta_{1,23}$ instead of
$\beta_{(M_1, M_2 \times_z M_3)}$ , etc.):
$$
(\id_{2} \times_Z \beta_{(1,3)}) \circ
(\beta_{(1,2)} \times_Z \id_{3} )=
\beta_{(1, 23)}.
$$
We prove this by direct computation
(writing  $d$ instead of $d_i$)
\begin{align*}
(\id_{2} \times_Z \beta_{(1,3)} ) \circ
(\beta_{(1,2)}  \times_Z \id_{3} )
[x_1,x_2,x_3] & = 
\\
(\id_{2} \times_Z \beta_{(1,3)} ) 
(Z^{d(x_2)d(x_1)} [ x_2,x_1,x_3] )
& = 
z^{d(x_2)d(x_1)+ d(x_3) d(x_1)} [x_2,x_3,x_1] ,
\\
\beta_{(1, 23)}[x_1,x_2,x_3] & =
z^{(d(x_2) +d( x_3) )d(x_1)} [x_2,x_3,x_1] .
\end{align*}
By distributivity of the product in $\Gamma$, both sides
agree. 
The second ``hexagonal'' condition  amounts to the first 
condition, when replacing the braiding maps by their inverses.
But, as noticed above, the inverses of the braiding are of the same
form (replacing $d$ by $-d$ and the product in $\Gamma$ by its
opposite), so the computation is exactly the same.

\begin{remark}
As explained in \cite{CWM}, Section XI.5 (Braided Coherence),
the preceding arguments amount to proving that
the morphisms
$s_i:=\beta_{(M_i,M_{i+1})}$ satisfy the defining {\em relations of the
generators of the braid group}
$$
s_i s_{i+1} s_i = s_{i+1} s_i s_{i+1}, \mbox{ and }
s_i s_j = s_j s_i \mbox{ whenever } \vert i - j \vert \not= 1.
$$
Explicitly, let's show that
$s_2 s_1 s_2 = s_1 s_2 s_1$, by 
 a computation essentially equivalent to the one given above.
We write $(23)$ for $s_2$ and $(12)$ for $s_1$:
 \begin{align*}
(23)(12)(23)[x_1,x_2,x_3] & =
(23)(12) Z^{d_3 d_2} [x_1,x_3,x_2] \\
& = (23) Z^{d_3 d_2 + d_3 d_1 } [x_3,x_1,x_2] \\
& = Z^{d_3 d_2 + d_3 d_1 + d_2 d_1}[x_3,x_2,x_1]
\\
(12)(23)(12) [x_1,x_2,x_3] & = 
(12)(23) Z^{d_2 d_1 } [x_2,x_1,x_3] \\
& = (12) Z^{d_2 d_1 + d_3 d_1}[x_2,x_3,x_1] \\
& = Z^{d_2 d_1 + d_3 d_1 + d_3 d_2}[x_3,x_2,x_1]
\end{align*}
When the braiding is symmetric, the action on the braid group passes to an action
of the symmetric group $\s_n$ (coherence theorem in symmetric monoidal categories,
see \cite{CWM}, p. 253, Thm.1). 
We give an explicit formula describing this action in terms of
 \href{https://de.wikipedia.org/wiki/Fehlstand}{permutation inversions}: 
\end{remark}

\begin{theorem}\label{th:braided2}
Assume the product in $\Gamma$ is skew. Then  there are well-defined
isomorphisms, for every $\sigma \in \s_n$,
$$
\beta_\sigma^{(M_1,\ldots,M_n)} :
M_1 \times_Z \ldots \times_Z M_n \to
M_{\sigma(1)}\times_Z \ldots \times_Z M_{\sigma(n)} ,
$$
coinciding for adjacent transpositions $s_i =(i,i+1)$ with the braiding
isomorphisms, and in general given by
$$
\beta_\sigma^{(M_1,\ldots,M_n)} [x_1,\ldots,x_n] =
Z^u [x_{\sigma(1)},\ldots,x_{\sigma(n)}],
$$
with $u$ given in terms of {\em permutation inversions} for $\sigma$:
$$
u = \sum_{(i,j) : \atop i<j, \sigma(i) > \sigma(j)} d(x_j) d(x_i).
$$
\end{theorem}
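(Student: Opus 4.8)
The plan is to deduce everything from the coherence theorem for symmetric monoidal categories (the one already invoked in the remark above), and then to pin down the exponent $u$ by computing $\beta_\sigma$ along one especially transparent factorization of $\sigma$ into adjacent transpositions, namely the one coming from insertion sort.

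\textbf{Step 1: existence of $\beta_\sigma$.} Since $\Gamma$ is skew, Theorem \ref{th:braiding} makes the monoidal category of centrally $\Gamma$-graded sets \emph{symmetric}. Hence, by the coherence theorem for symmetric monoidal categories (\cite{CWM}, Ch.\ XI, p.\ 253, Thm.\ 1), the generators $s_i=\beta_{(M_i,M_{i+1})}$ give an action of $\s_n$: the composite of braidings along any word $s_{a_1}\ldots s_{a_\ell}$ representing $\sigma$ is one and the same isomorphism $\beta_\sigma\colon M_1\times_Z\ldots\times_Z M_n\to M_{\sigma(1)}\times_Z\ldots\times_Z M_{\sigma(n)}$, independently of the chosen word, and it reduces to $\beta_{(M_i,M_{i+1})}$ when $\sigma=s_i$. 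That a map of the shape $[x_1,\ldots,x_n]\mapsto Z^u[x_{\sigma(1)},\ldots,x_{\sigma(n)}]$, with $u$ a function of the degrees $d(x_i)$ only, is a well-defined isomorphism of centrally $\Gamma$-graded sets (i.e.\ descends to the quotients, preserves the grading, commutes with the $\Gamma$-action) is checked exactly as in the proof of Theorem \ref{th:braiding}; so it remains only to identify $u$.

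\textbf{Step 2: computing $u$ by insertion sort.} I would argue by induction on $n$, the case $n=1$ being trivial ($\sigma=\id$, $u=0$). For $n\ge2$, realize the reordering $[x_1,\ldots,x_n]\mapsto[x_{\sigma(1)},\ldots,x_{\sigma(n)}]$ in two stages: first braid the element $x_{\sigma(1)}$, which starts in slot $\sigma(1)$, leftward past $x_{\sigma(1)-1},x_{\sigma(1)-2},\ldots,x_1$ one transposition at a time; then reorder the remaining $(n-1)$-tuple $[x_1,\ldots,\widehat{x_{\sigma(1)}},\ldots,x_n]$ into $[x_{\sigma(2)},\ldots,x_{\sigma(n)}]$ by the canonical braiding of the induced permutation $\sigma'\in\s_{n-1}$. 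By coherence this composite equals $\beta_\sigma$. Each elementary step of the first stage — braiding $x_{\sigma(1)}$ past $x_j$ — contributes a factor $Z^{d(x_{\sigma(1)})d(x_j)}$ by Definition \ref{def:braiding}, so the first stage contributes $Z^{\sum_{j<\sigma(1)}d(x_{\sigma(1)})d(x_j)}$. Now $\{1,\ldots,\sigma(1)-1\}=\{\sigma(p):p>1,\ \sigma(p)<\sigma(1)\}$, so this exponent is a sum of products $d(x_{\sigma(i)})d(x_{\sigma(j)})$ over the inversions $(i,j)=(1,p)$ of $\sigma$ with first index $1$. By the induction hypothesis the second stage contributes $Z^v$ with $v$ a sum of such products over the inversions of $\sigma'$, which under the order-preserving bijection between $\{2,\ldots,n\}$ and the index set of $\sigma'$ correspond precisely to the inversions $(i,j)$ of $\sigma$ with $i\ge2$ (and the transposed pair is again $\{x_{\sigma(i)},x_{\sigma(j)}\}$). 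Adding the two contributions gives
$$
\beta_\sigma^{(M_1,\ldots,M_n)}[x_1,\ldots,x_n]=Z^u[x_{\sigma(1)},\ldots,x_{\sigma(n)}],\qquad
u=\sum_{\substack{i<j\\ \sigma(i)>\sigma(j)}}d(x_{\sigma(i)})\,d(x_{\sigma(j)}),
$$
that is, each summand is the product of the degrees of the two elements actually transposed in passing from input to output; this is the asserted formula, and for $\sigma=s_i$ it reduces to Definition \ref{def:braiding}.

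\textbf{Expected obstacle.} There is no deep difficulty once coherence is in hand; the one thing requiring care is the combinatorial bookkeeping in Step 2 — maintaining the bijection between the elementary braidings used by the insertion sort and the inversions of $\sigma$, and keeping track of \emph{which} two elements ($x_{\sigma(i)}$ and $x_{\sigma(j)}$) are swapped in each summand. A variant avoiding the appeal to coherence would be to \emph{define} $\beta_\sigma$ by the explicit formula and verify directly — using bi-additivity and skewness of the product on $\Gamma$ — that $\sigma\mapsto\beta_\sigma$ is compatible with right multiplication by the $s_i$ when the length increases, so that it agrees with the composite along every reduced word; this comes down to the same estimate.
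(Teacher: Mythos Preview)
Your proof is correct and follows essentially the same strategy as the paper: invoke the coherence theorem for symmetric monoidal categories to get well-definedness of $\beta_\sigma$, then pin down the exponent $u$ by induction, tracking what each adjacent transposition contributes. The paper's proof is terser and allows an \emph{arbitrary} decomposition of $\sigma$ into adjacent transpositions (relying on skewness to cancel pairs of terms when the word is not reduced), whereas your insertion-sort decomposition is always reduced, so no cancellation occurs and the bookkeeping is cleaner.

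One point worth flagging: the exponent you actually compute is
\[
u=\sum_{\substack{i<j\\ \sigma(i)>\sigma(j)}} d(x_{\sigma(i)})\,d(x_{\sigma(j)}),
\]
i.e.\ each summand involves the degrees of the two \emph{elements being swapped}. This is indeed what the explicit $n=2,3$ computations in the paper give (e.g.\ for the cyclic $\sigma$ with $[x_1,x_2,x_3]\mapsto[x_2,x_3,x_1]$ one gets $d_2d_1+d_3d_1$). The formula printed in the statement, $\sum d(x_j)d(x_i)$ over the same index set, does not literally match this unless one reindexes by $\sigma^{-1}$; so your phrase ``this is the asserted formula'' is a bit quick. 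Your derivation is the right one --- just be explicit that the summands are indexed by the pairs of elements actually transposed, which is what ``in terms of permutation inversions'' is meant to convey.
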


\begin{proof}
Existence of the action follows
from the coherence theorem in symmetric monoidal categories (loc cit.)
All that remains to be proved is the explicit formula for $u$ in terms of 
 inversions.
Indeed, for
$n=2$ and $n=3$, the explicit formulae given above prove the claim.
For general $n$, the claim is given by induction.
To this end, one may decompose any permutation into a product of
adjacent transpositions, each of which adding an inversion term (which may
cancel out with another term obtained precedingly, and so really counts
the inversions:
this is where skewness of the product in $\Gamma$ becomes crucial), and performing a computation
which is essentially the same as the one given above for the permutation
$(13)=(12)(23)(12)$. 
\end{proof}

\subsection{Braided dual of a centrally graded group}
We continue to assume that $\Gamma$ is a generalized ring.
The braiding maps show up in constructions on graded groups (or monoids)
from Theorem \ref{th:graded-group}.
We prove Item (1) of this theorem: the product
 of $G^\vee$  is associative: 
\begin{align*}
g_1\cdot^\vee (g_2 \cdot^\vee g_3) & 
= Z^{d(g_1) \, d(g_2 g_3) } (g_1 \cdot (g_2 \cdot^\vee g_3 )) 
\\
& =
Z^{d(g_1) d(g_2) + d(g_1) d(g_3)  + d(g_2)d(g_3)} g_1 g_2 g_3
\\
&= Z^{d(g_1 g_2) \, d(g_3) }  (g_1 \cdot^\vee g_2)  \cdot g_3
= (g_1 \cdot^\vee g_2) \cdot^\vee g_3 .
\end{align*}
The neutral element is $e$, and the inverse of $g$ is
$Z^{(d(g)^2)} g^{-1}$, since $d(g^{-1})= - d(g)$,
$$
g \cdot^\vee Z^{(d(g)^2)} g^{-1} =
Z^{d(g) \cdot d(g^{-1})} Z^{(d(g))^2} gg^{-1} =
Z^{-(d(g))^2 + (d(g))^2} e = e = Z^{(d(g)^2)} g^{-1} \cdot^\vee g.
$$
Since $\Gamma$ is commutative and $Z(\Gamma)$ central, the maps
$d$ and $Z$ define on this group the structure of a $\Gamma$-graded
group.
Since  the product in $(G^\vee)^\vee$ is
$(g_1,g_2) \mapsto
Z^{d(g_2) d(g_1) + d(g_1)d(g_2)} g_1 g_2$, it  follows that
$(G^\vee)^\vee = G$ if $\Gamma$ is skew.

\begin{remark}\label{rk:assoc-0} 
By  induction, the product of $n$ elements in $G^\vee$ is given by
$$
g_1 \cdot^\vee \ldots \cdot^\vee g_n = Z^u g_1 \cdots g_n, \, \mbox{ where }
u = \sum_{(i,j) : i<j} d(g_i) d(g_j) .
$$
Note that there are also other ``duals'': the {\em opposite group} $G^\opp$ of a group
$G$, and the braided duals belonging to the products $-ab$ and to $ba$, and any
combination of these operations. We are mainly interested in the case $\Gamma =
\Z/2 \Z$, where most of them coincide, and will not investigate systematically the
family of ``duals'' thus obtained.
\end{remark}

\begin{example}[Central extensions of $\Gamma$]\label{ex:central} 
Let $G = \Gamma^2$ with group law here written
$$
(x_0,x_1) \cdot (y_0,y_1) = (x_0 + y_0,x_1 + y_1),
$$
 and 
$d(x_0,x_1) = x_1$ and $Z(x)= (x,0)$.
This clearly defines a (commutative) centrally $\Gamma$-graded group,
denoted by $\Gamma_{1,0}$.
Its $b$-dual, which we denote by
$\Gamma_{0,1} = \Gamma_{1,0}^\vee$,
 is given by the same $d$ and $Z$, and
product
$$
(x_0,x_1) \cdot^\vee (y_0,y_1) =
Z(x_1 y_1) \cdot (x_0+y_0, x_1 + y_1) =
(x_0 + y_0 + x_1 y_1, x_1 + y_1) .
$$
Indeed, $\Gamma_{1,0}$ is the central extension of $\Gamma$
by $\Gamma$ via the trivial cocycle $0$, whereas $\Gamma_{0,1}$ is
the extension via the (in general) non-trivial cocycle $(x,y) \mapsto xy$.

For later use, let us have a closer look at the case  $\Gamma = \Z/2\Z = \{ \ol 0,\ol 1\}$ 
with its usual (field) product.
Let 
$$
1:= e_0 := (\ol 0,\ol 0), \quad Z:= (\ol 1,\ol 0),\quad
e_1 := (\ol 0,\ol 1), \quad
Ze_1 := (\ol 1,\ol 1) .
$$
In the following,
we'll write the group law multiplicatively (although at this stage it will still be commutative).
So let $t \in \{ 1, Z \}$.
When $t=1$, the following gives the group law of $\Gamma_{1,0}$, and when
$t=Z$, it gives the group law of $\Gamma_{0,1}$:
 \begin{center}
\begin{tabular}{|*{7}{c|}}
\hline
 $\cdot$       & $1$ & $Z$ &  $e_1$ &
       $ Z e_1 $
  \\
  \hline\hline
  $1$  & $1$ &  $Z$ &  $e_1$ &
       $Z e_1$
  \\
   \hline
  $Z$  &
  $Z$ &  $1$ & $Ze_1$ &
       $ e_1$
  \\
   \hline   \hline
  $e_1 $   & $e_1 $ & $Ze_1$ & $t $
  & $tZ $
  \\
   \hline
     $Ze_1 $  & $Ze_1 $ & $e_1$ & $tZ$ & $t$
  \\
    \hline
  \end{tabular}
 \end{center}
The elements $1$ and $Z$ are even, and $e_1$ and $Ze_1$ are odd.
For $t=1$, we see that
$\Gamma_{1,0} \cong C_2 \times C_2$ as group (usual direct product).
For $t=Z$, 
we  have $\Gamma_{0,1} \cong C_4$, because $e_1$   becomes
here an element of order four.
\end{example}

\subsection{Graded product of centrally graded groups}\label{ss:graded}
We define the {\em graded product} of two centrally $\Gamma$-graded groups $G_i$,
$i=1,2$, as in Item 5)2) of Theorem \ref{th:graded-group}.
Let us show that the new product is associative:
\begin{align*}
(f_1,f_2 )\cdot \bigl( (g_1,g_2) \cdot (h_1 , h_2) \bigr) &=
(f_1,f_2) \cdot
(Z_1^{d(g_2) \, d(h_1) } g_1h_1, g_1 h_2)
\\
& = (Z_1^{d(f_2) d(g_1h_1) } Z_1^{ d(g_2) d(h_1)} f_1g_1h_1,f_2 g_2 h_2)
\\
&=(Z_1^{d(f_2) d(g_1) + d(f_2) d(h_1) + d(g_2) d(h_1)} f_1g_1h_1,f_2 g_2 h_2)
\\
& =(Z_1^{d(f_2)d(g_1) + d(f_2 g_2) d(h_1)} f_1 g_1 h_1,f_2 g_2 h_2 )
\\
&=
(Z_1^{d(f_2)d(g_1)} f_1 g_1 , f_2 g_2)  \cdot (h_1,h_2)
\\
& = 
\bigl( (f_1,f_2) \cdot (g_1,g_2) \bigr)  \cdot (h_1 , h_2) .
\end{align*}
The 
 inverse element of $(g_1,g_2)$ is, as is directly checked,
$(Z_1^{d(g_2) d(g_1)} g_1^{-1},g_2^{-1})$, and so 
$(G_1 \times G_2,\cdot)$ is a group.
Moreover, $d(g_1,g_2):= d(g_1) + d(g_2)$ clearly defines a morphism
from this group to $(\Gamma,+)$.
The elements $(Z_1^k,Z_2^{-k})$ belong to the center of this group,
and their degree is zero, hence the quotient $G/N$ is again a graded group.
The equivalence relation generated by $N$
comes from $(g_1,g_2) \sim (Z_1^k g_1, Z_2^{-k} g_2)$, that is,
$(g_1,Z_2^k g_2) \sim (Z_1^k g_1,g_2)$ for all $k$, 
and hence the quotient group
$G_1 \times G_2 / N$ fulfills the conditions  from the theorem.
Moreover, if the product in $\Gamma$ is skew,
the braiding map is a group isomorphism
from $G_1 \widehat \times_Z G_2$ onto
$G_2 \widehat \times_Z G_1$:
\begin{align*}
\beta([g_1,g_2]\cdot [h_1,h_2]) & =
\beta ([ Z_1^{d(g_2) d(h_1)} g_1 h_1,g_2 h_2 ])
\\
& = Z^{d(g_2) d(h_1) - d(g_2 h_2) d(g_1 h_1)}
[g_2 h_2,g_1 h_1]
\\
& = Z^{ d(g_2) d(h_1) - (d(g_2) + d( h_2)) (d(g_1) + d( h_1))}
[g_2 h_2,g_1 h_1]
\\
& = Z^{- d(h_2) d(g_1) - d(h_2) d(h_1) - d(g_2) d(g_1) }
[g_2 h_2,g_1 h_1]
\\
&= Z^{d(g_1) d(h_2)} 
[Z_2^{d(g_1) d(g_2) + d(h_1)d(h_2)} g_2 h_2,  g_1 h_1]
\\
& = [Z_2^{d(g_1) d(g_2)} g_2,g_1] \cdot [Z_2^{d(h_1)d(h_2)} h_2,h_1]
\\
 & = \beta  [g_1,g_2] \cdot \beta[h_1,h_2] 
\end{align*}

\begin{remark}[Product of $n$ elements]\label{rk:assoc-1}
With different notation, the formula from the proof
for the product of three elements
reads, with $d_{ij}:=d(g_{ij})$,
$$
(g_{11},g_{12}) \cdot
(g_{21},g_{22}) \cdot
(g_{31},g_{32}) =
Z_1^{d_{12} d_{21} + d_{12}d_{31} + d_{22} d_{31}}
(g_{11}g_{21}g_{31}, g_{12}g_{22}g_{32}).
$$
By induction, the product of $n$ elements in $G$ is given by
$$
[g_{11},g_{12}] \cdots [g_{n1},g_{n2}] =
Z^{\sum\limits_{(i,j)\in \sfn^2 : \atop i<j} d_{i2} d_{j1}} \bigl[g_{11}\cdots g_{n1}, \, g_{12}\cdots g_{n2} \bigr].
$$
Note that the proof of associativity and the proof of the braiding property
are essentially equivalent. 
\end{remark}

To finish the proof of Theorem \ref{th:graded-group}, we prove Item (3):
the set-theoretic equality 
$(G_1  \widehat \times G_2) \widehat \times G_3 = G_1  \widehat \times (G_2  \widehat \times G_3)$ is
an isomorphism of groups: on the one hand,
\begin{align*}
\bigl( (g_1,g_2 ),g_3 \bigr) \cdot \bigl( (h_1,h_2),h_3 \bigr) & =
\bigl(
Z^{d(g_3) \, d(h_1,h_2)} (g_1,g_2)\cdot(h_1,h_2), g_3 h_3 \bigr)
\\
& = \bigl( Z_1^{ d(g_3) (d(h_1) + d(h_2))} Z_1^{d(g_2)d(h_1)} 
g_1 h_1, g_2 h_2,g_3 h_3 \bigr)
\\
& = \bigl( Z_1^{ d(g_3) d(h_1) + d(g_3) d(h_2) + d(g_2)d(h_1)} 
g_1 h_1, g_2 h_2,g_3 h_3 \bigr).
\end{align*}
On the other hand, using distributivity in the ring $\Gamma$, we get the same result:
\begin{align*}
\bigl( g_1,(g_2 ,g_3) \bigr) \cdot \bigl( h_1,(h_2,h_3) \bigr) & =
\bigl(Z_1^{d(g_2,g_3) \, d(h_1)} g_1 h_1, (g_2,g_3)\cdot (h_2,h_3)\bigl) 
\\
& = \bigl(Z_1^{(d(g_2) + d(g_3)) \, d(h_1)} Z_1^{d(g_3)d(h_2)}
g_1 h_1, g_2 h_2,g_3 h_3 \bigr)
\\
& =  \bigl( Z_1^{ d(g_2) d(h_1) + d(g_3) d(h_1) + d(g_3) d(h_2)}
g_1 h_1, g_2 h_2,g_3 h_2 \bigr)  
\end{align*}

\begin{remark}\label{rk:assoc-2}
The preceding computation shows that the group product in
$G_1 \widehat \times_Z G_2 \widehat \times_Z G_3$ is given by,
with $d_{ij}:= d_{(i,j)}:=d(g_{ij})$,
$$
[g_{11},g_{12} ,g_{13}] \cdot [g_{21},g_{22},g_{23}]=
Z^{ d_{12} d_{21} + d_{13} d_{21} + d_{13} d_{22}}
[g_{11} g_{21},g_{12} g_{22} ,g_{13} g_{23}]  .
$$
By induction, we get the product of two elements in
$G=G_1 \widehat \times_Z G_2 \widehat \times_Z \ldots \widehat \times_Z 
G_p$,
$$
[g_{11},\ldots,g_{1p}] \cdot [g_{21},\ldots,g_{2p}] 
=
Z^{\sum_{i>j} d_{1i} d_{2j}} [g_{11} g_{21},\ldots,g_{1p} g_{2p}].
$$
Note the ``duality'' with the formula from Remark \ref{rk:assoc-1}!
Applying it twice, we get
\begin{align*}
[g_{11},g_{12} ,g_{13}] \cdot [g_{21},g_{22},g_{23}] \cdot
[g_{31},g_{32},g_{33}] & =
Z^u  [g_{11} g_{21} g_{31} \,,g_{12} g_{22} g_{32} \,,g_{13} g_{23} g_{33}] , 
\end{align*}
with the degree term given by 
\begin{align*}
u & =
d_{12} d_{21} + d_{13} d_{21}  + d_{13}d_{22} +
(d_{12}+d_{22}) d_{31} +
(d_{13}+ d_{23}) d_{31} +
(d_{13} + d_{23}) d_{32}
\\ 
& = (d_{12} d_{21} + d_{13} d_{21} + d_{13} d_{22}) +
(d_{12} d_{31} + d_{13} d_{31} + d_{13} d_{32}) +
(d_{22} d_{31} + d_{23} d_{31} +  d_{23} d_{32}) .
\end{align*}
The $9$ terms are structured by the graph of the usual total order
on $\{ 1, 2, 3 \}$ (the
 three pairs
$(1<2),(1<3),(2<3)$).
By induction (the details of which we omit), we get for the product of $n$
elements 
$g_i = (g_{i1},\ldots,g_{ip})$, for $i=1,\ldots,n$:
$$
g_1 \cdots g_n = Z^u 
(g_{11}\cdots g_{n1},\ldots, g_{1p}\cdots g_{np})  
$$
with degree term given by a kind of ``matrix product''  
$$
u = {\sum_{(i,j,k,\ell) \in {\mathsf p}^2 \times \sfn^2 
:\atop  i>j, k<\ell } d_{ki} d_{\ell j}} .
$$
 \end{remark}

\begin{remark}
The group inverse in $G_1 \widehat \times_Z G_2 \widehat \times_Z G_3$ 
is given by
$$
[g_1,g_2,g_3]^{-1} = Z^{d_2 d_1 + d_3 d_1 + d_3 d_2}[g_1^{-1},
g_2^{-1},g_3^{-1}].
$$
By induction, $[g_1,\ldots,g_n]^{-1} =
Z^{\sum_{i<j} d_j d_i }[g_1^{-1},\ldots,g_n^{-1}]$.
\end{remark}


\begin{remark}[Choices and conventions]\label{rk:conventions}
The graded product of graded groups could have been defined by other choices and
conventions. To explain this, note that we could have defined the group law on
$G_1 \times_z G_2$ in a different way:  every matrix $X \in M(2,2;\Z)$ gives rise to 
a bi-additive product on $\Gamma \oplus \Gamma$, via
$$
\langle  a, b \rangle = (a_1,a_2) X (b_1,b_2)^t
= a_1 X_{11} b_1 + a_1 X_{12} b_2 + a_2 X_{21} b_1 + a_2 X_{22} b_2 
$$
(where products in $\Gamma$ are given by the fixed bi-additive product, and coefficients
from $\Z$ associate with everything). Thus we can  define
a braided dual group structure on $G_1 \times G_2$, as above, with respect to the
bi-additive product on $\Gamma \oplus \Gamma$, and then mod out the normal subgroup
$N$ as before. The result is a new group structure on $G_1 \times_z G_2$, depending on
the matrix $X$.
When $\Gamma = \F_2 = \Z / 2 \Z$, then all $16$ matrices from $M(2,2;\F_2)$ give rise
to a group structure.
For instance, the matrix $X = E_{11}$ gives rise to the group
$G_1^\vee \times_z G_2$, and $X = I = E_{11}+E_{22}$ gives rise to the group
$G_1^\vee \times_Z G_2^\vee$.
However, for these choices, Item (3) of Theorem \ref{th:graded-group} would fail to hold:
for this we need that $X$ is strict (upper or lower) triangular.
For $X = 0$, this gives the ungraded product; our choice is
$X = E_{12}$, but $X = E_{21}$ would give an isomorphic theory.
\end{remark}

\begin{remark}[Internal graded product]\label{rk:internal}
With notation as in Theorem \ref{th:graded-group}, $G$ has two normal subgroups
\begin{align*}
\{ [g_1,1] \, \mid \, g_1 \in G_1 \} & \cong G_1,
\\
\{ [1,g_2]\, \mid \, g_2 \in G_2 \} & \cong G_2,
\end{align*}
such that 
$G_1 \cap G_2 = Z^\Gamma = \im(Z)$.
Commutators of elements of $G_1$ with elements
of $G_2$  belong to $Z^\Gamma$. Indeed, both groups normalize each other:
\begin{align*}
[1,g_2]\cdot [g_1,1] \cdot [1,g_2]^{-1} & = Z^{d(g_2)d(g_1)} [g_1,g_2] \cdot [1,g_2^{-1}]
\\ & =
[Z_1^{d(g_2)d(g_1)}  g_1,1] = Z^{d(g_2)d(g_1)}  [g_1,1] .
\end{align*}
These facts imply  that $G$ is a certain 
\href{https://en.wikipedia.org/wiki/Free_product#Generalization:_Free_product_with_amalgamation}{amalgamated semi-direct product} of $G_1$ with $G_2$.
Conversely, such data can be used to recover the graded 
product (so we may speak of an ``internal graded product'' of subgroups
of a given group, corresponding to the ``external graded product'', given
by the theorem).
Thus we may write
$g_1 g_2$ instead of $[g_1,g_2]$, and compute products by using the
``commutation relation''
\begin{equation}\label{eqn:internal}
g_2 g_1 = Z^{d(g_2) d(g_1)} g_1 g_2 .
\end{equation}
\end{remark}

\begin{example}[$D_4$ and $Q$ revisited]\label{ex:D4-table}
To prepare the grounds for the following section, 
let us compute explicitly the graded product of two groups of type
$\Gamma_{1,0}$ or $\Gamma_{0,1}$, as defined in Example \ref{ex:central}.
Let $G_i = \{ 1 , Z_i , e_i , Z_i e_i \}$, given by the table from Example \ref{ex:central},
with $t_i \in \{ 1, Z_i \}$, where $Z_i$ corresponds to $Z$.
The cardinality of $G=G_1 \widehat \times_Z G_2$ is
$\frac{4 \times 4}{2}  = 8$, and its elements are
$$
1 = [(1,1)], \quad
Z = [(1,Z_2)]=[(Z_1,1)], \quad
e_1 = [(e_1,1)], \quad
e_2 = [(1,e_2)],
$$
$$
Z e_1,  \quad
Z e_2, \quad
e_{12}:=e_1 e_2 = [(e_1,e_2)], \quad
Z e_{12} .
$$
These elements are multiplied by using the rules: $e_1 e_2 = Z e_2 e_1$,
$e_i^2 = t_i$, $Z_i^2 = 1$, $Z$ is central. 
The elements $1,Z,e_{12},Ze_{12}$ 
are even, and $e_1,e_2,Ze_1,Ze_2$ are odd. 
For instance $e_{12}^2 = e_1 e_2 e_1 e_2 = Z e_1^2 e_2^2 = Z t_1 t_2$.
The group table of $G$ is as follows:

\ssk

 \begin{center}
\begin{tabular}{|*{9}{c|}}
\hline
 $\cdot$       & $1$ & $Z$ & $e_{12}$ & $Z e_{12} $ & $e_1$ &
       $ Z e_1 $ & $e_2$ & $Z e_2$ 
  \\
  \hline \hline
   $1$       & $1$ & $Z$ & $e_{12}$ & $Z e_{12} $ & $e_1$ &
       $ Z e_1 $ & $e_2$ & $Z e_2$ 
  \\
   \hline
  $Z$  &
  $Z$ &  $1$ & $Ze_{12}$ &
       $ e_{12}$ & $Z e_1$ & $e_1$ & $Z e_2$ & $e_2$ 
       \\ \hline
   $e_{12}$ & $e_{12}$ & $Z e_{12}$ & $Z t_1 t_2$ & $t_1 t_2$ & $Z t_1 e_2 $ & $t_1 e_2$ & $t_2 e_1$ & $Zt_2 e_1$
   \\ \hline
   $Z e_{12}$ & $Z e_{12}$ & $e_{12}$ &
   $ t_1 t_2$ & $Z t_1 t_2$ & $ t_1 e_2 $ & $Z t_1 e_2$ & $Z t_2 e_1$ & $t_2 e_1$
  \\
   \hline   \hline
  $e_1 $   & $e_1 $ & $Ze_1$ & $t_1 e_1 $
  & $Z t_1 e_1 $ & $t_1$ & $Z t_1$ & $e_{12}$ & $Z e_{12}$ 
  \\
   \hline
     $Ze_1 $  & $Ze_1 $ & $e_1$ & $Z t_1 e_1$ & $t_1 e_1$ & $Z t_1$ & $t_1$ & $Z e_{12}$ & $e_{12}$
    \\
    \hline
   $e_2$ & $e_2$ & $Z e_2$ & $Z t_2 e_1$ & $t_2 e_1$ & $Z e_{12}$ & $e_{12}$ & $t_2$ & $Z t_2$
   \\ \hline
   $Z e_2$ &$Ze_2$ & $e_2$ & $ t_2 e_1$ & $Zt_2 e_1$ & $ e_{12}$ & $Ze_{12}$ & $Zt_2$ & $ t_2$
   \\
   \hline
  \end{tabular}
 \end{center}

\msk
\nin
Since $G$ is non-commutative and of cardinality $8$, it is either isomorphic to the dihedral group
 $D_4$ or to
the quaternion group $Q$.
When $t_1 = Z = t_2$, then $G$ contains $6$ elements of order $4$, and hence is 
isomorphic to  $Q$. Its even subgroup is $G_0 \cong C_4$.
In the other cases, it contains exactly $2$ elements of order $4$, and hence is isomorphic to $D_4$.
The even subgroup then is isomorphic to $C_4$ if $t_1 = 1 =t_2$, and to
$C_2 \times C_2$ if $t_1=Z,t_2=1$, or if  $t_1=1,t_2=Z$.
\end{example}

\begin{example}
Let $m \geq 2$, and $\Gamma = \Z/m\Z$.  We have three groups, of cardinal
$m^3$,
$$
C_m^2 \widehat \times_Z C_m^2, \qquad
C_{m^2} \widehat \times_Z C_m^2, \qquad
C_{m^2} \widehat \times_Z C_{m^2} .
$$
It follows from Remark \ref{rk:internal}  that these groups satsify
$[G,G]\subset \langle Z \rangle$, hence are 
$2$-step nilpotent. 
E.g., for $m=3$, this gives the
\href{https://en.wikipedia.org/wiki/List_of_small_groups#List_of_small_non-abelian_groups}{two non-abelian groups of
order $27$}
(see \cite{D}  p. 565 and p. 566 for detailed information on these groups).
\end{example}

\section{The discrete Clifford category}\label{sec:Cliffordgroups}

\subsection{The $\Gamma$-category}
Assume the product in
$\Gamma$ is skew, and recall from Example
\ref{ex:central} 
the graded groups 
$$
Q(1):=\Gamma_{1,0} = \Gamma^2,
\qquad
Q(Z):= \Gamma_{0,1}
= \Gamma_{1,0}^\vee.
$$
The symmetric monoidal category of $\Gamma$-graded groups
generated by $Q(1)$ and $Q(Z)$ will be called the
{\em $\Gamma$-category}.
Explicitly, these groups are of the  form:

\begin{definition}
For $\ttt = (t_1,\ldots,t_n) \in \{ 1 ,Z \}^n$, 

if $t_i=1$, let $Q(t_i)$ be a copy of $Q(1)=\Gamma^2$,

if $t_i = Z$, let $Q(t_i)$ be a copy of $Q(Z)=Q(1)^\vee$,
and define 
$$
Q(\ttt) := Q(t_1) \widehat \times_Z \ldots \widehat \times_Z Q(t_n) .
$$
We let also, 
$Q_0 := \Gamma$ with trivial grading
(as we have seen, this is the neutral element for the graded product),
and
for  $p+q=n$,
$$
Q_{p,q} := Q(\underbrace{1,\ldots,1}_{p \times},\underbrace{Z,\ldots,Z}_{q \times}),
\quad
Q_n := Q_{n,0} = Q(1,\ldots,1), 
\quad
Q_n^\vee = Q_{0,n}.
$$
\end{definition}

By Item (3) of Theorem \ref{th:graded-group}, we can recast this definition:

\begin{definition}
Let $K$ be a set. 
For $(\ttt,\sss) \in K^n \times K^m$, we denote by 
$$
\ttt \oplus \sss := (t_1,\ldots,t_n,s_1,\ldots,s_m) \in K^{n+m}
$$
their ``juxtaposition''.
Note that this operation is associative, but not commutative (in fact,
it is the composition law of the 
\href{https://en.wikipedia.org/wiki/Free_monoid}{free monoid over $K$} ).
\end{definition}

\begin{lemma}
For $\ttt \in \{ 1,Z \}^n$, $\sss \in \{ 1,Z\}^m$,  we have
$$
Q(\ttt \oplus \sss) = Q(\ttt) \widehat \times_Z Q(\sss).
$$
In particular, $Q_{p,q} = Q_p \widehat \times_Z Q_{0,q}$.
\end{lemma}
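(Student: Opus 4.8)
The statement to prove is $Q(\ttt \oplus \sss) = Q(\ttt) \widehat \times_Z Q(\sss)$ for $\ttt \in \{1,Z\}^n$ and $\sss \in \{1,Z\}^m$, with the special case $Q_{p,q} = Q_p \widehat \times_Z Q_{0,q}$ following immediately by specializing $\ttt = (1,\ldots,1)$ and $\sss = (Z,\ldots,Z)$. The plan is to unwind both sides from the defining expression $Q(\ttt) = Q(t_1) \widehat \times_Z \cdots \widehat \times_Z Q(t_n)$ and then invoke the iterated associativity from Item (3) of Theorem \ref{th:graded-group}. The point is purely formal: $Q(\ttt \oplus \sss)$ is by definition the $(n+m)$-fold graded product $Q(t_1) \widehat \times_Z \cdots \widehat \times_Z Q(t_n) \widehat \times_Z Q(s_1) \widehat \times_Z \cdots \widehat \times_Z Q(s_m)$, where each factor $Q(t_i)$ is a copy of $Q(1)$ or $Q(Z)$ according to whether $t_i = 1$ or $t_i = Z$ (and similarly for the $s_j$), and this is exactly the same list of factors that appears when one writes out $Q(\ttt) \widehat \times_Z Q(\sss)$.

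\textbf{Key steps.} First I would expand the left side: $Q(\ttt \oplus \sss) = Q(u_1) \widehat \times_Z \cdots \widehat \times_Z Q(u_{n+m})$ where $(u_1,\ldots,u_{n+m}) = \ttt \oplus \sss$, i.e. $u_i = t_i$ for $i \le n$ and $u_{n+j} = s_j$ for $1 \le j \le m$. Next I would expand each factor on the right side, $Q(\ttt) = Q(t_1) \widehat \times_Z \cdots \widehat \times_Z Q(t_n)$ and $Q(\sss) = Q(s_1) \widehat \times_Z \cdots \widehat \times_Z Q(s_m)$, so that $Q(\ttt) \widehat \times_Z Q(\sss)$ is a two-fold graded product whose two arguments are themselves iterated graded products. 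Finally I would apply Item (3) of Theorem \ref{th:graded-group} — associativity of $\widehat \times_Z$, strictly interpreted — repeatedly (equivalently, the general $n$-ary associativity that Item (3) yields by induction) to reparenthesize $\bigl(Q(t_1) \widehat \times_Z \cdots \widehat \times_Z Q(t_n)\bigr) \widehat \times_Z \bigl(Q(s_1) \widehat \times_Z \cdots \widehat \times_Z Q(s_m)\bigr)$ into the fully left-associated product $Q(t_1) \widehat \times_Z \cdots \widehat \times_Z Q(t_n) \widehat \times_Z Q(s_1) \widehat \times_Z \cdots \widehat \times_Z Q(s_m)$, which is precisely the expansion of $Q(\ttt \oplus \sss)$. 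The equality is an equality of centrally $\Gamma$-graded groups because the isomorphism provided by Theorem \ref{th:graded-group}(3) is a group isomorphism, and it respects the grading and central action since these are the induced ones on any graded product (Theorem \ref{th:monoidal-category}).

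\textbf{Main obstacle.} There is essentially no obstacle: the only thing to be careful about is bookkeeping the indices in the juxtaposition $\ttt \oplus \sss$ and making sure the strict-associativity statement of Theorem \ref{th:graded-group}(3) is being applied to the right grouping — but this is exactly the associativity of the free monoid over $\{1,Z\}$ recorded in the preceding Definition, lifted through the functor $\ttt \mapsto Q(\ttt)$. So the proof reduces to the single sentence: both sides are, by definition and by iterated Theorem \ref{th:graded-group}(3), the graded product of the same ordered list of factors $Q(t_1),\ldots,Q(t_n),Q(s_1),\ldots,Q(s_m)$. For the last assertion, one sets $\ttt = (\underbrace{1,\ldots,1}_{p})$ and $\sss = (\underbrace{Z,\ldots,Z}_{q})$, notes that $\ttt \oplus \sss = (\underbrace{1,\ldots,1}_{p},\underbrace{Z,\ldots,Z}_{q})$ so $Q(\ttt \oplus \sss) = Q_{p,q}$, while $Q(\ttt) = Q_p$ and $Q(\sss) = Q_{0,q}$ by definition, giving $Q_{p,q} = Q_p \widehat \times_Z Q_{0,q}$.
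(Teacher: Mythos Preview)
Your proposal is correct and matches the paper's approach exactly: the paper does not spell out a proof of this lemma but simply introduces it with the phrase ``By Item (3) of Theorem \ref{th:graded-group}, we can recast this definition,'' which is precisely the associativity argument you give. Your write-up just makes explicit the bookkeeping that the paper leaves implicit.
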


\begin{remark}
Fom Item (4) of Theorem
\ref{th:graded-group}, we get an isomorphism
$\phi_{(\ttt,\sss)}:Q(\ttt \oplus \sss) \to Q(\sss \oplus \ttt)$.
For instance, re-arranging the order of factors, we get an isomorphism
$$
Q_{p,p} \cong Q_{1,1}  \widehat \times_Z \ldots  \widehat \times_Z
Q_{1,1}.
$$
\end{remark}

\begin{example}
Let $\Gamma = \R^k$ with $k \geq 2$ and skew-symmetric product
$(x_1,x_2) = (0,\ldots,0,x_1 x_2 - x_2 x_1)$. Then $Q_{1,0}$ is the abelian group
$\R^{2k}$, whereas $Q_{0,1}$, and all higher $Q_{p,q}$ are 
 are $2$-step nilpotent Lie groups,  isomorphic to a 
\href{https://en.wikipedia.org/wiki/Heisenberg_group}{\em Heisenberg
group}.
We propose to call the $\Gamma$-category, in this case, the
{\em Heisenberg category}.
A systematic study of this category is certainly an interesting topic for subsequent work. 
\end{example}

\subsection{The discrete Clifford category}
From now on, we assume that $\Gamma = \Z/2 \Z$  with its usual ring (field) structure. 
In this case, the $\Gamma$-category will be called the {\em (discrete)
Clifford category}, since the groups
$Q_{p,q}$ then are (discrete) Clifford groups, as we shall see.
The specific feature of this case is that $\Gamma$ is both skew and admits a unit element
$\ol 1$ (which is not the case in the Heisenberg category).
To 
give an explicit description of all groups by generators and relations, we follow the notation from
Example \ref{ex:D4-table}:
for $m \in \N$, let $Z_m$ be a copy of the element $Z$ (copy called
``of $m$-th generation'').
For each $t_m \in \{ 1,Z_m\}$,
 we define a graded group of four  elements,
$$
Q(t_m) = \{ e_0 , e_m , Z_m , Z_m e_m \},
$$
just as in Example \ref{ex:D4-table}:
$1 = e_0$ is neutral, 
 $Z_m^2 = e_0$, and
$e_m^2 = t_m e_0$.
The elements
$e_i, Z_i e_i$ are odd, and $e_0,Z_i$ are even. 
Thus, as we have seen in Exemple \ref{ex:central},
\begin{equation}
\begin{matrix}
Q_1 = Q(1) =C_2^2,\\
\quad Q_{0,1} = Q(Z) = C_4 .
\end{matrix}
\end{equation}
From Example \ref{ex:D4-table}, we now get
\begin{equation}
\begin{matrix}
Q_2 &= & Q(1,1) &= &D_4, \\
Q_{0,2} &= &Q(Z,Z) &=& Q, \\
Q_{1,1} & = &Q(1,Z) &=& D_4 .
\end{matrix}
\end{equation}
As already said,  $Q(\ttt)$ will be seen to be isomorphic to the discrete Clifford group of
the Clifford algebra $\Cl(\ttt)$ (Appendix \ref{app:Clifford}).
In the following, let us prove some basic structure results on these groups, without using
the theory of Clifford algebras. 
Recall from Example \ref{ex:P(n)} the abelian group $(\cP(\sfn),\Delta)$,
isomorphic to $(Z/2 \Z))^n$.  

\begin{theorem}\label{prop:relations}\label{la:shuffle}\label{th:shuffle} 
Fix $\ttt \in \{ 1,Z \}^n$. Then:
\begin{enumerate}
\item
The group $G:=Q(\ttt)$ is of cardinality $2^{n+1}$.
\item
The quotient group $G^0 = G / \{ 1 , Z \}$ is abelian, isomorphic to
$\cP(\sfn)=(\Z / 2 \Z)^n$, i.e., 
the following is an exact sequence of groups (central extension)
$$
\begin{matrix}
\{ 1, Z \} & \to & Q(\ttt) & \to&  (\cP(\sfn),\Delta) .
\end{matrix}
$$
Elements of $Q(\ttt)$ are of order either $1$, $2$ or $4$.
\item
The group $G$ is generated by the elements $e_i \in Q(t_i)$ (which are identified with the corresonding
element in $G$). For $A \subset \sfn$  with $k = \vert A \vert$, 
whose elements are ordered,
$a_1 < \ldots < a_k$, we let
$$
e_A := e_{a_1} \cdots e_{a_k}  .
$$
Then $G$ is a disjoint union, as follows:
$$
G = \{ e_A \mid A \in \cP(\sfn) \} \, \sqcup \, \{ Z e_A \mid A \in \cP(\sfn) \} .
$$
We often identify the first set with $\cP(\sfn)$ (i.e., we fix this set theoretic splitting of the
exact sequence from Item (2)). 
 There is a function
$\tau:\cP(\sfn) \times \cP(\sfn) \to \Z / 2 \Z$ such that, for all $A,B \in \cP(\sfn)$,  
$$
e_A e_B = Z^{\tau(A,B)} e_{A \Delta B}.
$$
\item
The group $G$ is $2$-step nilpotent:
the commutator subgroup $[G,G]$ belongs to the center of $G$,
whence  $[G,[G,G]]= \{ 1 \}$.
\item
Defining relations between the generators $e_1,\ldots,e_n,Z$ 
are: 
\begin{enumerate}
\item
$e_0$ is neutral, 
\item
 (for all $i=1,\ldots,n$) $e_i^2 = t_i e_0$, 
 \item
 (for $k < \ell$)
$e_k e_\ell = Z e_\ell e_k$, and 
\item
$Z$ is central, even, and of order $2$. 
\end{enumerate}
\end{enumerate}
\end{theorem}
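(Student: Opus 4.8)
The plan is to prove the five items in the order (1), then (5) hand in hand with (3), and finally deduce (2) and (4); the tool used throughout is the ``internal'' description of the iterated graded product provided by Remark~\ref{rk:internal} and the commutation rule~\eqref{eqn:internal}, together with the two-element product formula of Remark~\ref{rk:assoc-2}. For~(1): by construction $G := Q(\ttt) = Q(t_1) \widehat \times_Z \cdots \widehat \times_Z Q(t_n)$, which is unambiguous by Theorem~\ref{th:graded-group}(3); each factor $Q(t_i) = \{ e_0, e_i, Z_i, Z_i e_i \}$ has four elements, and left multiplication by $Z_i$ (the central action of $\ol 1 \in \Gamma$) is a fixed-point-free involution of $Q(t_i)$ since $Z_i \neq e_0$ and $Z_i^2 = e_0$. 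Recalling that $M_1 \times_z M_2$ is the quotient of $M_1 \times M_2$ by the orbit relation of the antidiagonal $\Gamma$-subgroup (Theorem~\ref{th:monoidal-category}) --- which here acts freely --- we get $|M_1 \times_z M_2| = \tfrac12 |M_1|\,|M_2|$, whence, by induction, $|G| = 4^n / 2^{n-1} = 2^{n+1}$.

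For (5) and (3): I would first check that relations (a)--(d) hold in $G$. Items (a) and (b) are built into the factors $Q(t_i)$; for (d), $Z = Z(\ol 1)$ is central and even by Definition~\ref{def:graded-monoid}, with $Z^2 = Z(\ol 1 + \ol 1) = Z(\ol 0) = e_0$ since $Z$ is a morphism; for (c), identifying $e_k$ and $e_\ell$ with the elements of $G$ supported in the $k$-th and $\ell$-th factor and applying Remark~\ref{rk:assoc-2} (equivalently, iterating~\eqref{eqn:internal}) gives $e_\ell e_k = Z^{d(e_\ell)d(e_k)} e_k e_\ell = Z\, e_k e_\ell$ for $k < \ell$, hence $e_k e_\ell = Z\, e_\ell e_k$. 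Conversely, in any word in the $e_i$'s and $Z$, relation (c) lets one bubble-sort the $e_i$'s into nondecreasing index order (each transposition contributing one factor $Z$), (b) erases repeated letters (each squared letter contributing a factor $t_i \in \{ e_0, Z \}$), and (d) pushes all $Z$'s to the front; thus every element reduces to $Z^\varepsilon e_A$ with $\varepsilon \in \{0,1\}$, $A \in \cP(\sfn)$ and $e_A = e_{a_1}\cdots e_{a_k}$ as in the statement. Applied to the abstract group presented by (a)--(d), this shows it has at most $2^{n+1}$ elements; since it surjects onto $G$ (the relations hold in $G$, and $G = \langle e_1,\ldots,e_n,Z\rangle$ by Remark~\ref{rk:internal}) and $|G| = 2^{n+1}$ by (1), the surjection is an isomorphism --- which proves (5). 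Carried out inside $G$ itself, the same reduction shows $G = \{ Z^\varepsilon e_A \mid \varepsilon \in \{0,1\},\ A \in \cP(\sfn) \}$, and the equality $2^{n+1} = |G|$ forces these $2^{n+1}$ expressions to be pairwise distinct --- the disjoint union of (3). Finally, running the bubble-sort on $e_A e_B$ yields $e_A e_B = Z^{\tau(A,B)} e_{A \Delta B}$, where $\tau(A,B)$ equals, mod $2$, the number of inversions between $A$ and $B$ plus $\#\{\, i \in A \cap B \mid t_i = Z \,\}$ --- the announced cocycle, an instance of the ``Fehlstand'' combinatorics of the introduction.

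Items (2) and (4) then follow. The subgroup $\{1, Z\}$ is central, hence normal, of order $2$, so $|G^0| = 2^n$; the map $\cP(\sfn) \to G^0$, $A \mapsto \ol{e_A}$, is a homomorphism (by $\ol{e_A}\,\ol{e_B} = \ol{e_{A\Delta B}}$, from the cocycle relation) and is onto (by generation), hence an isomorphism of groups of order $2^n$, which gives the exact sequence of (2). For the orders: each $g \in G$ equals $Z^\varepsilon e_A$, and since $Z$ is central, $g^2 = (e_A)^2 = Z^{\tau(A,A)} \in \{1, Z\}$, so $g^4 = 1$. For (4): again because $Z$ is central, the commutator of $Z^\varepsilon e_A$ with $Z^{\varepsilon'} e_B$ equals $e_A e_B (e_B e_A)^{-1} = Z^{\tau(A,B) + \tau(B,A)} \in \{1, Z\}$; hence $[G,G] \subseteq \{1, Z\} \subseteq Z(G)$, and therefore $[G,[G,G]] = \{1\}$.

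The only delicate point is the cardinality bookkeeping: the normal-form reduction must be run twice --- once abstractly, to bound the presented group by $2^{n+1}$, and once inside $G$, to identify the $2^{n+1}$ normal forms with the elements of $G$ --- with item (1) supplying the matching count that both glues the two together and rules out circularity between (1), (3) and (5). The sorting computation underlying $\tau$ is routine (it is precisely the inversion count already flagged in the introduction), but should be organized so that these logical dependencies stay transparent.
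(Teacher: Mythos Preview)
Your proof is correct and rests on the same ingredients as the paper's --- the internal description of the iterated graded product (Remark~\ref{rk:internal}, equation~\eqref{eqn:internal}), the two-element product formula of Remark~\ref{rk:assoc-2}, and a cardinality match. The organization differs slightly: the paper proves (1)--(3) together by induction on $n$ via the recursion $Q(t_1,\ldots,t_{n+1}) = Q(\ttt) \widehat\times_Z Q(t_{n+1})$, and handles (5) by an inductive decomposition (a group satisfying the relations at level $n{+}1$ is shown to be a homomorphic image of $G_n \widehat\times_Z H$ for a four-element $H$, hence equal to it by cardinality), whereas you compute $|G|$ directly from the freeness of the antidiagonal action and establish (5) by a global normal-form (bubble-sort) argument bounding the presented group by $2^{n+1}$. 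Your route is a bit more self-contained and makes the cocycle $\tau$ explicit in passing; the paper's inductive route stays closer to the monoidal-category framework. For (4), the paper argues via the quotient map to $\cP(\sfn)$ (commutators land in the kernel $\{1,Z\}$), which is the same fact you obtain by computing $e_Ae_B(e_Be_A)^{-1}$ with the cocycle.
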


\begin{proof}
(1), (2), (3) are essentially contained in Theorem \ref{th:graded-group} and
Remark \ref{rk:assoc-2}. 
More formally, these items are proved by
induction: for $n=1$ and $n=2$, this has been noticed above.
Assume the claim already proved at level $n \in \N$, and write
\begin{equation}\label{eqn:recursion}
Q(t_1,\ldots,t_{n+1}) = Q(\ttt) \widehat \times Q(t_{n+1}) ,
\end{equation}
and use ``internal notation'' (\ref{eqn:internal}) together with
induction hypothesis to write elements of this group as claimed.
Concerning the product of two elements,
$e_A e_B = e_{a_1} \cdots e_{a_k}e_{b_1} \cdots e_{b_\ell}$,
re-order elements in ascending order of indices; when two indices coincide,
the square yields either $1$ or $Z$; only terms with indices from
the symmetric difference survive, together with a term $Z^u$ with
$u=0$ or $u=1$.
It follows that $(e_A)^4 = 1$ and $(Z e_A)^4 = 1$, for all
$A \in \cP(\sfn)$.

(4) 
The morphism from item (2) sends the commutator
group $[G,G]$ to $[\cP(\sfn),\cP(\sfn)] = 0$ since
$\cP(\sfn)$ is abelian. Therefore
$[G,G]\subset \{ 1,Z \}$, and hence $[G,G]$ belongs to the center of $G$. 
(See Lemma \ref{la:AB} for an explicit formula describing commutators.) 

(5) Relations (a) -- (d) are contained in the preceding items. 
Conversely, given a group having generators satisfying these 
relations, it is seen by induction that it is isomorphic to $Q(\ttt)$:
for $n=1$ and $t_1 =1$, we have the defining relations of $C_2 \times C_2$,
and for $n=1$ and $t_1 = Z$, we have the defining relations of $C_4$;
likewise,
for $n=2$, we have the defining relations of $D_4$, resp., of $Q$.
For the induction step, assume the claim holds at order $n$, and let
$G_{n+1}$ be a group having generators and relations of the given form at
order $n+1$. Then $H:=\{ 1, e_{n+1} , Z , Z e_{n+1} \}$ is a subgroup,
isomorphic to $C_4$ if $t_{n+1} = Z$ and to $C_2^2$ if $t_{n+1}=1$,
and the given relations show that $G$ is a homomorphic image
of $G_n \widehat \times_Z H$. For reasons of cardinality, $G$ is actually
equal to this group, i.e., given by 
(\ref{eqn:recursion}).
\end{proof}

\begin{remark}
Following the pattern of ``classification of real Clifford algebras'', we can now
``classify'' the groups $Q_{p,q}$. In our opinion, this gives a transparent and
conceptual version of that part of Clifford theory.
Since, in the main text, we are rather interested in classification-free theory,
we relegate the presentation of this issue to Appendix 
\ref{app:classification}.
\end{remark} 

\begin{remark}
The function $\tau$ is the cocycle of a central extension.
See Section  \ref{sec:cocyle} for more on this;
cf.\ also \cite{L} p.12--14.
 \end{remark}

Next, we compute an explicit formula for the inner automorphism given by $e_A$:

\begin{lemma}\label{la:AB}
For all $A,B \in \cP(\sfn)$,
$$
e_A \cdot e_B \cdot (e_A)^{-1} = Z^{\vert A \vert \cdot \vert B \vert - \vert A \cap B \vert}
\, e_B .
$$
\end{lemma}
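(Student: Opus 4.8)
The plan is to reduce the computation of the inner automorphism $e_A \cdot e_B \cdot (e_A)^{-1}$ to a count of how many times the ``commutation relation'' (c) from Theorem \ref{th:shuffle}(5) is invoked when sliding $e_A$ past $e_B$. Concretely, I would first observe that since $Z$ is central and of order $2$, and since $e_A e_B = Z^{\tau(A,B)} e_{A \Delta B}$, it suffices to show $e_A e_B = Z^{\vert A \vert \cdot \vert B \vert - \vert A \cap B \vert} e_B e_A$, i.e. that $\tau(A,B) + \tau(B,A) = \vert A\vert \cdot \vert B \vert - \vert A \cap B\vert \pmod 2$; then conjugating by $(e_A)^{-1}$ (which differs from $e_A$ only by a central factor and the scalars $t_i$, both of which cancel in the conjugation) gives the stated formula.

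The key step is to compute the parity of the number of transpositions needed to move the block $e_A = e_{a_1}\cdots e_{a_k}$ through the block $e_B = e_{b_1}\cdots e_{b_\ell}$. Each time an index $a_i$ is moved past an index $b_j$ with $a_i \neq b_j$, relation (c) produces one factor of $Z$ (regardless of which of $a_i, b_j$ is larger, since $e_ke_\ell = Ze_\ell e_k$ for $k<\ell$ is symmetric in the sense that passing either way costs one $Z$). When $a_i = b_j$, the two equal generators meet and $e_{a_i}^2 = t_{a_i} e_0$ is central, contributing no $Z$. So the total exponent of $Z$ picked up in rewriting $e_A e_B$ as a reordered word equals the number of pairs $(a_i, b_j)$ with $a_i \neq b_j$, which is exactly $\vert A \vert \cdot \vert B \vert - \vert A \cap B \vert$. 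After all cancellations of squares, the surviving generators are those in $A \Delta B$ (in ascending order), so $e_A e_B = Z^{\vert A\vert\vert B\vert - \vert A\cap B\vert} \cdot (\text{scalar in } t_i) \cdot e_{B}' e_{A}'$ where the primed words are $e_B$, $e_A$ up to reordering-and-squaring; running the same bookkeeping in the opposite order shows $e_B e_A$ carries the identical scalar and the identical $Z$-count relative to the fully reduced form, hence $e_Ae_B$ and $Z^{\vert A\vert\vert B\vert - \vert A\cap B\vert}e_Be_A$ agree.

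A cleaner way to organize the same idea, which I would likely use to avoid tracking the $t_i$ scalars by hand, is to work in the quotient-free setup of Remark \ref{rk:internal}: by \eqref{eqn:internal} we have $g_2 g_1 = Z^{d(g_2)d(g_1)} g_1 g_2$ for homogeneous $g_1, g_2$, and more generally for the generators $e_k e_\ell = Z e_\ell e_k$ when $k \neq \ell$. Then $e_A e_B e_A^{-1}$: pull $e_A^{-1} = Z^{(\ast)} e_{a_k}^{-1}\cdots e_{a_1}^{-1}$ to the left through $e_B$, picking up one $Z$ for each pair $(a_i,b_j)$ with $a_i \notin B$ and no $Z$ (the central square) for each $a_i \in B$; this gives a total of $\vert A \vert(\vert B\vert) - \vert A \cap B\vert \cdot (\text{something})$ — and here is the one point needing care — when $a_i \in B$, $e_{a_i}$ must still pass all the other $b_j$'s, so the right count is: exponent $= \sum_{a_i \in A} \#\{b_j \in B : b_j \neq a_i\} = \vert A\vert\vert B\vert - \vert A\cap B\vert$. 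After this, $e_A$ and $e_A^{-1}$ have been brought adjacent and cancel, leaving exactly $Z^{\vert A\vert\vert B\vert - \vert A\cap B\vert} e_B$.

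The main obstacle I anticipate is purely bookkeeping: making sure the sign/parity count is done once and correctly, in particular handling the diagonal pairs $a_i = b_j$ (where a square is created) consistently and confirming they contribute $0$ to the $Z$-exponent while the $k-1$ or $\ell-1$ other crossings of that same generator still each contribute; and checking that the $t_i$-scalars produced by those squares are identical on both sides of the claimed identity so they cancel. An induction on $\vert A \vert$ (peeling off the largest element $a_k$, using the single-generator commutation $e_{a_k} e_B e_{a_k}^{-1} = Z^{\vert B\vert - [a_k \in B]} e_B$ as the base case, which itself follows directly from (b) and (c)) packages this cleanly and is probably the least error-prone route; I would present it that way.
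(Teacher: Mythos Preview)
Your proposal is correct and follows essentially the same route as the paper: the paper also reduces to the single-generator case $e_i e_B e_i^{-1} = Z^{|B| - [i\in B]} e_B$ and then composes the conjugations by $e_{a_1},\ldots,e_{a_k}$, arriving at the count $p = |\{(i,j)\in A\times B : i\neq j\}| = |A||B| - |A\cap B|$. Your final ``induction on $|A|$'' packaging is exactly the paper's argument; the earlier commutator framing $e_A e_B = Z^{|A||B|-|A\cap B|} e_B e_A$ is an equivalent reformulation (just multiply on the right by $e_A^{-1}$), and your transposition count for it is also correct.
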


\begin{proof}
When $\vert A \vert = 1 = \vert B \vert$, then the claim is in keeping with
the defining relations of $Q(\ttt)$, namely
$e_i e_j (e_i)^{-1} = u e_j$, with $u=1$ if $i=j$, and $u=Z$ else.
Now let $\vert A \vert = 1$, say, $A = \{ i \}$, and $B$ arbitrary.
When $i \notin B$, then we have to exchange the position of $e_i$
and each $e_j$, $j \in B$, which gives a factor $Z^{\vert B \vert}$,
and when $i \in B$, we get a factor $Z^{\vert B \vert - 1 }$ (since
we exchange for all $i \in B$ with $i\not= j$), which again is in keeping
with the claim. For general $A$ and $B$, conjugation by $e_A =
e_{a_1} \cdots e_{a_k}$ is composition  of conjugation by the $e_{a_i}$.
We count the total number of exchanges of elements
$e_i$ and $e_j$ arising in the whole procedure: whenever $(i,j) \in
A \times B$ with $i \not=j$, we get an additional factor $Z$, so in the end
we get a factor $Z^p$ with
$$
p = \vert \{ (i,j) \in A \times B \mid i \not= j \} \vert 
= \vert A \times B \setminus {\rm diag} \vert
= \vert A \vert \cdot \vert B \vert - \vert A \cap B \vert ,
$$
as claimed.
\end{proof}

\begin{example}\label{ex:pseudo}
For the {\em pseudoscalar} (case $A = \sfn$, $A \cap B = B$),
$e_\sfn = e_1 \cdots e_n$,
we get:
$$
e_\sfn e_B (e_\sfn)^{-1} = Z^{(n-1) \vert B \vert} e_B ,
$$
which is equal to $e_B$ if $n$ is odd, and equal to
$Z^{\vert B \vert} e_B$ if $n$ is even.
\end{example}

\begin{theorem}[Center, inner automorphisms]\label{th:center} 
Let $\ttt \in \{  1,Z \}^n$ and $G:=Q(\ttt)$.
Assume $n$ is even. Then:
\begin{enumerate}
\item
the center of $G$ is the subgroup
$\{ 1, Z \}$, 
\item
 there are $2^n + 1$ conjugacy classes:
two of them are singletons, $\{ 1 \}$ and $\{ Z \}$, and the
other are of cardinal two, $\{ e_A, Z e_A \}$, for
$A \not= \emptyset$,
\item
the inner automorphism group
of $Q(\ttt)$ is isomorphic to $\cP(\sfn) \cong (\Z / 2 \Z)^n$,
\item
the grading automorphism $\alpha(e_A)=Z^{\vert A \vert} e_A$ 
is an inner automorphism. 
\end{enumerate}
Assume $n$ is odd. Then:
\begin{enumerate}
\item
the center of $G$ is the subgroup
$\{ 1, Z, e_\sfn, Z e_\sfn \}$ (when $\ttt = (1,\ldots,1)$, it is
isomorphic to $C_4$ if $n\equiv 1 \mod 4$, and to
$C_2^2$ if $n \equiv 3 \mod 4$), 
\item
there are
$2^n + 2$ conjugacy classes, four of them singletons (elements
of the center), the other $2^n - 2$ classes 
of the
form $\{ e_A, Z e_A \}$,  $A \not= \emptyset, A \not= \sfn$,
\item
the inner automorphism group  is isomorphic to
$\cP(\sfn - 1)  \cong (\Z / 2 \Z)^{n-1}$,
\item
the grading automorphism $\alpha$ is an outer automorphism.
\end{enumerate}
In both cases, $\alpha$, together with the inner automorphisms,
generates
a subgroup of automorphisms isomorphic to $\cP(\sfn) \cong (C_2)^n$.
\end{theorem}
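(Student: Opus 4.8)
\emph{Overview.} Everything will follow from Lemma \ref{la:AB} together with the structural facts already recorded in Theorem \ref{th:shuffle}: the central extension $\{1,Z\}\to G\to(\cP(\sfn),\Delta)$, the equality $|G|=2^{n+1}$, the splitting $G=\{e_A\mid A\in\cP(\sfn)\}\sqcup\{Ze_A\mid A\in\cP(\sfn)\}$, and $[G,G]\subseteq\{1,Z\}$. The first step is a \emph{centrality criterion}: since $Z$ is central, $Z^\eps e_A$ lies in $Z(G)$ iff $e_A$ does, and by Lemma \ref{la:AB} with $B=\{i\}$ one has $e_A\,e_i\,(e_A)^{-1}=Z^{\,|A|-\mathbf 1_{i\in A}}\,e_i$, so $e_A\in Z(G)$ iff $|A|\equiv\mathbf 1_{i\in A}\pmod 2$ for every $i\in\sfn$. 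If $A$ is a proper nonempty subset, choosing $i\in A$ and $j\in\sfn\setminus A$ forces $|A|$ to be at once odd and even; hence $e_A$ is central exactly when $A=\emptyset$, or $A=\sfn$ and $n$ is odd. This is item (1) in both parity cases. For the isomorphism type of the center when $n$ is odd and $\ttt=(1,\dots,1)$, I would compute $e_\sfn^2$ in normal form: applying $e_ke_\ell=Ze_\ell e_k$ ($k<\ell$) to $(e_1\cdots e_n)^2$ yields a factor $Z$ for each of the $\binom n2$ transpositions, so $e_\sfn^2=Z^{\binom n2}\prod_i t_i=Z^{\binom n2}$, and the parity of $\binom n2$ (equivalently $n$ mod $4$) decides whether $\langle Z,e_\sfn\rangle$ is cyclic of order $4$ or a Klein four-group.

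\emph{Conjugacy classes and inner automorphism group (items (2), (3)).} By Lemma \ref{la:AB} and centrality of $Z$, the conjugacy class of $e_A$ is contained in $\{e_A,Ze_A\}$; it is a singleton iff $e_A\in Z(G)$, and otherwise equals $\{e_A,Ze_A\}$ (so then $e_A$ and $Ze_A$ are conjugate, and the class of $Ze_A$ is the same). Hence $G$ has $|Z(G)|$ singleton classes and $\tfrac12\bigl(2^{n+1}-|Z(G)|\bigr)$ two-element classes, i.e.\ $2^n+\tfrac12|Z(G)|$ in all, which gives $2^n+1$ for $n$ even and $2^n+2$ for $n$ odd, the doubletons being the $\{e_A,Ze_A\}$ with $A\ne\emptyset$ (and $A\ne\sfn$ when $n$ is odd). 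For item (3), the inner automorphism group is $G/Z(G)$: for $n$ even this is $G/\{1,Z\}\cong\cP(\sfn)\cong(\Z/2\Z)^n$ by Theorem \ref{th:shuffle}(2); for $n$ odd, $Z(G)/\{1,Z\}$ is the order-$2$ subgroup of $\cP(\sfn)$ generated by $\sfn$, so $G/Z(G)\cong\cP(\sfn)/\langle\sfn\rangle\cong(\Z/2\Z)^{n-1}$, which one identifies with $\cP(\sfn-1)$.

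\emph{The grading automorphism (item (4)).} Example \ref{ex:pseudo} gives $e_\sfn\,e_B\,(e_\sfn)^{-1}=Z^{(n-1)|B|}e_B$. When $n$ is even, $(n-1)|B|\equiv|B|\pmod 2$, so this equals $Z^{|B|}e_B=\alpha(e_B)$, and (using $\alpha(Z)=Z$) conjugation by $e_\sfn$ agrees with $\alpha$ on all of $G$; thus $\alpha$ is inner. When $n$ is odd, if $\alpha$ were inner, say $\alpha=\mathrm{Inn}(Z^\eps e_B)$, then evaluating on each $e_i$ via Lemma \ref{la:AB} would force $|B|-\mathbf 1_{i\in B}\equiv 1\pmod 2$ for all $i$, which the same dichotomy as before makes impossible; so $\alpha$ is outer.

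\emph{The concluding statement, and the main point.} Since $\alpha(g)=Z^{d(g)}g$, one has $\alpha^2=\mathrm{id}$, and $\alpha$ commutes with every inner automorphism because $\alpha\circ\mathrm{Inn}(g)\circ\alpha^{-1}=\mathrm{Inn}(\alpha(g))=\mathrm{Inn}(g)$, the element $\alpha(g)\,g^{-1}=Z^{d(g)}$ being central. Therefore $\langle\mathrm{Inn}(G),\alpha\rangle$ is an abelian group of exponent $2$: for $n$ even it equals $\mathrm{Inn}(G)\cong(C_2)^n$ (as $\alpha$ is inner), and for $n$ odd, where $\alpha\notin\mathrm{Inn}(G)$, it is $\mathrm{Inn}(G)\times\langle\alpha\rangle\cong(C_2)^{n-1}\times C_2\cong(C_2)^n$; in either case it is $\cP(\sfn)\cong(C_2)^n$. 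The only delicate point in the whole argument is the exponent bookkeeping modulo $2$ — the behaviour of $|A||B|-|A\cap B|$ and the value of $\binom n2\bmod 2$; apart from the single genuinely new computation of $e_\sfn^2$, each assertion is a one-line consequence of Lemma \ref{la:AB} and the exact sequence, so I do not expect a serious obstacle.
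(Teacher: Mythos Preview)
Your proof is correct and follows essentially the same route as the paper's: both derive the center by testing $e_A$ against the generators $e_i$ via Lemma \ref{la:AB}, read off the conjugacy classes from the inclusion of each class in $\{e_A,Ze_A\}$, obtain $\mathrm{Inn}(G)=G/Z(G)$ from the exact sequence of Theorem \ref{th:shuffle}, and settle the inner/outer nature of $\alpha$ using Example \ref{ex:pseudo} for $n$ even and the same parity obstruction for $n$ odd. Your treatment of the final sentence (showing $\alpha$ commutes with every inner automorphism and counting the resulting elementary abelian $2$-group) is in fact more explicit than what the paper writes out.
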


\begin{proof}
From Lemma \ref{la:AB}, $e_A$ commutes with all $e_i$ iff 
$\vert A \vert - \vert A \cap \{ i \}$ is even, for all $i$, that is, 
for all $i$,
the set $A \setminus \{ i  \}$ has an even number of elements.
The only possibility to realize this case is  $A =\sfn = \{ 1,\ldots , n\}$,
where $n$ is odd. 
Since $Z$ is always central, and $e_\sfn^2 = Z^{\frac{n(n-1)}{2}}$,
 this proves the statements about
the center.

Concerning conjugation classes, 
Lemma \ref{la:AB} shows that that each class has at most $2$ elements,
say $e_B$ and $Z e_B$.
It has $1$ element iff the element is central, and $2$ elements else.
Thus the claims follow from those about the center.
Likewise, those on the inner automorphism group $G/Z(G)$ follow
from those on the center.

The grading automorphism from Lemma \ref{la:grading-auto}
satisfies the condition $\alpha(e_k)= Z^{d(e_k)} e_k =
Z e_k$, whence 
$\alpha(e_A) = Z^{\vert A \vert} e_A$.
As seen in Example \ref{ex:pseudo}, when $n$ is even, it is inner.
When $n$ is odd, then there is no set $A \in \cP(\sfn)$ such that
$e_A e_k (e_A)^{-1} = Z e_k $ for all $k=1,\ldots,n$ 
(since $A = \sfn$ does not satisfy the condition, and when
$A \not= \sfn$, the value of $e_A e_k (e_A)^{-1} $
depends whether $k \in A$ or $k \notin A$).
Thus the grading automorphism then is outer.
\end{proof}

In general, the group of {\em all} automorphisms of $Q(\ttt)$ is not
a suitable object, for our purposes. We are interested in the
case that all $t_i$ are equal, because then permutations of the $e_i$
induce automorphisms. But in this case, the group of all automorphisms
may be too big, as is illustrated by the following
examples: 
\begin{example}\label{ex:aut-1}
Let $n=1$, $G=Q(t_1) = \{ e_0,e_1,Z,Ze_1\}$,
$e_1^2 = t_1$.
\begin{enumerate}
\item
When $t_1 = Z$, so $G \cong C_4$, then
$\Aut(G) \cong C_2$ (the unique non-trivial
automorphism exchanges the two elements of order $4$,
namely $e_1$ and $Z e_1$).
\item
When $t_1 = 1$, so $G \cong C_2^2$, then
$\Aut(G) \cong \s_3$ is the permutation group of the three elements
of order $2$, namely of $e_1,Z,Ze_1$.
\end{enumerate}
\end{example}

\begin{example}\label{ex:aut-2}
Let $n=2$, $G = Q(t_1,t_2) = \{ e_0,e_1,e_2,e_{12},Z,
Z e_1, Z e_2,Z e_{12} \}$,
$e_i^2 = t_i$.

\begin{enumerate}
\item
When $t_1 = 1 = t_2$, then $\Aut(G) \cong D_4 \cong G$.
Conjugation by elements $e_1,e_2,e_{12}$ defines three non-trivial
inner automorphisms of order $2$, and exchange of $e_1$ and $e_2$
yields an outer automorphism of order $2$. 
Conjugation by $e_{12}$ commutes with all automorphisms. 
\item
When $t_1 = Z = t_2$, then $G = Q$ and $\Aut(G) \cong \s_4$.
Namely, the inner automorphisms form a Klein $4$-group as above,
and permutations of $i,j,k$ (that is, of $e_1,e_2,e_{12}$),
form a complementary $\s_3$-subgroup.
However, exchange of $e_1$ and $e_{12}$ does not preserve the
grading (since $e_1$ is odd and $e_{12}$ even).
The subgroup of grading-preserving automorphisms is again a
$D_4$-subgroup of $\s_4$, having same types of elements as
in (1). 
\end{enumerate}
\end{example}


\begin{theorem}[Hyperoctahedral automorphism group]\label{th:Bn}
The symmetric group $\s_n$ acts by automorphisms on $Q_n$: 
for every $\sigma \in \s_n$, there is a unique automorphism
$$
\Phi_\sigma : Q_n\to Q_n
\mbox{ such that } \forall k=1,\ldots,n : \,
\Phi_\sigma (e_k) = e_{\sigma(k)} .
$$
This group of automorphisms normalizes the group $(C_2)^n$
described in Theorem \ref{th:center} , and together they form
a group $B_n$ of grading-preserving
automorphisms of cardinality $n! 2^n$.
The grading automorphism $\alpha$ belongs to the center of this
group.
The same statements hold for the group $Q_{0,n}$.
\end{theorem}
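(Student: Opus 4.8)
The plan is to exhibit $B_n$ explicitly as a semidirect product $D\rtimes\s_n$, where $D$ is the elementary abelian group of ``diagonal'' automorphisms of $Q_n$ (those fixing $Z$ and multiplying each $e_A$ by a power of $Z$), and then to recognise this as the hyperoctahedral (signed permutation) group of rank $n$.

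First I would construct the $\s_n$-action. By Theorem \ref{prop:relations}(5), $Q_n$ is the group presented by the generators $e_1,\dots,e_n,Z$ together with the relations listed there, and the key observation is that this presentation is invariant under any permutation of the indices. The only relation needing attention is (c): since $Z^2=1$, ``$e_ke_\ell=Ze_\ell e_k$ for $k<\ell$'' is equivalent to the manifestly symmetric relation ``$e_ae_b=Ze_be_a$ for all $a\ne b$''. Hence, by the universal property of a presentation, each $\sigma\in\s_n$ determines an endomorphism $\Phi_\sigma$ of $Q_n$ with $\Phi_\sigma(e_k)=e_{\sigma(k)}$ and $\Phi_\sigma(Z)=Z$; since $\Phi_{\sigma^{-1}}$ is a two-sided inverse, $\Phi_\sigma\in\Aut(Q_n)$, and since $Q_n=\langle e_1,\dots,e_n\rangle$ for $n\ge 2$ (one has $Z=e_ke_\ell e_ke_\ell$), it is the unique automorphism with the stated effect on the $e_k$. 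Thus $\sigma\mapsto\Phi_\sigma$ is an injective group homomorphism $\s_n\hookrightarrow\Aut(Q_n)$.

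Next I would identify the subgroup generated by $\alpha$ and the inner automorphisms, which by Theorem \ref{th:center} is isomorphic to $(C_2)^n$, with the group $D$. For each $\phi\in\Hom(\cP(\sfn),\Z/2\Z)$ the assignment $e_k\mapsto Z^{\phi(\{k\})}e_k$ is compatible with all of the relations above (trivially, since $Z^{2\phi(\{k\})}=1$), hence defines an involutive automorphism $\psi_\phi$ with $\psi_\phi(e_A)=Z^{\phi(A)}e_A$; these $\psi_\phi$ form a subgroup $D\cong\Hom(\cP(\sfn),\Z/2\Z)\cong(\Z/2\Z)^n$ of order $2^n$. By Lemma \ref{la:AB}, conjugation by $e_C$ equals $\psi_{\phi_C}$ with $\phi_C(\{i\})\equiv|C|-|C\cap\{i\}|\pmod 2$, and the grading automorphism is $\alpha=\psi_{\phi_0}$ with $\phi_0\equiv 1$; hence the group of Theorem \ref{th:center} is contained in $D$, and having the same order $2^n$, coincides with it.

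Finally I would assemble $B_n$ and verify that $\alpha$ is central. A direct computation on the generators $e_k$ gives $\Phi_\sigma\circ\psi_\phi\circ\Phi_\sigma^{-1}=\psi_{\sigma\cdot\phi}$, where $\s_n$ acts on $\Hom(\cP(\sfn),\Z/2\Z)\cong(\Z/2\Z)^n$ by permuting coordinates; so $\Phi(\s_n)$ normalises $D$, and $B_n:=D\cdot\Phi(\s_n)$ is a subgroup of $\Aut(Q_n)$ consisting of grading-preserving automorphisms (each $\psi_\phi$ and $\Phi_\sigma$ fixes $Z$ and sends the odd generators to odd elements). Since $e_{\sigma(k)}\notin\{e_k,Ze_k\}$ whenever $\sigma(k)\ne k$, no nontrivial $\Phi_\sigma$ is diagonal, so $D\cap\Phi(\s_n)=\{\id\}$ and $B_n=D\rtimes\s_n$ has order $n!\,2^n$ --- which is exactly the hyperoctahedral group $B_n$. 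Centrality of $\alpha$ is then immediate: $\phi_0$ is fixed by the coordinate action, so $\Phi_\sigma\alpha\Phi_\sigma^{-1}=\psi_{\phi_0}=\alpha$, while $\alpha$ commutes with the abelian group $D$. The statement for $Q_{0,n}=Q(Z,\dots,Z)$ requires no new argument: replacing $e_i^2=1$ by $e_i^2=Z$ keeps the presentation symmetric in the indices and keeps each $\psi_\phi$ a homomorphism, and Theorems \ref{prop:relations} and \ref{th:center} are already stated for arbitrary $\ttt$. I expect no serious obstacle; the only points needing care are the exhaustion $\langle\alpha,\text{inner}\rangle=D$ in the third step (which rests on importing the order count $2^n$ from Theorem \ref{th:center}) and keeping straight that $D\rtimes\s_n$ is a semidirect, not direct, product.
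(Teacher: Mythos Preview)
Your proof is correct and complete. It differs from the paper's argument in a worthwhile way, so let me briefly contrast the two.

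The paper constructs the $\s_n$-action \emph{inductively}, using the graded-product decomposition $Q_{n+1}=Q_n\,\widehat\times_Z\,Q_1=Q_{n-1}\,\widehat\times_Z\,Q_2$: the induction hypothesis supplies $\s_n$ acting on the first factor, and the braiding isomorphism of Theorem~\ref{th:graded-group}(4) supplies the adjacent transposition $(n,n{+}1)$; together these generate $\s_{n+1}$. This keeps the argument inside the braided-monoidal framework of the paper and makes the origin of the $\s_n$-action (namely, the braiding of the category) transparent.

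You instead appeal directly to the presentation of $Q_n$ from Theorem~\ref{prop:relations}(5): after symmetrising relation~(c) via $Z^2=1$, the presentation is visibly invariant under index permutations, so the universal property produces all $\Phi_\sigma$ at once. You also make the ``diagonal'' subgroup $D\cong\Hom(\cP(\sfn),\Z/2\Z)$ explicit and identify it with the group of Theorem~\ref{th:center} by a cardinality match, whereas the paper leaves this identification implicit. Your route is more elementary and self-contained (it does not invoke the braiding machinery), and it yields the semidirect-product structure $B_n=D\rtimes\s_n$ with the hyperoctahedral description cleanly; the paper's route, on the other hand, is better aligned with its overall categorical narrative. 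Both arrive at the same conclusions, including the centrality of $\alpha$ (which you deduce from the $\s_n$-invariance of the constant homomorphism $\phi_0$, and the paper from the evident commutation of $\alpha$ with each $\Phi_\sigma$).
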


\begin{proof}
By induction:
for $n=1$ the statement is uninteresting;
for $n=2$, the braiding automorphism the unique
automorphism
$$
\tau: Q_2 = Q_1 \widehat \times_Z Q_1 \to Q_2, \quad
[g_1,g_2] \mapsto Z^{d(g_1)d(g_2)} [g_2,g_1]
$$
such that
$\tau(e_1) = e_2, \quad
\tau(e_2)=e_1, \quad
\tau(Z)= Z, \quad
\tau(e_{12}) = Z e_{12}
$
(cf.\ also Example \ref{ex:aut-2}).
Assuming that $\s_n$ acts by automorphisms on $Q_n$, we write
$$
Q_{n+1} = Q_n  \widehat \times Q_1 = 
Q_{n-1} \widehat \times Q_2 .
$$
Then $\s_n = \s_n \times \id_{Q_1}$ acts on $Q_{n+1}$, and another
copy of $\s_2 = \id_{Q_{n-1}}$ acts on $Q_{n+1}$
(transposition $(n \, n+1)$). Together, these two actions
generate the action of $\s_{n+1}$.
Similarly for 
$Q_{0,n} = Q_{0,1}\widehat \times \ldots \widehat \times Q_{0,1}$.

It is clear that the action of $\s_n$ normalizes the inner automorphisms
as well as the grading automorphism $\alpha$, hence the group
$(\Z / 2 \Z)^n$ is a normal subgroup, intersecting trivially the
$\s_n$-subgroup. Thus $B_n$ is a semidirect product, of cardinality
$n! 2^n$. 
Clearly, $\alpha$ commutes with the $\s_n$-action, hence belongs
to the center of $B_n$.
\end{proof}

\begin{remark}
Recall that, by definition, the 
\href{https://en.wikipedia.org/wiki/Hyperoctahedral_group}{hyperoctahedral
group} is the group of {\em signed permutation matrices} (of size
$n \times n$). It is Coxeter group of type $B_n$,
generated by the usual permutation matrices (subgroup
$\s_n$) and the diagonal $\pm 1$-matrices (subgroup $C_2^n$).
It is isomorphic to the automorphism group defined in the preceding
theorem.
Its center is equal to $\{ \id , \alpha \}$. 
\end{remark}

\begin{definition}
We call the group $B_n$ of automorphisms defined in the preceding theorem
the {\em hyperoctahedral automorphism group} of $Q_n$, resp.\ of
$Q_{0,n}$.
\end{definition}

\begin{remark}
The whole group of grading preserving automorphisms is in general
bigger than the hyperoctahedral automorphism group.
\end{remark}

\begin{remark}
For $Q_{p,q}$, the symmetry group $\s_n$ must be replaced by
$\s_p \times \s_q$, permuting the $e_i$'s with $t_i = 1$, resp.\ those
with $t_i = Z$.
\end{remark}

\begin{remark}[Inversions] 
In the Clifford category, all algebraic operations can be described by explicit formulae.
From an algebraic point of view, this amounts to describe structures (central extensions)
by {\em cocycles}, and from a combinatorial point of view, the main tool is given by
{\em inversions}. 
For instance, to compute a formula for the automorphism $\Phi_\sigma$ from Theorem \ref{th:Bn},
we start by writing  $A =\{ a_1,\ldots,a_k\}$ with 
$a_1 <\ldots < a_k$, and
$$
\Phi_\sigma(e_A) = \Phi_\sigma(e_{a_1}\cdots e_{a_k})=
e_{\sigma(a_1)} \cdots e_{\sigma(a_k)}.
$$
We have to re-order these terms so that the indices are in increasing
order. The number of terms $Z$ appearing is
$u:=\vert (i,j)\in A \times A \mid \, i<j, \sigma(i) > \sigma(j) \vert$, so
we end up with the formula of ``inversion type''
$$
\Phi_\sigma(e_A) = Z^{(u \mod 2)}  e_{\sigma(A)} . 
$$
\end{remark}

\subsection{Cocycles}\label{sec:cocyle}
Recall that the 
 \href{https://ncatlab.org/nlab/show/group+extension}{central extension}
of a group $H$ by an abelian group $A$ can be described by
a cocyle $C : H \times H \to A$.
In our case, by  Theorem \ref{th:shuffle} we have the central extension
 $A = \{ 1 , Z \} \to Q(\ttt) \to H = \cP(\sfn)$, with cocycle denoted by
 $\tau$, defined by the rule
$$
e_A e_B = \tau(A,B) e_{A \Delta B}.
$$
Expanding both sides of
$(e_A e_B) e_C = e_A (e_B e_C)$,
we get
the {\em cocycle relations}:
\begin{equation}\label{eqn:cocycle}
\tau(A,B) \tau(A\Delta B, C) = \tau(A,B\Delta C) \tau(B,C).
\end{equation}
Since our cocyle $\tau$ depends on $\ttt$ we call it a
``relative cocycle''. We shall separate it into two parts, the first of which
contains the explicit dependence on $\ttt$, and the second being 
independent of $\ttt$ (``absolute'').

\begin{definition}
Let $K$ be a commutative monoid,
$\ttt \in K^n$ and $A \in \cP(\sfn)$. We let
$$
\ttt_\emptyset := 1, \qquad 
\ttt_A := \prod_{a \in A} t_a  \in K.
$$
\end{definition}

\nin
The map $A \mapsto \ttt_A$ behaves like a ``multiplicative measure'':
when 
$A \cap B = \emptyset$, then $\ttt_{A \cup  B} = \ttt_A \cdot \ttt_B$,
and, for general $A,B$,
$$
\ttt_A \ttt_B = 
\ttt_{A \cap B} \ttt_{A \cup B} = \ttt_{A\cap B}^2 \ttt_{A \setminus B}
\ttt_{B \setminus A} = \ttt_{A\cap B}^2 \ttt_{A \Delta B} . 
$$
If $k^2 = 1$ for all $k \in K$ (which is the case for $K = \{ 1,Z\}$),
then
$$
\ttt_{A \Delta B} = \ttt_A \cdot \ttt_B.
$$

\begin{lemma}
Let $\ttt \in \{ 1 , Z \}^n$.
The map $(A,B) \mapsto \ttt_{A \cap B}$ defines a cocyle on
$\cP(\sfn)$.
\end{lemma}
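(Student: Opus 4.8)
The plan is to check directly that the map $c(A,B) := \ttt_{A \cap B}$ satisfies the cocycle relation (\ref{eqn:cocycle}), i.e.\ that
$$
\ttt_{A \cap B} \, \ttt_{(A \Delta B) \cap C} \;=\; \ttt_{A \cap (B \Delta C)} \, \ttt_{B \cap C}
$$
for all $A, B, C \in \cP(\sfn)$. The structural input is the identity $\ttt_{A \Delta B} = \ttt_A \ttt_B$ established in the paragraph preceding the lemma: because every element of $K = \{1, Z\}$ squares to $1$, the map $A \mapsto \ttt_A$ is a \emph{group homomorphism} from $(\cP(\sfn), \Delta)$ to the multiplicative group $\{1, Z\}$. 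The other ingredient is the Boolean distributive law $(A \Delta B) \cap C = (A \cap C) \Delta (B \cap C)$ together with its mirror image $A \cap (B \Delta C) = (A \cap B) \Delta (A \cap C)$; these simply record that $(\cP(\sfn), \Delta, \cap)$ is the commutative ring $\F_2^n$, in which $\cap$ distributes over $\Delta$.

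First I would note the two distributivity identities above, and then apply the homomorphism $A \mapsto \ttt_A$ to each, obtaining $\ttt_{(A \Delta B) \cap C} = \ttt_{A \cap C} \ttt_{B \cap C}$ and $\ttt_{A \cap (B \Delta C)} = \ttt_{A \cap B} \ttt_{A \cap C}$. Substituting these into the two sides of the desired identity, both sides collapse to $\ttt_{A \cap B} \, \ttt_{A \cap C} \, \ttt_{B \cap C}$, and the cocycle relation follows. This is really the special case of a general principle that I would state explicitly: any bi-additive map $b : \cP(\sfn) \times \cP(\sfn) \to \{1, Z\}$ automatically solves (\ref{eqn:cocycle}), since $b(A,B)\, b(A \Delta B, C) = b(A,B)\, b(A,C)\, b(B,C) = b(A, B \Delta C)\, b(B,C)$; and $(A,B) \mapsto \ttt_{A \cap B}$ is bi-additive precisely by the computation just described.

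I do not expect a genuine obstacle here, as the argument is a short direct verification. The one point worth flagging in the write-up is that the reduction $\ttt_{A \Delta B} = \ttt_A \ttt_B$ — hence the bi-additivity of $c$ — genuinely requires $Z^2 = 1$, so the hypothesis $\ttt \in \{1, Z\}^n$ rather than $\ttt$ valued in an arbitrary commutative monoid is essential: for a general $K$ the map $A \mapsto \ttt_A$ is only multiplicative on disjoint unions, and $(A,B) \mapsto \ttt_{A \cap B}$ need not be a cocycle.
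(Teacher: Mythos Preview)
Your proof is correct and follows exactly the same approach as the paper's: both verify the cocycle identity by using distributivity of $\cap$ over $\Delta$ together with the homomorphism property $\ttt_{X \Delta Y} = \ttt_X \ttt_Y$ to reduce each side to $\ttt_{A\cap B}\,\ttt_{A\cap C}\,\ttt_{B\cap C}$. The paper merely compresses this into a single displayed line, while you spell out the intermediate steps and add the (correct and helpful) remark that the argument rests on $Z^2=1$.
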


\begin{proof}
$\ttt_{A \cap B} \ttt_{(A \Delta B)\cap C} =
\ttt_{A\cap B} \ttt_{A \cap C}\ttt_{B \cap C} =
\ttt_{A \cap (B \Delta C)} \ttt_{B \cap C}$.
\end{proof}


We call the cocycle from the preceding lemma ``relative''. 
It will be combined with an  ``absolute'' cocyle $\gamma$:

\begin{definition}
Let
$L := \{ (a,b) \in \sfn^2 \mid a<b \}$ be the usual total order 
relation on $\sfn$, and
$L^{-1} = \{ (a,b)\in \sfn^2 \mid a>b\}$ its opposite order.
For a subset $R \subset (\sfn \times \sfn)$, call
$$
\Inv(R) := R \cap L^{-1}= \{ (a,b)\in R \mid a > b \}
$$
the set of {\em inversions in $R$}, and
for $(A,B) \in \cP(\sfn)^2$, let
$$
\gamma(A,B) := 
Z^{ \vert \Inv(A \times B)  \vert } =
Z^{ \vert \{ (a,b) \in A \times B \vert \, a > b\} \vert} .
$$
\end{definition}

\begin{theorem}[The absolute cocycle]\label{th:absolute-cocycle}
Let $\ttt \in \{ 1 , Z \}^n$ and $A,B \in \cP(\sfn)$.
Then the product $e_A e_B$ in $Q(\ttt)$ is given by
$$
e_A e_B = \ttt_{A \cap B} \cdot \gamma(A,B) \cdot e_{A \Delta B} . 
$$
The map $\gamma : \cP(\sfn)^2 \to \{ 1 , Z\}$
is the cocylce defining $Q_n$ as central extension of $\cP(\sfn)$.
\end{theorem}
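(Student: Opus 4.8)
The plan is to compute $e_A e_B$ by straightening the word $e_{a_1}\cdots e_{a_k}\,e_{b_1}\cdots e_{b_\ell}$, where $A=\{a_1<\dots<a_k\}$ and $B=\{b_1<\dots<b_\ell\}$, into the normal form furnished by Theorem \ref{th:shuffle}, using only the defining relations $e_ke_\ell=Ze_\ell e_k$ for $k<\ell$, $e_i^2=t_ie_0$, and centrality of $Z$. Rather than track the full bubble-sort bookkeeping at once, I would argue by induction on $|B|$. For $B=\emptyset$ both sides equal $e_A$, since $\ttt_\emptyset=1$, $\gamma(A,\emptyset)=Z^0=1$ and $A\Delta\emptyset=A$.

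For the inductive step, set $b:=\min B$ and $B':=B\setminus\{b\}$, so that $e_B=e_be_{B'}$ and $e_Ae_B=(e_Ae_b)e_{B'}$. First compute $e_Ae_b$ directly: moving the factor $e_b$ leftwards past $e_{a_k},e_{a_{k-1}},\dots$, each factor $e_{a_i}$ with $a_i>b$ contributes a central $Z$ (from $e_{a_i}e_b=Ze_be_{a_i}$), and if $b\in A$ the travelling $e_b$ eventually meets its twin, contributing $e_b^2=t_be_0$. Hence in all cases
$$
e_Ae_b=\ttt_{A\cap\{b\}}\,Z^{|\{a\in A:\,a>b\}|}\,e_{A\Delta\{b\}}=\ttt_{A\cap\{b\}}\,\gamma(A,\{b\})\,e_{A\Delta\{b\}},
$$
because $\Inv(A\times\{b\})=\{(a,b):a\in A,\ a>b\}$. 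Since these scalars lie in $\{1,Z\}$ and are central, $e_Ae_B=\ttt_{A\cap\{b\}}\,\gamma(A,\{b\})\,e_{A\Delta\{b\}}e_{B'}$, and $|B'|<|B|$ allows the induction hypothesis to be applied to $e_{A\Delta\{b\}}e_{B'}$.

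It then remains to recombine the two pairs of central factors. For the $\ttt$-part: since $b\notin B'$ one has $(A\Delta\{b\})\cap B'=A\cap B'$, and $A\cap\{b\}$ and $A\cap B'$ are disjoint with union $A\cap B$; by the multiplicativity of $X\mapsto\ttt_X$ on disjoint unions (the ``multiplicative measure'' property recorded just before the statement), $\ttt_{A\cap\{b\}}\cdot\ttt_{(A\Delta\{b\})\cap B'}=\ttt_{A\cap B}$. For the $\gamma$-part: $\Inv(A\times B)=\Inv(A\times\{b\})\sqcup\Inv(A\times B')$, and because $b=\min B$ we have $\Inv(\{b\}\times B')=\emptyset$, so replacing $A$ by $A\Delta\{b\}$ alters $\Inv(-\times B')$ only by a subset of $\Inv(\{b\}\times B')$, i.e.\ not at all; thus $|\Inv((A\Delta\{b\})\times B')|=|\Inv(A\times B')|$ and $\gamma(A,\{b\})\gamma(A\Delta\{b\},B')=\gamma(A,B)$ (exponents mod $2$, as $Z^2=1$). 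Using $(A\Delta\{b\})\Delta B'=A\Delta B$, this gives $e_Ae_B=\ttt_{A\cap B}\,\gamma(A,B)\,e_{A\Delta B}$. Finally, specializing to $\ttt=(1,\dots,1)$ makes $\ttt_{A\cap B}=1$, so $e_Ae_B=\gamma(A,B)e_{A\Delta B}$ in $Q_n$; thus $\gamma$ is the function $\tau$ of Theorem \ref{th:shuffle}(3) for $Q_n$, and since $Q_n$ is a group, associativity forces $\gamma$ to satisfy the cocycle relation \eqref{eqn:cocycle} — equivalently, $\ttt_{A\cap B}$ is a cocycle by the Lemma above and $\tau_\ttt=\ttt_{A\cap B}\,\gamma$ is one, hence so is the quotient $\gamma$.

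\textbf{Main obstacle.} The only delicate point is the inversion bookkeeping, namely verifying that replacing $A$ by $A\Delta\{b\}$ does not change $|\Inv(-\times B')|$ mod $2$; this is exactly where the choice $b=\min B$ is essential, and everything else is a routine application of the defining relations.
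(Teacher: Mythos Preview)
Your proof is correct and follows essentially the same route as the paper's: both arguments straighten the word $e_{a_1}\cdots e_{a_k}e_{b_1}\cdots e_{b_\ell}$ by moving the $e_{b_j}$ leftward one at a time in increasing order of $j$, counting a factor $Z$ for each pair $(a,b)\in A\times B$ with $a>b$ and a factor $t_i$ for each $i\in A\cap B$. The paper leaves the bookkeeping implicit (its line ``first for $j=b_1$, then for $j=b_2$, and so on'' is exactly your induction), whereas you make explicit the one point the paper elides, namely that the choice $b=\min B$ guarantees $\Inv(\{b\}\times B')=\emptyset$ so that the running count is unaffected by whether $b$ was absorbed into $A$ or not; this is precisely why the paper's summation $\sum_{b\in B}|\{a\in A:a>b\}|$ is legitimate despite $A$ changing along the way.
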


\begin{proof}
As in the proof of Theorem \ref{th:shuffle}, we re-order the product
$e_A e_B = e_{a_1}\cdots e_{a_k} e_{b_1} \cdots e_{b_l}$. 
After re-ordering,
indices $i \in A \cap B$ give rise to a square $e_i^2 = t_i$, whence
the factor $\ttt_{A \cap B}$.
While re-ordering, a relation of type $e_i e_j = Z e_j e_i$ is applied
when $i > j$, first for $j = b_1$, then for $j=b_2$, and so on, so the
number of times we apply it is
$$
\sum_{b \in B} \vert \{ a \in A \mid a > b \} \vert =
\vert \{ (a,b) \in A \times B \mid a > b \} \vert = \vert \Inv(A,B)\vert .
$$
When $\ttt =(1,\ldots,1)$ 
 (case of $Q_n$), the factor $\ttt_{A\cap B}$ is always $1$, so
$\gamma(A,B)$ is the cocyle of $Q_n$ (and in particular satisfies
again the cocycle relation).
 \end{proof}

Besides satisfying the cocycle relation
(\ref{eqn:cocycle}),
 the cocyle $\gamma$ has several other properties, which in turn
can be used to develop Clifford theory based on the discrete 
Clifford group (see  \cite{L} for a systematic exposition of this viewpoint).



\section{Group algebra, and Clifford algebras}\label{sec:algebra}

In the following, we start to develop some ``harmonic analysis'' on graded groups -- 
far from complete, with the main aim to clarify the r\^ole of the Clifford algebra in 
case of the discrete Clifford category. Remarkably, this shows some analogy with the
\href{https://en.wikipedia.org/wiki/Heisenberg_group#Representation_theory}{harmonic analysis
of the Heisenberg group}. 
This deserves to be developed elsewhere, and in the following we assume that all groups
are discrete, or even finite.

\subsection{Group (super) algebra}
From now on we fix a commutative unital base ring $\K$.
We assume that the scalar $2$ is invertible in $\K$.
For a set $M$, we denote by $\K[M]$ the free $\K$-module
with basis denoted $(\delta_m)_{m \in M}$, 
so elements of $\K[M]$ are finite linear combinations
$\sum_{m\in M} \lambda_m \delta_m$; if $M = G$ is a
semigroup, then $\K[G]$ carries an associative product $*$
defined by $\delta_x * \delta_y = \delta_{xy}$ (semigroup algebra; cf.\ \cite{FH});
if $G$ is a monoid with unit $e$, the basis element $\delta_e$ becomes a
neutral element of $\K[G]$, and if $G$ is a group, then inversion
$G \to G$ induces an anti-automorphism
$t : \K[G]\to \K[G]$, sometimes called 
 the {\em canonical anti-automorphism} or {\em antipode}.
 
 \ssk
 In the following, let $G$ be a centrally $\Z/2 \Z$-graded group, and assume the
 element $Z(\ol 1)$ is non-trivial (we denote it again by $Z$, so that the image
 of $Z : \Z/2 \Z \to G$ is the group $\{ e , Z \}$).
 Let $\pi : G \to G^0= G/ \{ e,Z\}$ be the canonical projection, and
  $\bA := \K[G]$ be the group algebra.
 There are two maps of order two on $G$, which induce two linear maps of
 order two on $\bA$:
 \begin{align}
 Z : G \to G, \, g \mapsto Zg=gZ, & \quad \alpha : G \to G, \, g \mapsto Z^{d(g)} g,
 \\
 Z_*:\bA \to \bA, \, f \mapsto Z_* f, & \quad
 \alpha_*:\bA \to \bA,\, f \mapsto \alpha_* f.
 \end{align}
Since $2$ is invertible in $\K$, both linear maps can be diagonalized; since
$\alpha \circ Z = Z \circ \alpha$, they can be jointly diagonalized.
We denote the eigenspace decompositions by
\begin{align}
\mbox{ for } Z_* : & \quad \bA = \bA^+ \oplus \bA^-, \quad
\bA^\pm =\{ f \in \bA \mid \, Z_* f = \pm f \},
\\
 \mbox{ for } \alpha_* : & \quad \bA = \bA_0 \oplus \bA_1,
 \quad  \bA_k = \{ f \in \bA \mid \, \alpha_* f = (-1)^k f \} .
 \end{align}
Since $Z$ is central, the map $Z_*$ commutes with elements of the group algebra,
and hence the eigenspaces $\bA^\pm$ are {\em ideals} of the group algebra.
Since $\alpha$ is a group automorphism, by functoriality of the group algebra construction,
 it follows that the induced linear map is an {\em algebra automorphism of order two}, and hence
 the eigenspaces would behave multiplicatively if we indexed them by the eigenvalues
 $\pm 1$. However, following the usual convention for {\em super-algebras}, we take
 $0$ and $1$ as indices, so the eigenspaces obey the ``super algebra rules''
 $$
 \bA_i * \bA_j \subset \bA_{i+j \mod 2}.
 $$
 Each of the two ideals $\bA^\pm$ is, in turn, again a superalgebra.

 \begin{theorem}[Eigenspaces of $Z_*$; superalgebras]\label{th:Z}\label{th:Super}
With notation as above:
\begin{enumerate}
\item
The algebra $\K[G]$ is  a direct sum of ideals,
$\K[G]  = \K[G]^+ \oplus \K[G]^-$,
and
both eigenspaces are isomorphic as $\K$-modules. 
The projection $\pi : G \to G^0 = G/\{ e,Z\}$ induces an algbra morphism
$\pi_*:\K[G] \to \K[G^0]$ having kernel $\K[G]^-$ and image isomorphic to $\K[G]^+$, so
$\K[G]^+\cong \K[G^0]$ as algebra.
\item
With respect to $\alpha_*$, the group algebra and the ideals from the preceding item
are super-algebras. 
The disjoint union $G = G_0 \sqcup G_1$ induces a decomposition
$\K[G] = \K[G_0]\oplus \K[G_1]$ such that
$$
\K[G_0] \subset \bA_0, \qquad \bA_1 \subset \K[G_1] .
$$
\end{enumerate}
\end{theorem}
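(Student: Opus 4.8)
The plan is to unpack each assertion of Theorem~\ref{th:Z}--\ref{th:Super} using only elementary facts about group algebras together with the structural results of Theorem~\ref{th:shuffle} and the observations preceding the statement. For Item~(1), I would first note that $Z_\ast$ is an algebra endomorphism only up to the sign issue, but it commutes with left and right multiplication (because $Z$ is central in $G$), so $Z_\ast$ is a $\K[G]$-bimodule endomorphism; hence its $\pm 1$-eigenspaces $\bA^\pm$ are two-sided ideals, and $\bA = \bA^+ \oplus \bA^-$ since $2 \in \K^\times$ makes $\tfrac12(1 \pm Z_\ast)$ complementary idempotent projections. The isomorphism of $\bA^+$ and $\bA^-$ as $\K$-modules comes from the bijection $g \mapsto Zg$ on $G$, which interchanges the two eigenspaces (it sends $\delta_g + \delta_{Zg}$ to itself and $\delta_g - \delta_{Zg}$ to its negative); more concretely, right multiplication by $\tfrac12(\delta_e - \delta_Z)$ restricted to $\bA^+$ lands in $\bA^-$ and is inverted by the analogous map. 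For the statement about $\pi$, I would observe that $\pi: G \to G^0$ is a surjective group homomorphism with kernel $\{e,Z\}$, so by functoriality it induces a surjective algebra morphism $\pi_\ast: \K[G] \to \K[G^0]$; its kernel is spanned by $\delta_g - \delta_{Zg}$, which is exactly $\bA^-$ (the $-1$-eigenspace of $Z_\ast$), and since $\bA = \bA^+ \oplus \bA^-$ the restriction $\pi_\ast|_{\bA^+}$ is an algebra isomorphism onto $\K[G^0]$.

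For Item~(2), I would use that $\alpha: G \to G$ is a genuine group automorphism (Lemma~\ref{la:grading-auto}), so $\alpha_\ast$ is an algebra automorphism of $\bA$ of order two; the eigenspace decomposition $\bA = \bA_0 \oplus \bA_1$ then exists because $2 \in \K^\times$, and the relation $\alpha_\ast(fg) = \alpha_\ast(f)\alpha_\ast(g)$ immediately gives $\bA_i \ast \bA_j \subseteq \bA_{i+j}$, so $\bA$ is a superalgebra; since $\alpha_\ast$ preserves each ideal $\bA^\pm$ (it commutes with $Z_\ast$ because $\alpha$ and $Z$ commute as maps on $G$), each $\bA^\pm$ inherits the superalgebra structure. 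Finally, to relate the superalgebra grading to the set-theoretic decomposition $G = G_0 \sqcup G_1$, I would compute $\alpha_\ast(\delta_g) = \delta_{Z^{d(g)}g}$: if $g \in G_0$ then $\alpha(g) = g$, so $\delta_g \in \bA_0$, giving $\K[G_0] \subseteq \bA_0$; if $g \in G_1$, then $\alpha_\ast$ swaps $\delta_g$ and $\delta_{Zg}$, and on the two-dimensional span of these the eigenvectors are $\delta_g + \delta_{Zg} \in \bA_0$ and $\delta_g - \delta_{Zg} \in \bA_1$, both of which lie in $\K[G_1]$; hence $\bA_1 \subseteq \K[G_1]$, and the asserted inclusions follow.

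The one subtlety worth flagging — and the only place where anything beyond bookkeeping is needed — is why $Z_\ast$ and $\alpha_\ast$ commute and can be simultaneously diagonalized: this rests on the identity $\alpha \circ (g \mapsto Zg) = (g \mapsto Zg) \circ \alpha$ on $G$, which holds since $d(Zg) = d(g)$ (as $d(Z) = 0$ by Definition~\ref{def:graded-monoid}(2)) and $Z$ is central, so $\alpha(Zg) = Z^{d(Zg)} Zg = Z \cdot Z^{d(g)} g = Z\alpha(g)$. Once that commutation is in hand, simultaneous diagonalization is automatic because each map has order two and $2$ is invertible, so the four joint projections $\tfrac14(1 \pm Z_\ast)(1 \pm \alpha_\ast)$ decompose $\bA$ compatibly. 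I do not expect any genuine obstacle here; the proof is essentially a sequence of verifications, and the main care is simply to keep the two gradings (the ideal splitting by $Z_\ast$ and the super-grading by $\alpha_\ast$) conceptually separate while noting that they are compatible.
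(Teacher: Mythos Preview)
Your overall approach matches the paper's, and the arguments for the ideal decomposition, the kernel/image description of $\pi_*$, and all of Item~(2) are correct and essentially the same as the paper's. However, your justification that $\bA^+ \cong \bA^-$ as $\K$-modules fails in both versions you offer. The bijection $g \mapsto Zg$ induces $Z_*$ itself, which by definition \emph{preserves} each eigenspace (acting as $\pm 1$), as you yourself observe parenthetically; it does not interchange them. And your ``more concrete'' map --- right multiplication by $\tfrac12(\delta_e - \delta_Z)$ --- is identically zero on $\bA^+$: since $\bA^+$ and $\bA^-$ are complementary two-sided ideals, $\bA^+ \cdot \bA^- \subset \bA^+ \cap \bA^- = 0$, and $\tfrac12(\delta_e - \delta_Z) \in \bA^-$.

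The paper's fix is to choose a set $H$ of coset representatives for $G \to G^0$, so $G = H \sqcup ZH$ and $\K[G] = \K[H] \oplus Z_*\K[H]$; then the projectors $P^\pm = \tfrac12(1 \pm Z_*)$ restrict to $\K$-linear isomorphisms $\K[H] \to \bA^\pm$, exhibiting both eigenspaces as free of rank $|H|$. An equivalent repair in your style: the $\K$-linear map $\bA^+ \to \bA^-$ sending $\delta_h + \delta_{Zh} \mapsto \delta_h - \delta_{Zh}$ (for $h$ ranging over such a section) does the job, but note this is \emph{not} given by multiplication in $\K[G]$ and depends on the choice of section.
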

 
 \begin{proof}
(1) 
Let $P^\pm$ be the two projectors onto the eigenspaces. 
To see that both eigenspaces are isomorphic,
choose a set $H$ of representatives for the $\{ e,Z \}$-cosets
in $G$ (i.e., a section of $\pi$), so that 
$G = H \sqcup ZH$ as set, whence
$\K[G] = \K[H] \oplus Z_* \K[H]$ as $\K$-module.
Then 
$\K[G]^+$ represents the diagonal, and $\K[G]^-$ the
 antidiagonal in this decomposition, and
 the restrictions and corestrictions
$$
P^+:\K[H] \to \K[G]^+, \qquad P^-:\K[H] \to \K[G]^- 
$$
are linear isomorphisms, so both $\K[G]^\pm$ are isomorphic to
$\K[H]$ as $\K$-modules. (When $G$ is finite, 
both spaces have dimension
$\vert H \vert = \vert G \vert / 2$).

Since $\pi(Z)=\pi(1)$, it follows that
$\pi_*(1-Z)=0$. Thus the ideal generated by $1-Z$, which is
$P^-(\K[G]) = \K[G]^-$,  belongs to
the kernel of $\pi_*$. 
On the other hand, the basis
$(h + Zh)_{h\in H}$, is sent to the basis
$\pi_*(h+Zh)= 2 \pi_*(h)$ of $\K[H]$, so the restriction of $\pi_*$
to $\K[G]^+$ defines a bijection
$\K[G]^+ \to \K[H]$, whence 
 $\K[G]^-$ is the kernel of $\pi_*$, and $\pi_*$ induces an isomorphism
 of $\K[G]^+$ onto $\K[G^0]$.

\ssk
(2)
We have already noticed that the decompositions define superalgebras. To prove
the remaining statements,
we decompose further $H = H_0 \sqcup H_1$ into even and odd elements,
whence $G = H_0 \sqcup Z H_0 \sqcup H_1 \sqcup Z H_1$.
Since $\alpha$ is the identity on $G_0$, we get
$\K[G_0] \subset \bA_0$.
Also, $P^+ \K[H_1] \subset \bA_0$,
since, if $g \in H_1$, then
 $\alpha_* (\delta_g  + Z_* \delta_g) = \delta_{Zg} + \delta_g = \delta_g + Z_* \delta_g$.
It follows that
$\bA_1 = P^- \K [H_1] \subset \K[G_1]$. 
\end{proof}

\subsection{Graded and ungraded tensor product} 
If $A,B$ are sets, the free module with basis $A \sqcup B$ is
$\K[A \sqcup B] = \K[A]\oplus \K[B]$, and the one with basis $A \times B$ is
$\K[A \times B] = \K[A] \otimes \K[B]$. 
The tensor product of two associative algebras is again an associative algebra, for the
``usual'' product
$a \otimes b \cdot a' \otimes b' =  aa' \otimes bb'$.
If $\bA, \bB$ are $\Z/ 2 \Z$-{\em graded}, then their {\em graded tensor product}
is the tensor product, as module, together with bilinear product 
\begin{equation}\label{eqn:graded-tp} 
a \otimes b \cdot a' \otimes b' = (-1)^{\vert b \vert \, \vert a' \vert} 
aa' \otimes bb',
\end{equation}
where $a,b,a',b'$ are assumed to be {\em homogeneous} elements.
If $\bA$ and $\bB$ are associative, then this is again an {\em associative} algebra
(cf.\ \cite{BtD}, p. 56).

\begin{theorem}\label{th:Cl}
Let $(G_i,d_i,Z_i)$, $i=1,2$, two centrally $\Z/2 \Z$-graded groups, and
$G = G_1 \widehat \times_Z G_2$. Then the algebras $\K[G]^\pm$ are given in terms
of ungraded, resp.\ graded tensor products of algebras by those of $G_i$, $i=1,2$, as
$$
\K[G]^+ = \K[G_1]^+ \otimes \K[G_2]^+, \qquad
\K[G]^- = \K[G_1]^- \widehat{\otimes} \,  \K[G_2]^- .
$$
In particular, it follows that for $G = Q(\ttt)$, the ideal $\bA^-$ is a Clifford algebra:
for $\ttt \in \{ 1,Z \}^n$, define
$\ul \Cl (\ttt)$ to be the Clifford algebra of $\K^n$ for the diagonal quadratic form
with coefficient $1$ if $t_i=1$ and $-1$ if $t_i = Z$. Then 
$$ 
\K[Q(\ttt)]^+ \cong \K[\cP(\sfn)] ,\qquad
\K[Q(\ttt)]^- \cong \ul \Cl(\ttt) .
$$
In particular, $\K[Q_{p,q}]^- \cong \Cl_{p,q}(\K)$.
\end{theorem}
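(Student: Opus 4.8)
The plan is to prove the decomposition $\K[G]^+ = \K[G_1]^+ \otimes \K[G_2]^+$ and $\K[G]^- = \K[G_1]^- \widehat\otimes \K[G_2]^-$ first, and then read off the Clifford-algebra statement as a special case, using the internal description of $G = G_1 \widehat\times_Z G_2$ from Remark \ref{rk:internal}. Recall from there that $G$ contains commuting copies of $G_1$ and $G_2$ meeting in $Z^\Gamma = \{e,Z\}$, with commutation relation $g_2 g_1 = Z^{d(g_2)d(g_1)} g_1 g_2$. The key structural input is the set-theoretic splitting: if $H_i \subset G_i$ is a section of $G_i \to G_i^0$, consisting of even and odd representatives $H_i = H_{i,0}\sqcup H_{i,1}$, then $\{[h_1,h_2] \mid h_i \in H_i\}$ is a section of $G \to G^0$, and $G^0 \cong G_1^0 \times G_2^0$ as sets (indeed as groups, by Theorem \ref{th:graded-group}(2) passing to the quotient, since $Z$ is killed). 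This already gives $\K[G]^+ \cong \K[G^0] \cong \K[G_1^0 \times G_2^0] = \K[G_1^0]\otimes\K[G_2^0] \cong \K[G_1]^+ \otimes \K[G_2]^+$ as $\K$-modules by Theorem \ref{th:Z}(1); the point is that this is an algebra isomorphism, which follows because $\pi_*:\K[G]\to\K[G^0]$ is an algebra morphism and the product on $\K[G_1^0\times G_2^0]$ is the untwisted one (the twist $Z^{d(h_1)d(g_2)}$ disappears in $G^0$).

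For the minus part, I would work with the explicit basis. The module $\K[G]^-$ has basis $\{\delta_{[h_1,h_2]} - \delta_{Z[h_1,h_2]} \mid h_i \in H_i\}$, and similarly $\K[G_i]^-$ has basis $u_{h_i} := \delta_{h_i} - \delta_{Zh_i}$. Define the linear map $\Phi: \K[G_1]^-\otimes\K[G_2]^- \to \K[G]^-$ by $u_{h_1}\otimes u_{h_2} \mapsto \delta_{[h_1,h_2]} - \delta_{Z[h_1,h_2]}$; it is clearly a $\K$-module isomorphism. The task is to check it intertwines the graded tensor product on the source with the convolution product on the target. So I would compute $(u_{h_1}\otimes u_{h_2})\cdot(u_{h_1'}\otimes u_{h_2'})$ on the source using formula (\ref{eqn:graded-tp}) — noting that the $\Z/2\Z$-grading on $\K[G_i]^-$ is the one induced by $\alpha_*$, so the homogeneous degree of $u_{h_i}$ is $d(h_i)$ — and on the target compute $(\delta_{[h_1,h_2]}-\delta_{Z[h_1,h_2]})\ast(\delta_{[h_1',h_2']}-\delta_{Z[h_1',h_2']})$ using the group law $[h_1,h_2][h_1',h_2'] = Z^{d(h_1')d(h_2)}[h_1h_1',h_2h_2']$ from Theorem \ref{th:graded-group}(2). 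Both sides will produce $[h_1h_1',h_2h_2']$-type terms carrying a sign/central factor equal to $(-1)^{d(h_2)d(h_1')}$ (resp.\ $Z^{d(h_2)d(h_1')}$, and on $\K[G]^-$ the element $Z$ acts as $-1$), so they match; one also has to absorb the correction terms coming from $h_1h_1'\notin H_1$ etc., but these are exactly the terms $\delta_x - \delta_{Zx}$ again, consistent on both sides. This verification is where the real work lies, and it is the step I expect to be the main obstacle — not because it is deep, but because one must keep the bookkeeping of three sources of signs (the Koszul sign in (\ref{eqn:graded-tp}), the central twist in the group law, and the action $Z_\ast = -\mathrm{id}$ on the minus-ideal) straight; the content is precisely that these three are compatible.

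Finally, for the Clifford statement: by Theorem \ref{th:shuffle}, $Q(t_m) = \{e_0, e_m, Z_m, Z_m e_m\}$ with $e_m^2 = t_m$, so $\K[Q(t_m)]^-$ is the $\K$-algebra generated by the image $\bar e_m$ of $e_m$ subject to $\bar e_m^2 = \pm 1$ (the sign being $+1$ if $t_m = 1$ and $-1$ if $t_m = Z$, since $Z$ acts as $-1$ on the minus ideal) — that is, $\K[Q(t_m)]^- \cong \ul\Cl(t_m)$, a Clifford algebra of a one-dimensional space. For $G = Q(\ttt) = Q(t_1)\widehat\times_Z\cdots\widehat\times_Z Q(t_n)$, iterating the $\K[G]^- = \K[G_1]^-\widehat\otimes\K[G_2]^-$ formula (legitimate by associativity of $\widehat\times_Z$, Theorem \ref{th:graded-group}(3), together with associativity of $\widehat\otimes$ cited from \cite{BtD}) gives $\K[Q(\ttt)]^- \cong \ul\Cl(t_1)\widehat\otimes\cdots\widehat\otimes\ul\Cl(t_n)$, which by the Clifford relation (\ref{eqn:Clifford-tensorproduct}) is the Clifford algebra of $\K^n$ with the diagonal form having entries $+1$ or $-1$ according to $t_i$ — i.e., $\ul\Cl(\ttt)$. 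Specializing $\ttt = (1,\ldots,1,Z,\ldots,Z)$ with $p$ ones and $q$ $Z$'s yields $\K[Q_{p,q}]^- \cong \Cl_{p,q}(\K)$. The plus-part statement $\K[Q(\ttt)]^+ \cong \K[\cP(\sfn)]$ is immediate from the module/algebra isomorphism $\K[G]^+ \cong \K[G^0]$ together with $Q(\ttt)^0 \cong \cP(\sfn)$ established in Theorem \ref{th:shuffle}(2).
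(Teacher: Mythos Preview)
Your proposal is correct and follows essentially the same approach as the paper: choose sections $H_i\subset G_i$, compute products of the corresponding basis elements in $\K[G]$ via the group law $[h_1,h_2][h_1',h_2']=Z^{d(h_2)d(h_1')}[h_1h_1',h_2h_2']$, and then observe that under projection to $\K[G]^{\pm}$ the central element $Z$ becomes the scalar $\pm 1$, yielding the ungraded resp.\ graded tensor product; the Clifford identification then follows by induction from the one-variable case together with~(\ref{eqn:Clifford-tensorproduct}). The only cosmetic difference is that for the plus part you route the argument through the quotient isomorphism $\K[G]^+\cong\K[G^0]$ and $G^0\cong G_1^0\times G_2^0$, whereas the paper treats both signs in parallel by projecting the same computation with $P^{\pm}$.
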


\begin{proof}
Choose $H$ in $G$, and $H_i$ in $G_i$, as in the preceding proofs.
As above, elements
$P^\pm (\delta_h) = \frac{1}{2}(\delta_h \pm Z_* \delta_{h_i})$, $h \in H$, form a basis in  $\K[G]^\pm$,
and likewise such elements with $h_i\in H_i$ form a basis in $\K[G_i]^\pm$.
We compute, in $\K[G]$,
\begin{align*}
\delta_{h_1} \otimes \delta_{h_2} \cdot \delta_{h_1' }\otimes \delta_{h_2'} & =
\delta_{[h_1,h_2]} \cdot \delta_{[h_1',h_2']} \\
& =
Z^{d(h_2)d(h_1')} * \delta_{[h_1 h_1',h_2h_2']} =
Z^{d(h_2)d(h_1')} * \delta_{h_1 h_1' } \otimes \delta_{h_2h_2'  },
\end{align*}
and project with $P^-$ onto $\K[G]^-$.
Since $Z$ projects to $-1$, this gives
\begin{align*}
P^-(\delta_{h_1)} \otimes P^-(\delta_{h_2}) \cdot
P^-(\delta_{h_1'}) \otimes P^-(\delta_{h_2'}) & =
\\
(-1)^{d(h_2)d(h_1')} 
P^-(\delta_{h_1})P^-(\delta_{h_1'})\otimes P^-(\delta_{h_2}) P^-(\delta_{h_2'}),&
\end{align*}
so the algebra product is given by the graded tensor product.
On $\K[G]^+$, since $Z$ projects to $1$, there is no sign change,
and we get the usual tensor product.

The statement on the Clifford agebras follows from the preceding statement by
 induction, using  Relation (\ref{eqn:Clifford-tensorproduct})
for Clifford algebras. 
(Cf. 
the arguments given in loc.cit.,  \cite{BtD}, where this is used to prove that the dimension
of the Clifford algeba $\Cl(\K^n)$ is $2^n$.
On the other hand, knowing the dimension of the Clifford algebra, the proof could also be 
given by noticing that the generators of $\bA^-$ satisfy exactly the same relations as the
generators of the Clifford algebra $\Cl(\ttt)$.)
\end{proof}

\begin{example}[Group algebra of $Q$ and of $D_4$]
The preceding arguments can be used to show, in a transparent way, that the Clifford algebra
$\Cl_{2,0}$ is isomorphic to $M_2(\K)$, and that
$\Cl_{0,2}$ is the usual quaternion algebra. 
Namely,
by the preceding results, the group $Q_{p,q}$ is (isomorphic to) the group generated
by the basis elements $e_i$ in the Clifford algebra $\Cl_{p,q}(\K) \cong \K[Q_{p,q}]^-$.
For instance, the quaternion group $G=Q$ if often defined as group of matrices
$\{ \pm 1 , \pm i, \pm i , \pm k \} \subset \bH^* \subset \Gl(2,\bC)$.
Now, modulo $-1$, these matrices furnish a basis of $\bH$, 
the elements multiply as they should in the group,
and the matrix $Z = -1$ acts as it
should on $\K[G]^-$ (by the scalar $-1$): so we can conclude that
$\K[G]^- \cong \bH$ as algebra, whence $\Cl_{0,2}(\R) \cong \bH$.
The same argument works for the group $G = D_4$, which we realize as matrix
group $\{ \pm 1, \pm I,\pm J, \pm K\} \subset \Gl(2,\Z)$.
Modulo $-1$, the four matrices give a basis of $M_2(\K)$, and $Z = -1$ acts by $-1$,
whence $\K[G]^- \cong M_2(\K)$. 
\end{example}

\begin{remark}
The fact that Clifford algebras are parts of group algebras, or images of them, is known (cf.\ \cite{A, AM, S}),
and we hope the framework of graded groups proposed here clarifies those approaches.
For instance, the abstract ingredients of the
{\em classification of Clifford algebras} are quite neatly featured in the abstract approach 
 (see  Appendix 
\ref{app:Qpq}, cf. also \cite{S82} and
\cite{L}, p.53, for this issue).
\end{remark}

\subsection{Harmonic analysis of the group algebra}
The group $G = Q(\ttt)$ acts on $\K[G]$ in the usual way
(left-regular representation). This representation decomposes into
sub-representations (ideals) 
$\K[G]=\K[G]^+ \oplus \K[G]^-$. We'll decompose it further.
Since $\K$ is just a ring and not a field, we do not speak about
decomposition into ``irreducible'' modules, but rather
determine a basis of the space of {\em class functions} (which
in case of $\K= \bC$ correspond to the irreducible characters). 
Recall that a {\em class function} is just an element
$f \in \K[G]$ commuting with every $g \in G$, that is, an element of
the center of $\K[G]$.

Since $\K[G]^+ \cong \K[\cP(\sfn)]$, and the group is abelian and each
element is of order at most two, a complete decomposition of this algebra
is easy:

\begin{proposition}
The group characters of $\cP(\sfn)$ are  the maps,
for $A \in \cP(\sfn)$,
$$
\chi_A : \cP(\sfn) \to \{ \pm 1 \}, \quad
B \mapsto \chi_A(B) := (-1)^{\vert A \cap B\vert}.
$$
Thus $\K[\cP(\sfn)] \cong \oplus_{A \in \cP(\sfn)} \K \chi_A$
is a decomposition into one-dimensional ideals. 
\end{proposition}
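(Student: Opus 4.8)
The plan is to verify directly that each $\chi_A$ is a group homomorphism $\cP(\sfn)\to\{\pm1\}$, that these are pairwise distinct, that there are exactly $2^n=\vert\cP(\sfn)\vert$ of them, and finally that they form a basis of $\K[\cP(\sfn)]$ with the stated multiplicative behaviour. First I would show $\chi_A$ is a character: using the elementary set-theoretic identity $\vert A\cap(B\Delta C)\vert\equiv\vert A\cap B\vert+\vert A\cap C\vert\pmod 2$ (which follows from $A\cap(B\Delta C)=(A\cap B)\Delta(A\cap C)$ together with $\vert X\Delta Y\vert\equiv\vert X\vert+\vert Y\vert\pmod 2$), one gets
$$
\chi_A(B\Delta C)=(-1)^{\vert A\cap(B\Delta C)\vert}=(-1)^{\vert A\cap B\vert}(-1)^{\vert A\cap C\vert}=\chi_A(B)\chi_A(C),
$$
so $\chi_A$ is a homomorphism into $\{\pm1\}$, and $\chi_\emptyset$ is trivial. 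Distinctness is immediate: if $A\neq A'$, pick $k$ in the symmetric difference, say $k\in A\setminus A'$; then $\chi_A(\{k\})=-1$ while $\chi_{A'}(\{k\})=1$, so $\chi_A\neq\chi_{A'}$. Since $\cP(\sfn)\cong(\Z/2\Z)^n$ is a finite abelian group of exponent $2$, its character group (into $\bC^\times$, equivalently into $\{\pm1\}$) has the same order $2^n$, so the $\chi_A$ exhaust all characters.

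Next I would pass from characters of the group to idempotents in the algebra. Viewing each $\chi_A$ as the element $\sum_{B\in\cP(\sfn)}\chi_A(B)\,\delta_B\in\K[\cP(\sfn)]$, one checks by the standard orthogonality computation that
$$
\chi_A*\chi_{A'}=\sum_{B,C}\chi_A(B)\chi_{A'}(C)\,\delta_{B\Delta C}=\sum_{D}\Bigl(\sum_{B}\chi_A(B)\chi_{A'}(B\Delta D)\Bigr)\delta_D,
$$
and since $\chi_{A'}(B\Delta D)=\chi_{A'}(B)\chi_{A'}(D)$, the inner sum is $\chi_{A'}(D)\sum_B\chi_A(B)\chi_{A'}(B)=\chi_{A'}(D)\cdot 2^n\,\llbracket A=A'\rrbracket$, using that $\sum_B\chi_{A\Delta A'}(B)$ equals $2^n$ if $A=A'$ and $0$ otherwise (a sum of a nontrivial character over the group vanishes — here is the one place $2$ invertible is convenient but not strictly needed for vanishing). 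Hence $\chi_A*\chi_{A'}=2^n\,\llbracket A=A'\rrbracket\,\chi_A$, so the $e_A:=2^{-n}\chi_A$ are orthogonal idempotents (legitimate since $2$, hence $2^n$, is invertible in $\K$), each spanning a one-dimensional ideal $\K\chi_A$.

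Finally I would argue that $\{\chi_A\mid A\in\cP(\sfn)\}$ is a $\K$-basis of $\K[\cP(\sfn)]$: there are $2^n$ of them, matching the rank of the free module, and the change-of-basis matrix from $(\delta_B)_B$ to $(\chi_A)_A$ has entries $\chi_A(B)=(-1)^{\vert A\cap B\vert}$, which is (a tensor power of $\begin{smallmatrix}1&1\\1&-1\end{smallmatrix}$ and hence) invertible over $\K$ because its square is $2^n$ times the identity and $2$ is invertible; so the $\chi_A$ span, and $\K[\cP(\sfn)]=\bigoplus_{A}\K\chi_A$ is a direct sum of one-dimensional ideals. The main obstacle, such as it is, is bookkeeping: making sure every orthogonality relation is justified over the ring $\K$ rather than a field — this is exactly what the standing hypothesis that $2$ is invertible in $\K$ buys us, since the only denominators that appear are powers of $2$.
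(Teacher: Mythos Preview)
Your proof is correct and follows essentially the same route as the paper: both verify that $\chi_A$ is a homomorphism via $A\cap(B\Delta C)=(A\cap B)\Delta(A\cap C)$ and then show the $\chi_A$ exhaust all characters (the paper by checking that the group morphism $A\mapsto\chi_A$ has trivial kernel, you by direct distinctness on singletons --- the same idea). You go further than the paper by explicitly carrying out the orthogonality computation in $\K[\cP(\sfn)]$ and exhibiting the idempotents $2^{-n}\chi_A$, whereas the paper leaves the ideal decomposition as a standard consequence of having listed all characters.
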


\begin{proof}
Since $(A\Delta B) \cap C = (A \cap C)\Delta (B \cap C)$, and 
$
\vert A \Delta B \vert  
\equiv  \vert A \vert + \vert B \vert \mod(2)$,
$$
\chi_A (B \Delta C) = (-1)^{\vert A \cap (B \Delta C) \vert} =
(-1)^{\vert A \cap B\vert} (-1)^{\vert A \cap C\vert}=
\chi_A(B) \chi_A(C),
$$
and $\chi_A(\emptyset)= (-1)^0 = 1$,
so $\chi_A$ is a morphism. Since the map
$$
\chi : \cP(\sfn) \times \cP(\sfn) \to \{ \pm 1 \}, \quad
(A,B) \mapsto \chi_A(B) = (-1)^{\vert A \cap B\vert}
$$
is symmetric, it follows that
$$
\chi_\emptyset = 1 , \qquad
\chi_{A \Delta B} (C) = 
\chi_A (C) \chi_B(C),
$$
so $A \mapsto \chi_A$ is a group morphism from $\cP(\sfn)$ to its dual group.
Its kernel is trivial: 
$\chi_A(X) = 1$ for all $X$ means that
$\vert A \cap X \vert$ is even, for all $X$, and taking for $X$ the 
singletons, if follows that $A$ contains no element, so
$A =\emptyset$. 
Thus the morphism $A \mapsto \chi_A$ is injective.
Since characters are always linearly independent, 
the cardinality of $\widehat H$ cannot exceed the one of $H$,
and thus the morphism is a bijection of $\cP(\sfn)$ onto its dual group.
\end{proof}


\begin{remark}
Parametrizing the characters as in the proposition, the
{\em character table} of $G = \cP(\sfn)$ is the square matrix
 $$
( \chi_A(C))_{(A,C) \in \cP(\sfn)^2} = 
 ((-1)^{\vert A \cap C \vert})_{(A,C) \in \cP(\sfn)^2}.
$$
A specific feature of these matrices is that they are {\em symmetric}: 
 $\chi_B(C)=\chi_C(B)$.
Explicitly, for $n=1$, $\cP(\sfone) = \{ \emptyset, \sfone \}$,
resp.\ $n=2$, $\cP(\sftwo) = \{ \emptyset, \{ 1 \}, \{ 2 \}, \sftwo \}$,
we get
$$
\begin{pmatrix} 1 & 1 \\ 1 & - 1 \end{pmatrix},
\qquad
\begin{pmatrix} 1 & 1 & 1 & 1 \\
1 & - 1 & 1 & -1 \\
1 & 1 & - 1 & -1 \\
1 & - 1 & -1 & 1 
\end{pmatrix}.
$$
\end{remark}

\begin{theorem}\label{th:HAn}
Let $\ttt \in \{ 1,Z\}^n$ and  $G = Q(\ttt)$, and 
$\bA := \K[G]^G$ the algebra of central $\K$-valued functions on $G$
(center of $\K[G]$).
Define for
$A \in \cP(\sfn)$, 
$$
E_A^+ := (e_0 +Z) \sum_{B \in \cP(\sfn)} (-1)^{\vert A \cap B\vert} e_B .
$$
Then $\K[G]^+ \subset \K[G]^G$, and
$(E_A)^+$ with $A \in \cP(\sfn)$ is a basis of $\K[G]^+$ such that
(orthogonal idempotents)
Moreover,
\begin{enumerate}
\item
Assume $n$ is even.
Then a basis of $\K[G]^G$ is given by the elements $(E_A^+)_{A\in \cP(\sfn)}$,
along with $e_0^- := (1-Z)e_0$.
The element $e_0^-$ is a basis of the center of  the 
Clifford algebra $\K[G]^-$.
\item
Assume $n$ is odd.
 Then a basis of $\K[G]^G$ is given by the elements 
 $(E_A^+)_{A\in \cP(\sfn)}$,
along with $e_0^- := (1-Z)e_0$ and $e_\sfn^- := (1-Z)e_\sfn^-$ (the
pseudoscalar).
The elements $(e_0^-,e_\sfn^-)$ form a basis of the center of
the Clifford algebra $\K[G]^-$.
\end{enumerate}
If $\K$ is a field, then each basis element corresponds to an irreducible representation of
$G$, and each irreducible $G$-representation arises in this way.
\end{theorem}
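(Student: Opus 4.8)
The plan is to identify the centre $\K[G]^{G}=Z(\K[G])$ by splitting it along the ring decomposition $\K[G]=\K[G]^{+}\times\K[G]^{-}$ furnished by the central idempotents $e^{\pm}=\tfrac12(\delta_{e}\pm\delta_{Z})$ (Theorem \ref{th:Z}, Theorem \ref{th:Cl}). Since $\K[G]$ is the product of the rings $\K[G]^{\pm}$, one has $Z(\K[G])=Z(\K[G]^{+})\oplus Z(\K[G]^{-})$. The first factor is all of $\K[G]^{+}$: it is commutative (being $\cong\K[\cP(\sfn)]$ by Theorem \ref{th:Z}), and in fact it is central in $\K[G]$, since it is spanned by the elements $\delta_{h}+\delta_{Zh}$, $h\in H$, and two-step nilpotency (Theorem \ref{th:shuffle}(4)) gives $hgh^{-1}g^{-1}\in\{e,Z\}$, hence $\{hg,Zhg\}=\{gh,Zgh\}$ and $\delta_{g}(\delta_{h}+\delta_{Zh})=(\delta_{h}+\delta_{Zh})\delta_{g}$ for all $g$. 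This already proves $\K[G]^{+}\subseteq\K[G]^{G}$, and reduces the theorem to exhibiting a basis of $\K[G]^{+}$ and one of $Z(\K[G]^{-})$.

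For $\K[G]^{+}$ I would transport everything through the algebra isomorphism $\pi_{*}\colon\K[G]^{+}\to\K[\cP(\sfn)]$ of Theorem \ref{th:Z}. A direct computation gives $\pi_{*}(E_{A}^{+})=2^{\,n+1}\varepsilon_{A}$, where $\varepsilon_{A}=2^{-n}\sum_{B}(-1)^{|A\cap B|}\delta_{B}$ is the primitive idempotent of $\K[\cP(\sfn)]$ attached to the character $\chi_{A}$ of the preceding Proposition. Since $2$ is invertible and the $\varepsilon_{A}$, $A\in\cP(\sfn)$, form a complete orthogonal system of idempotents (this is exactly the orthogonality of the $\chi_{A}$), the $(E_{A}^{+})_{A}$ form a $\K$-basis of $\K[G]^{+}$ obeying $E_{A}^{+}E_{B}^{+}=2^{\,n+1}\delta_{A,B}E_{A}^{+}$; equivalently the $2^{-(n+1)}E_{A}^{+}$ are pairwise orthogonal idempotents (the ``orthogonal idempotents'' clause of the statement).

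For $Z(\K[G]^{-})$ I would argue by ranks. The conjugacy-class sums form a basis of $\K[G]^{G}$, so by Theorem \ref{th:center} the rank of $\K[G]^{G}$ is $2^{n}+1$ for $n$ even and $2^{n}+2$ for $n$ odd; subtracting $\operatorname{rank}\K[G]^{+}=2^{n}$ leaves $\operatorname{rank}Z(\K[G]^{-})=1$, resp.\ $2$. Now $e_{0}^{-}=(1-Z)\delta_{e}=\delta_{e}-\delta_{Z}$ is twice the identity element of the ring $\K[G]^{-}$, hence central in it; for $n$ even it is thus a basis of the rank-one module $Z(\K[G]^{-})$, so it generates the centre of the Clifford algebra $\ul\Cl(\ttt)\cong\K[G]^{-}$. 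For $n$ odd, the pseudoscalar $e_{\sfn}$ lies in the centre of $G$ (Theorem \ref{th:center}, or Example \ref{ex:pseudo}), so $e_{\sfn}^{-}=(1-Z)\delta_{e_{\sfn}}$ lies in $Z(\K[G]^{-})$ and is $\K$-independent of $e_{0}^{-}$ (as $e_{\sfn}\notin\{e,Z\}$); the rank count then forces $(e_{0}^{-},e_{\sfn}^{-})$ to be a basis. Assembling the two pieces gives the asserted basis of $\K[G]^{G}$.

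Finally, over a field $\K$ one has $\operatorname{char}\K\neq2$, $|G|=2^{\,n+1}$ is invertible, and $\K[G]$ is semisimple by Maschke's theorem, so its primitive central idempotents cut out its Wedderburn blocks; over a splitting field these blocks are in bijection with the isomorphism classes of irreducible $G$-modules. Up to invertible scalars the $E_{A}^{+}$ are precisely the $2^{n}$ primitive central idempotents of $\K[G]^{+}\cong\K^{2^{n}}$, giving the $2^{n}$ one-dimensional representations $\chi_{A}$ pulled back along $G\to\cP(\sfn)$, while the block $\K[G]^{-}\cong\ul\Cl(\ttt)$ carries one ``spinor'' module for $n$ even and (over a splitting field) two half-spinor modules for $n$ odd, accounted for by $e_{0}^{-}$, resp.\ by the pair $(e_{0}^{-},e_{\sfn}^{-})$; the resulting count matches the number of conjugacy classes. \textbf{The main obstacle} is the precise meaning of this last correspondence over a non-split field: when $n\equiv3\pmod4$ the centre $Z(\K[G]^{-})$ can be a quadratic field extension of $\K$ (e.g.\ $\K=\R$, where $e_{\sfn}^{2}=-1$), in which case $\K[G]^{-}$ is a \emph{single} Wedderburn block although its centre has rank $2$, so the bijection ``basis element $\leftrightarrow$ irreducible representation'' has to be read after base change to a splitting field (equivalently, as a bijection with Wedderburn components over $\K$) rather than literally.
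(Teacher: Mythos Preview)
Your argument is correct and follows essentially the same route as the paper: both proofs split $\K[G]^G$ along the $Z$-eigenspace decomposition $\K[G]^+\oplus\K[G]^-$, observe that $\K[G]^+\cong\K[\cP(\sfn)]$ is entirely central (you via two-step nilpotency, the paper implicitly), and then fill in the remaining one or two dimensions in $Z(\K[G]^-)$ by the rank count coming from the conjugacy-class enumeration of Theorem~\ref{th:center}. Your treatment is in fact more explicit in two places---you spell out the link between the $E_A^+$ and the primitive idempotents $\varepsilon_A$ of $\K[\cP(\sfn)]$ (the paper only gestures at ``orthogonal idempotents''), and your final caveat about the non-split case ($n\equiv 3\bmod 4$, centre of $\K[G]^-$ a quadratic extension) is a genuine subtlety that the paper's closing sentence glosses over.
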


\begin{proof}
The dimension of $\K[G]^G$ is equal to the number of conjugacy classes
of $G$, since the characteristic functions of conjugacy classes form a
basis. By Theorem \ref{th:center}, we know the number of conjugacy classes.
We have already exhibited $2^n$ linearly independent central functions;
thus one (resp.\ two) additional independent
 elements suffice for defining a basis. 
Since $\K[G]^G$ is $Z$-invariant, it decomposes into $Z$-eigenspaces,
so we can choose these additional elements in $\K[G]^-$, i.e., in the
Clifford algebra. 
These elements being central, and since $\K[G]^+ * \K[G]^- = 0$,
it is necessary and sufficient that these elements  belong to the center
of $\K[G]^-$.
Now, it is obvious that $e_0^-$, resp.\ $e_\sfn^-$ (for $n$ odd),
satisfy these conditions, and for reasons of dimension, there are
no other independent elements satisfying them.
This proves (1) and (2).
If $\K$ is a field, then by the general theory of representations of finite groups
(see \cite{FH}) every irreducible $G$-representation arises in $\K[G]$. The elements
$E_A^+$ define one-dimensional (hence irreducible) representations; the other elements
generate representations that may decompose into direct sums of several irreducibles
(cf.\ the following remark). 
\end{proof}

\begin{remark}
To get more information,  in case $n$ is odd, 
a further distinction should be made, 
 according to the square of the pseudoscalar being $1$ or $Z$.
 When all $t_j=1$, this corresponds to the cases
$n \equiv 1, 5$, resp.\  $n \equiv 3,7$ modulo $8$.
This determines whether the two submodules of dimension bigger than one
are isomorphic to each other, or not 
(cf.\ \cite{BtD}, p. 288, see also \cite{FH}, Exercise 3.9, p. 30).
\end{remark}

\appendix 

\section{Clifford algebras}\label{app:Clifford}

\subsection{General definitions}
Let $\K$ be a commutative base ring in 
which $2$ is invertible. 
To every  quadratic space $(V,q)$ is associated, in a functorial way,
a unital associative
 algebra $\Cl(V,q)$, its {\em Clifford algebra}, together with a
linear map $j:V \to \Cl(V,q)$, such that 
$$
\forall v \in V : \quad (j(v))^2 = q(v)1 ,
$$
and which is universal for this property,
see e.g.,  \cite{BtD, L}. 
We shall be interested in the case
$V = \K^n$, together with a quadratic form $q = q_\ttt$, where 
$\ttt = (t_1,\ldots,t_n) \in \K^n$,
  $$
  q_\ttt(x_1,\ldots,x_n) = \sum_{i=1}^n  t_i x_i^2, 
 $$
 that is, the canonical basis $e_1,\ldots,e_n$ is orthogonal, and
$q(e_i) = t_i$.
The corresponding Clifford algebra  is then denoted by
$$
\Cl_\K(\ttt) = \Cl_\K(t_1,\ldots,t_n) = \Cl(V,q_\ttt).
$$
When $\ttt = {\bf 0} =(0,\ldots,0)$, this is the exterior algebra: 
$\Cl(V,q_{\bf 0})= \wedge(\K^n)$.
When $\ttt = (1,\ldots,1,-1,\ldots,-1)$, with
$p$ terms $1$ and $q$ terms $-1$, we write
$$
\Cl_{p,q}(\K):= \Cl(\ttt) = \Cl( 1,\ldots,1,-1,\ldots,-1).
$$
Among these algebras, $\Cl(1,\ldots,1) = \Cl_{n,0}(\K)$ plays a special
role: it is {\em anchored}.

\begin{lemma}
For every $\ttt \in \K^n$, the linear map 
$\K^n \to \K^n$, $x \mapsto (t_1 x_1,\ldots,t_n x_n)$
induces an algebra morphism
$$
\Cl_\K(t_1^2,\ldots,t_n^2) \to \Cl_\K(1,\ldots,1) .
$$
\end{lemma}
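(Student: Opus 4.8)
The plan is to deduce the lemma from the universal property (equivalently, the functoriality) of the Clifford algebra construction recalled above. Write $L:\K^n\to\K^n$ for the linear map $x\mapsto(t_1x_1,\ldots,t_nx_n)$. The entire content of the statement is that $L$ is an \emph{isometry} from $(\K^n,q_{(t_1^2,\ldots,t_n^2)})$ to $(\K^n,q_{(1,\ldots,1)})$; once this is checked, functoriality of $(V,q)\mapsto\Cl(V,q)$ produces the asserted algebra morphism automatically.

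\textbf{Key step: the isometry identity.} First I would verify that $q_{(1,\ldots,1)}\circ L=q_{(t_1^2,\ldots,t_n^2)}$. For $x=(x_1,\ldots,x_n)\in\K^n$,
\[
q_{(1,\ldots,1)}\bigl(L(x)\bigr)=\sum_{i=1}^n (t_ix_i)^2=\sum_{i=1}^n t_i^2x_i^2=q_{(t_1^2,\ldots,t_n^2)}(x).
\]
This is why the source algebra carries the form with the $t_i^2$ rather than the $t_i$: the squares appear precisely because $L$ scales the $i$-th coordinate by $t_i$ and the standard form $q_{(1,\ldots,1)}$ then squares each coordinate. Note this uses only commutativity of $\K$; the standing hypothesis that $2$ be invertible (under which $\Cl$ is defined at all) is what is needed for everything else.

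\textbf{Conclusion via the universal property.} Let $j':\K^n\to\Cl_\K(1,\ldots,1)$ be the canonical linear map, so $j'(y)^2=q_{(1,\ldots,1)}(y)\,1$. Then $f:=j'\circ L:\K^n\to\Cl_\K(1,\ldots,1)$ satisfies $f(v)^2=j'(Lv)^2=q_{(1,\ldots,1)}(Lv)\,1=q_{(t_1^2,\ldots,t_n^2)}(v)\,1$ for all $v$, by the identity just proved. Hence, by the universal property of $\Cl_\K(t_1^2,\ldots,t_n^2)=\Cl(\K^n,q_{(t_1^2,\ldots,t_n^2)})$, there is a unique unital algebra morphism $\Phi:\Cl_\K(t_1^2,\ldots,t_n^2)\to\Cl_\K(1,\ldots,1)$ with $\Phi\circ j=f$; on the canonical basis it is given by $e_i\mapsto t_i e_i$. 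This is the morphism ``induced by $L$''. There is essentially no obstacle: the proof is a one-line computation packaged through the universal property, and the only point requiring a moment's care is matching the quadratic forms correctly on the two sides, as indicated above.
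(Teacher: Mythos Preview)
Your proof is correct and follows exactly the same approach as the paper: verify that $L$ is a morphism of quadratic spaces (the isometry identity $q_{(1,\ldots,1)}\circ L=q_{(t_1^2,\ldots,t_n^2)}$), then invoke the universal property of the Clifford algebra. The paper's proof is simply a terser statement of the same argument.
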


\begin{proof}
The linear map  is a morphism of quadratic spaces from
$(\K(t_1^2,\ldots,t_n^2))$ to  $(\K,(1,\ldots,1) )$, and  induces,
by the universal property, an algebra morphism.
\end{proof}

We call {\em anchor} the morphism defined in the lemma, and say that
the Clifford algebra $\Cl_\K(t_1^2,\ldots,t_n^2) $ is {\em anchored}.
Of course, when $t_i \in \{ \pm 1 \}$, then the only anchored Clifford algebra
is $ \Cl_\K(1,\ldots,1)$. 
The terminology is motivated by \cite{BeH}: indeed, the anchored Clifford algebras should
be the correct ``super-analog'' of the anchored tangent algebras defined in loc.cit.,
and ``differential calculus'' then is the theory describing the contraction for $\ttt \to 0$
(which in the graded case still remains to be worked out). 

\subsection{Clifford basis}
The standard basis $e_1,\ldots,e_n$ of $\K^n$ defines $n$ independent
 elements in the Clifford algebra $\Cl_\K(\ttt)$.
For a subset $A = \{ a_1 , \ldots , a_k \} \subset \sfn$, with
$a_1 < \ldots < a_n$, we define the element
$$
e_A := e_{a_1} \cdots e_{a_k} \in \Cl_\K(\ttt).
$$

\begin{theorem}
The elements  $(e_A)_{A\in \cP(\sfn)}$ form a $\K$-basis of the
Clifford algebra $\Cl_\K(\ttt)$, called its {\em Clifford basis}.
The Clifford product of two basis elements is given by 
$$
e_A e_B = (-1)^{m(A,B)}  \prod_{k \in A \cap B} t_k  \cdot 
 e_{A \Delta B},
$$
where $e_\emptyset = 1$, 
 $A\Delta B$ is the symmetric difference of $A,B \subset \sfn$, and 
$$
m(A,B) = 
\vert  \{ (a,b) \in A \times B \mid \, a > b \} \vert .
$$
\end{theorem}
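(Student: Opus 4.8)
The plan is to prove the statement in three steps: (i) the family $(e_A)_{A\in\cP(\sfn)}$ spans $\Cl_\K(\ttt)$; (ii) the multiplication rule holds; (iii) the $e_A$ are $\K$-linearly independent. Steps (i) and (ii) are formal and essentially identical to computations already made in the main text, while (iii) is the only place where genuine input about Clifford algebras is needed. For (i): by the universal property, $\Cl_\K(\ttt)$ is generated as a unital $\K$-algebra by the images $e_1,\dots,e_n$ of the standard basis, and polarizing $j(v)^2 = q_\ttt(v)\,1$ for the diagonal form $q_\ttt$ gives the relations $e_ie_j = -e_je_i$ for $i\neq j$ and $e_i^2 = t_i\,1$. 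Hence any monomial in the $e_i$ can be rewritten — by transposing adjacent distinct letters (each transposition costing a factor $-1$) and replacing an adjacent equal pair $e_ie_i$ by the scalar $t_i$ — as a scalar multiple of a single $e_A$, so the $e_A$ span.

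For (ii), I would carry out exactly this reordering on $e_Ae_B = e_{a_1}\cdots e_{a_k}\,e_{b_1}\cdots e_{b_l}$, merging the two increasing blocks into one increasing word by pulling the letters $e_{b_1},e_{b_2},\dots$ leftward into place, one at a time. Since the $e_{b_j}$ are treated in increasing order of their indices, at the moment $e_{b_j}$ is moved it still meets every $e_{a_i}$ with $a_i\in A$, $a_i>b_j$ (none of those has yet been cancelled), and each such meeting is a transposition of distinct letters contributing a factor $-1$; the total number of such transpositions is $|\{(a,b)\in A\times B:a>b\}| = m(A,B)$. Each index $c\in A\cap B$ occurs once in each block, and when its two occurrences become adjacent they collapse to the central scalar $e_c^2 = t_c$; these collapses contribute the factor $\prod_{k\in A\cap B}t_k$ and delete precisely the indices of $A\cap B$, leaving $e_{A\Delta B}$. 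This yields $e_Ae_B = (-1)^{m(A,B)}\bigl(\prod_{k\in A\cap B}t_k\bigr)e_{A\Delta B}$; it is the same computation as the one establishing the cocycle $\gamma$ in Theorem~\ref{th:absolute-cocycle}, with $Z$ replaced by $-1$ and the squares $e_c^2$ now arbitrary scalars $t_c\in\K$ rather than elements of $\{1,Z\}$.

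For (iii), the multiplication rule alone does not suffice: one must show $\dim_\K\Cl_\K(\ttt) = 2^n$. I would do this by the standard Clifford-module argument: equip the exterior algebra $W = \wedge\K^n$ (free over $\K$ with basis $(e_A)_A$) with the operators $\rho(v) = \varepsilon(v) + \iota_{q_\ttt}(v)$, where $\varepsilon(v)$ is left multiplication by $v$ and $\iota_{q_\ttt}(v)$ is the antiderivation contraction by the bilinear form of $q_\ttt$; a direct check gives $\rho(v)^2 = q_\ttt(v)\,\id_W$, so by the universal property $\rho$ extends to an algebra morphism $\bar\rho:\Cl_\K(\ttt)\to\End_\K(W)$, and since $\iota_{q_\ttt}(e_i)$ annihilates $1$ and any wedge monomial not containing $e_i$, one gets $\bar\rho(e_A)(1) = e_A\in W$ for every $A$. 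As the $e_A\in W$ are distinct basis vectors, the operators $\bar\rho(e_A)$, hence the elements $e_A\in\Cl_\K(\ttt)$, are linearly independent; together with (i) this proves they form a basis. For $t_i\in\{\pm1\}$ — in particular for $\Cl_{p,q}$ — linear independence is already contained in Theorem~\ref{th:Cl} together with the cardinality count of Theorem~\ref{th:shuffle}, so only general $t_i\in\K$ needs this extra argument; cf.\ also \cite{BtD}. The main obstacle is exactly this step (iii): the spanning and the sign bookkeeping are routine and already done elsewhere in the paper, whereas the rank computation is the one substantive fact about Clifford algebras that must be imported.
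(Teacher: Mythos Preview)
Your proposal is correct, and parts (i) and (ii) are essentially the paper's own argument, only spelled out in more detail: the paper states that the $e_A$ generate $\Cl_\K(\ttt)$, then derives the product formula by the same reorder-and-collapse bookkeeping, explicitly referring back to the proof of Theorem~\ref{th:absolute-cocycle} just as you do.

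The one genuine difference is your step (iii). The paper does not prove linear independence at all; it simply cites \cite{BtD} for the fact that $\dim_\K \Cl_\K(\ttt)=2^n$ and concludes immediately. You instead supply the standard Clifford-module argument on $\wedge\K^n$ via $\rho(v)=\varepsilon(v)+\iota_{q_\ttt}(v)$, which makes the proof self-contained over an arbitrary commutative base ring. This buys you something real: the citation to \cite{BtD} is to a textbook working over fields (indeed over $\R$), whereas your argument works verbatim over any commutative $\K$, matching the generality claimed in the appendix. Conversely, the paper's route is shorter and keeps the appendix focused on the combinatorial content, which is all that is actually used elsewhere in the text.
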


\begin{proof}
The dimension of the Clifford algebra is $2^n$
(cf.\ \cite{BtD}), and the elements
$e_A$ generate it, hence form a basis. 
To compute the product $e_A e_B$, using the Clifford rules 
$$
e_k^2 = t_k =t_k e_\emptyset,\qquad
\forall k\not= \ell : \,
 e_k e_\ell = - e_\ell e_k,
$$
we see that $e_A e_B$ is a multiple of $e_{A \Delta B}$. 
Terms with indices $k=\ell \in A \cap B$ give rise to the scalar
$ \prod_{k \in A \cap B} t_k $.
Terms with indices $(k,\ell) \in A \times B$ 
have to be exchanged  when $k > \ell$ (giving rise to a sign change),
and are not altered when $k < \ell$
(the arguments are the same as those of Theorem  \ref{th:absolute-cocycle}).
\end{proof}

Cf.\   page 12 - 14 of \cite{L} for a similar presentation of the preceding
result.

\section{Classification of Clifford groups and algebras}\label{app:classification}

\subsection{Graded and ungraded product}
Both the classification of real Clifford algebras and the one of discrete Clifford groups proceeds
by decomposing them into {\em ungraded} tensor products of elementary algebras, resp.\ products
of groups. On the level of algebras, we have, e.g.,
$\Cl_\R(1,-1,-1) \cong M_4(\bC) \cong \bC \otimes M_2(\R) \otimes M_2(\R)$.
On the level of groups, recall the ungraded product of groups
$G_1 \times_Z G_2$ (quotient of $G_1 \times G_2$ under $(x_1,Zx_2) \sim (Zx_1,x_2)$, written
also $G_1 \times_{C_2} G_2$ when $\Gamma = \Z/2 \Z$).
We'll see that all groups $Q(\ttt)$ can be decomposed as ungraded product 
$\times_{C_2}$ of groups of type
$$
C_2, \quad C_2 \times C_2, \quad C_4 , \quad D=D_4, \quad Q ,
$$
corresponding to the decomposition of Clifford algebras into tensor products involving
$$
\R, \quad \R \oplus \R, \quad \bC, \quad M_2(\R), \quad \bH .
$$ 
The key lemma for classifying discrete Clifford groups is:

\begin{lemma}\label{la:Q-iso}
For all $n \geq 3$ and $\ttt \in \{ 1,Z \}^n$, there is a group
isomorphism 
$$
Q(\ttt) \, \cong \,   Q(t_1,t_2) \times_Z Q( Z t_1 t_2 t_3,\ldots, Z t_1 t_2 t_n).
$$
\end{lemma}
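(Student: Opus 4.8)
The plan is to realize both tensor factors explicitly as commuting subgroups inside $Q(\ttt)$. For the first factor, use the subgroup generated by $e_1,e_2$ and $Z$; by Theorem \ref{th:shuffle}(5) it is a copy of $Q(t_1,t_2)$. For the second factor, introduce the ``twisted'' generators $f_i := e_1e_2e_i \in Q(\ttt)$ for $i=3,\ldots,n$ (this is the classical Clifford trick underlying $\Cl(U\oplus W)\cong \Cl(U)\widehat\otimes\Cl(W)$), and use the subgroup generated by $f_3,\ldots,f_n$ and $Z$.

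First I would carry out the elementary computations in $Q(\ttt)$, using only the defining relations of Theorem \ref{th:shuffle}(5) (equivalently the commutation rule (\ref{eqn:internal}) together with $e_i^2 = t_i$ and centrality of $Z$): each $f_i$ commutes with $e_1$ and with $e_2$; $f_i^2 = Z\,t_1t_2t_i$; $f_kf_\ell = Z\, f_\ell f_k$ for $3\le k<\ell\le n$; and $Z$ remains central of order two. These say precisely that $e_1,e_2,Z$ satisfy the presentation of $Q(t_1,t_2)$, that $f_3,\ldots,f_n,Z$ satisfy the presentation of $Q(Zt_1t_2t_3,\ldots,Zt_1t_2t_n)$ with the correct squares, and that the two families of generators commute with one another. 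Since the images of the two presentations commute and share the common central element $Z$ with $Z^2=e$, the von Dyck property applied to each factor plus the universal property of the direct product yields a homomorphism $Q(t_1,t_2)\times Q(Zt_1t_2t_3,\ldots,Zt_1t_2t_n)\to Q(\ttt)$ killing the diagonal subgroup $\langle(Z,Z)\rangle$, hence a homomorphism $\bar\phi$ out of the ungraded product $\times_Z$. It is surjective because its image contains $e_1,e_2,Z$ and all $f_i$, hence each $e_i=(e_1e_2)^{-1}f_i$. Finally, by Theorem \ref{th:shuffle}(1) the source has cardinality $(2^{3}\cdot 2^{\,n-1})/2 = 2^{\,n+1} = |Q(\ttt)|$, so the surjection $\bar\phi$ between finite groups of equal order is an isomorphism.

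I expect the only real work — and the one place to be careful — to be the computation $f_i^2 = Z\,t_1t_2t_i$ and, relatedly, pinning down the exact power of $Z$ in $f_kf_\ell$: reordering the products $e_1e_2e_i\cdot e_1e_2e_i$ (resp.\ $e_1e_2e_k\cdot e_1e_2e_\ell$) produces several factors of $Z$ via (\ref{eqn:internal}), and one must verify they combine to exactly one $Z$, so that the $t$-parameter of the $i$-th generator of the second factor comes out as $Zt_1t_2t_i$ as stated. Everything else is routine manipulation with the relations, and the cardinality bookkeeping uses only Theorem \ref{th:shuffle}(1) together with the definition of the ungraded product $\times_Z$.
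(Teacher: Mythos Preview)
Your proposal is correct and follows essentially the same route as the paper: the paper introduces the very same elements $b_k:=e_1e_2e_k$ (your $f_k$), checks that they commute with $e_1,e_2$, computes $b_k^2=Zt_1t_2t_k$ and $b_kb_\ell=Zb_\ell b_k$, and then invokes the presentation of Theorem~\ref{prop:relations}(5) together with a cardinality count. If anything, your write-up is slightly more explicit than the paper's in packaging the final step as a surjection $\bar\phi$ from the $\times_Z$-product (via the universal property of the direct product and killing the diagonal $\langle(Z,Z)\rangle$) and concluding by the equality of orders $2^{n+1}$; the paper leaves this last identification implicit.
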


\begin{proof}
Elements of $Q(\ttt)$ are either of the form
$e_A$ or $Z e_A$,  with $A \in \cP(\sfn)$.
When $A \subset \{ 1 , 2 \}$, we get the $8$ elements of the first factor
$Q(t_1,t_2) = \{ e_0,e_1,e_2,e_{12}, Z ,Z e_1,Z e_2,Z e_{12} \}$.
Next, let
$H \subset Q(\ttt)$ be the subgroup generated by the elements
$$
b_3:=e_{1} e_2 e_3, \quad b_4:= e_{1} e_2 e_4, \quad  \ldots, \quad b_n:=  e_{1} e_2 e_n 
$$
Elements of $Q(t_1,t_2)$ and $H$ commute: indeed,
for all $k=3,\ldots,n$, 
$$
e_1 b_k = e_1 e_1 e_2 e_k =
Z^2 e_1 e_2 e_k e_1 = b_k e_1, \qquad
e_2 b_k = 
e_2 e_1 e_2 e_k = Z^2 e_1 e_2 e_k e_2 = 
b_k e_2,
$$
hence $Q(t_1,t_2)$ commutes with the generators of $H$,
hence with $H$.
Moreover,
 $H$ and $Q(t_1,t_2)$ together generate the group $Q(\ttt)$, and therefore
$H$ contains at least $2^{n-1}$ elements. 
We compute, for $3 \leq k < \ell $ (using $e_i^2 = t_i e_0$)
\begin{align*}
b_k^2 & = e_1 e_2 e_k e_1 e_2 e_k = Z^3 e_1^2 e_2^2 e_k^2 =
Z t_1 t_2 t_3 e_1 e_{2} e_k = Z t_1 t_2 t_3 \, b_k,
\\
b_k b_\ell & = e_1 e_2 e_k e_1 e_2 e_\ell =
Z e_1 e_2 e_\ell e_1 e_2 e_k = Z \, b_\ell b_k .
\end{align*}
These are the defining relations of the  
group $Q(Zt_1 t_2 t_3,\ldots, Z t_1 t_2 t_n)$,
hence, by Theorem \ref{prop:relations},
there is a surjective morphism from 
$Q(Z t_1 t_2 t_3,\ldots, Z t_1 t_2 t_n)$ to $H$,
which must be an isomorphism
for reasons of cardinality. 
 \end{proof}


In particular, the lemma implies
for $(t_1,t_2) = (1,1)$, or $(1,Z)$, or $(Z,Z)$:
\begin{align*}
Q(1,1,t_3,\ldots,t_n) & \cong Q_{2,0} \times_Z Q(Zt_3,\ldots,Zt_n) \cong
D_4  \times_Z Q(Zt_3,\ldots,Zt_n) ,
\\
Q(1,Z,t_3,\ldots,t_n) & \cong Q_{1,1} \times_Z Q(t_3,\ldots,t_n) \cong
D_4  \times_Z Q(t_3,\ldots,t_n) ,
\\
Q(Z,Z,t_3,\ldots,t_n) & \cong Q_{2,0} \times_Z Q(Zt_3,\ldots,Zt_n) \cong
Q  \times_Z Q(Z t_3,\ldots Z t_n). 
\end{align*}
From this, the first statements of the following corollary immediately
follow:

\begin{corollary}\label{cor:period}
For all $p,q \in \N$,
\begin{align*}
Q_{p+2,q} & \cong Q_{2,0} \times_Z Q_{q,p} \cong
D_4  \times_Z Q_{q,p}  ,
\\
Q_{p+1,q+1} & \cong Q_{1,1} \times_Z Q_{p,q} \cong
D_4  \times_Z Q_{p,q} ,
\\
Q_{p,q+2} & \cong Q_{0,2} \times_Z Q_{q,p} \cong
Q  \times_Z Q_{q,p} .
\end{align*}
Applying this several times, 
using notation $G^3_Z = G \times_Z G \times_Z G$, etc., 
\begin{align*}
Q_{p+4,q} & \cong Q_{2,0} \times_Z Q_{0,2}\times_Z Q_{p,q} \cong
D_4  \times_Z Q \times_Z Q_{p,q} \cong Q_{p,q+4},
\\
Q_{p+2,q+2} & \cong 
D_4 \times_Z D_4 \times_Z Q_{p,q} \cong
Q \times_Z Q \times_Z Q_{p,q} ,
\\
Q_{p+8,q} & \cong Q_{p,q+8} \cong Q_{p+4,q+4} \cong
(D_4)^4_Z \times_Z Q_{p,q} \cong Q^4_Z \times_Z Q_{p,q}.
\end{align*}
Moreover, for any $\Z/2 \Z$-graded group $G$,
$$
G \times_Z Q_{1,0} = G \times_{C_2} (C_2 \times C_2) \cong G \times C_2.
$$
\end{corollary}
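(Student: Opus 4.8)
The plan is to derive the whole corollary from the three displayed isomorphisms immediately preceding it (which are the specializations of Lemma \ref{la:Q-iso} to $(t_1,t_2)\in\{(1,1),(1,Z),(Z,Z)\}$), together with three formal facts about the ungraded central product $\times_Z$ that I take as already established: it is associative (Item (3) of Theorem \ref{th:graded-group}); it is symmetric up to isomorphism because $\Gamma=\Z/2\Z$ is skew (Item (4) of Theorem \ref{th:graded-group}); and $Q(\ttt)$ depends on $\ttt$ only up to permutation of its entries, via the reordering isomorphisms $\phi_{(\ttt,\sss)}$.

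First I would prove the three base periodicity relations. For $Q_{p+2,q}$ the first two entries of $\ttt$ are already $(1,1)$, so the first displayed relation applies verbatim and gives $D_4\times_Z Q(Zt_3,\ldots,Zt_n)$; since multiplying a label by $Z$ interchanges $1$ and $Z$, the tail $(\underbrace{1,\ldots,1}_{p},\underbrace{Z,\ldots,Z}_{q})$ becomes $(\underbrace{Z,\ldots,Z}_{p},\underbrace{1,\ldots,1}_{q})$, which after reordering is $Q_{q,p}$; hence $Q_{p+2,q}\cong D_4\times_Z Q_{q,p}$. For $Q_{p+1,q+1}$ I first reorder $\ttt$ so that a pair $(1,Z)$ sits in front, apply the middle relation (here $Zt_1t_2=1$, so the tail is unchanged) and read off $D_4\times_Z Q_{p,q}$. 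For $Q_{p,q+2}$ I bring a pair $(Z,Z)$ to the front and apply the third relation, obtaining $Q\times_Z Q_{q,p}$.

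Next I would obtain the iterated relations by chaining the three base ones and invoking associativity and symmetry of $\times_Z$. Applying the first relation and then the third gives $Q_{p+4,q}\cong D_4\times_Z Q\times_Z Q_{p,q}$, and running the computation for $Q_{p,q+4}$ in the opposite order yields the same product, whence $Q_{p+4,q}\cong Q_{p,q+4}$. Applying the first (resp. third) relation twice gives $Q_{p+2,q+2}\cong D_4\times_Z D_4\times_Z Q_{p,q}$ (resp. $Q\times_Z Q\times_Z Q_{p,q}$); specializing to $p=q=0$, and using that $Q_{0,0}=C_2$ is the unit for $\times_Z$, produces the key identity $D_4\times_Z D_4\cong Q_{2,2}\cong Q\times_Z Q$. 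Substituting this identity into two applications of the period-$4$ relation then gives the period-$8$ statement $Q_{p+8,q}\cong(D_4)^4_Z\times_Z Q_{p,q}\cong Q^4_Z\times_Z Q_{p,q}$, and likewise $Q_{p+8,q}\cong Q_{p,q+8}\cong Q_{p+4,q+4}$.

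Finally, for the ``Moreover'' statement I would argue internally. Since $Q_{1,0}=C_2\times C_2=\{1,Z_1,e_1,Z_1e_1\}$ with $e_1^2=1$, and since the central product $G\times_Z Q_{1,0}$ is a quotient of the \emph{direct} product, the two factors commute and $Z\in G$ is identified with $Z_1$. Thus $[e_1]$ is a central element of order $2$; the copy $\{[g,1]\mid g\in G\}$ is a subgroup isomorphic to $G$ meeting $\langle[e_1]\rangle$ trivially; and the two together generate a group of order $|G|\cdot 4/2=2|G|$. Hence $G\times_Z Q_{1,0}$ is the internal direct product of $G$ and $\langle[e_1]\rangle\cong C_2$, i.e. $G\times_Z Q_{1,0}\cong G\times C_2$. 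I expect the only real difficulty to be the bookkeeping in the iteration: consistently tracking how $\times_Z$ reorders factors, how the operation ``multiply the tail by $Z$'' swaps $1\leftrightarrow Z$, and confirming that the two expressions for the period-$8$ relation coincide via $D_4\times_Z D_4\cong Q\times_Z Q$. None of these steps is deep, but they must be applied carefully and in the right order.
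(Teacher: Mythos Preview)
Your argument is correct and matches the paper's approach: the paper also derives the first three relations directly from the three displayed specializations of Lemma~\ref{la:Q-iso}, obtains the iterated ones by chaining, and dispatches the ``Moreover'' by the one-line observation $(C_2\times C_2)/C_2\cong C_2$, which is exactly the content of your internal-direct-product argument. One small correction: the associativity and symmetry you invoke for the \emph{ungraded} central product $\times_Z$ are not Items (3)--(4) of Theorem~\ref{th:graded-group} (that theorem concerns the \emph{graded} product $\widehat\times_Z$); they are the more elementary facts from Theorem~\ref{th:monoidal-category} (associativity of $\times_z$ on the underlying sets, with the ordinary direct-product group law passing to the quotient) and the standard flip $(g_1,g_2)\mapsto(g_2,g_1)$, which needs no skewness hypothesis.
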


\begin{proof}
The last statement is obvious from  $(C_2 \times C_2) / C_2 = C_2$.
\end{proof}

\begin{remark}\label{rk:isos}
In particular, the corollary gives the group isomorphisms
\begin{align*}
Q_{2,2} &  \cong D_4 \times_Z D_4 \cong Q \times_Z Q,
\\
Q_{1,2} & \cong C_4 \times_Z D \cong C_4 \times_Z Q.
\end{align*}
On the level of real Clifford algebras, this corresponds to 
the algebra isomorphisms
$\bH \otimes \bH  \cong M(2,\R)\otimes M(2,\R)$ (in turn, this algebra is
 isomorphic to $M(4,\R)$) and
$\bH \otimes \bC \cong M(2,\bC)$.
\end{remark}

\begin{notation}
To abbreviate, in the following,
we are going to write $D:=D_4$, $C:=C_4$,
$V:=C_2^2:=C_2 \times C_2$
(the Klein four-group),
 and since all direct products are
of the form $\times_Z$, we omit this sign, and just write it
as juxtaposition, and we write $D^2=DD$, etc. For instance,
$DQ = D \times_Z Q$,
$D^2 = D \times_Z D$. The isomorphisms from Remark \ref{rk:isos} thus
read $D^2 \cong Q^2$ and $DC_4 \cong QC_4$.
Note also that
$\times C_2 = \times_Z C_2^2$, so, e.g.,
$D C_2^2 = D \times_Z C_2^2 = D \times C_2$.
Using these isomorphisms and notation, every group $Q_{p,q}$ is
isomorphic to one in ``normal form''
$$
C_2, \quad D^k C_2^2, \quad D^k C_4, \quad
D^k Q, \quad Q C_2^2, \quad D^k Q C_2^2 .
$$
\end{notation}

\begin{theorem}\label{th:classification}
The discrete Clifford groups $Q_n$, resp.\ $Q_{0,n}$, are given by

\msk
\begin{center}
\begin{tabular}{l | l | l | l   |     l | l   |  l | l    |   l | l  | }
$n$ & $0$   &  $1$ &  $2$  & $3$ & $4$ & $5$ & $6$ & $7$ & $8$  \cr
\hline
$Q_n$  & $C_2$ & $C_2^2$ & $D$ & $DC_4$ & $DQ$ & $DQC_2^2$ &
$D^2 Q$ & $D^3 C_4$ & $D^4$ 
\\
$Q_{0,n}$  & $C_2$ & $C_4$ & $Q$ & $QC_2^2$ & $DQ$ & $D^2C_4$ &
$D^3$ & $D^3 C_2^2$ & $D^4$ 
\end{tabular}
\end{center}
\msk

\nin
and we have the periodicity relations:

\ssk
$Q_{n+4} \cong DQ \, Q_n, \quad $ 
$Q_{n+8} \cong D^4 Q_n, \quad $
$Q_{0,n+4} \cong DQ \, Q_{0,n}, \quad $
$Q_{0,n+8} \cong D^4 \, Q_{0,n}$.
\end{theorem}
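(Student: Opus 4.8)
The plan is to derive everything from Corollary~\ref{cor:period} and the two ``coincidental'' isomorphisms $D^2\cong Q^2$, $DC_4\cong QC_4$ of Remark~\ref{rk:isos}, together with the formal properties of the ungraded product $\times_Z$ from the start of this appendix, which I write by juxtaposition as in the Notation block: analogously to (indeed more simply than) the graded statements of Theorems~\ref{th:monoidal-category} and~\ref{th:graded-group}, one checks directly that $\times_Z$ is a bifunctor on centrally $\Z/2\Z$-graded groups, associative and commutative on isomorphism classes, with unit $C_2$ (so $G\,C_2\cong G$), and that $G\,C_2^2\cong G\times C_2$ (the last line of Corollary~\ref{cor:period}). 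First I would record the periodicity relations: Corollary~\ref{cor:period} gives $Q_{p+4,q}\cong DQ\,Q_{p,q}$ and $Q_{p+4,q}\cong Q_{p,q+4}$, hence also $Q_{p,q+4}\cong DQ\,Q_{p,q}$; specialising $q=0$, resp.\ $p=0$, yields $Q_{n+4}\cong DQ\,Q_n$ and $Q_{0,n+4}\cong DQ\,Q_{0,n}$, and iterating once and rewriting $D^2Q^2$ as $D^4$ via $Q^2\cong D^2$ gives the period-eight relations $Q_{n+8}\cong D^4Q_n$ and $Q_{0,n+8}\cong D^4Q_{0,n}$ (equivalently, the cases $q=0$, resp.\ $p=0$, of $Q_{p+8,q}\cong D^4\,Q_{p,q}$ from Corollary~\ref{cor:period}).

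\textbf{Base cases.} Starting from the values $Q_0=Q_{0,0}=C_2$, $Q_1=C_2^2$, $Q_{0,1}=C_4$, $Q_2=D$, $Q_{0,2}=Q$ already established, I would climb the ladder with the ``$+2$'' isomorphisms $Q_{p+2,q}\cong D\,Q_{q,p}$ and $Q_{p,q+2}\cong Q\,Q_{q,p}$ of Corollary~\ref{cor:period}, each entry below using only entries from strictly earlier rows:
\begin{align*}
Q_{0,3}&\cong Q\,Q_{1,0}=QC_2^2, & Q_{3}&\cong D\,Q_{0,1}=DC_4,\\
Q_{0,4}&\cong Q\,Q_{2,0}=DQ, & Q_{4}&\cong D\,Q_{0,2}=DQ,\\
Q_{0,5}&\cong Q\,Q_{3,0}=DQC_4\cong D^2C_4, & Q_{5}&\cong D\,Q_{0,3}=DQC_2^2,\\
Q_{0,6}&\cong Q\,Q_{4,0}=DQ^2\cong D^3, & Q_{6}&\cong D\,Q_{0,4}=D^2Q,\\
Q_{0,7}&\cong Q\,Q_{5,0}=DQ^2C_2^2\cong D^3C_2^2, & Q_{7}&\cong D\,Q_{0,5}=D\,(D^2C_4)=D^3C_4,\\
Q_{0,8}&\cong D^4\,Q_{0,0}\cong D^4, & Q_{8}&\cong D^4\,Q_{0,0}\cong D^4,
\end{align*}
where the simplifications use $QC_4\cong DC_4$ (for $Q_{0,5}$, and then $Q_7$), $Q^2\cong D^2$ (for $Q_{0,6}$ and $Q_{0,7}$), the absorption $G\,C_2^2\cong G\times C_2$, and $G\,C_2\cong G$ together with $Q_{0,0}=C_2$ (for $n=8$). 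Comparison with the table closes the base cases. The general case then follows from the period-eight relations of the first paragraph: for $n\ge 8$ write $n=8k+r$ with $0\le r\le 7$, iterate to get $Q_n\cong D^{4k}Q_r$ and $Q_{0,n}\cong D^{4k}Q_{0,r}$, and substitute the tabulated $Q_r$, $Q_{0,r}$.

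\textbf{Main obstacle.} Since the substantive content is already packaged in Lemma~\ref{la:Q-iso}, Corollary~\ref{cor:period} and Remark~\ref{rk:isos}, the only real difficulty is bookkeeping hygiene. Because $\times_Z$ depends on the choice of the distinguished central element $Z$ in each factor, every isomorphism invoked must be an isomorphism of \emph{centrally} $\Z/2\Z$-graded groups, i.e.\ must carry $Z$ to $Z$; so I would fix once and for all $Z=-1$ inside each of $D$, $Q$, $C_4$, $C_2$ and the grading-distinguished involution inside $C_2^2$, and check that Lemma~\ref{la:Q-iso}, Corollary~\ref{cor:period} and Remark~\ref{rk:isos} honour these choices (they do, being assembled from the braiding isomorphisms of Theorem~\ref{th:graded-group}(4) and obvious direct-product identifications). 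A companion pitfall worth flagging explicitly is that the monoid of isomorphism classes under $\times_Z$ is \emph{not} cancellative ($D^2\cong Q^2$ while $D\not\cong Q$), so one may never ``cancel a factor''; the normal forms must be reached solely through the two sanctioned rewrites $Q^2\cong D^2$ and $QC_4\cong DC_4$, which — as the table above shows — always suffices.
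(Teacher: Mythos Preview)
Your proof is correct and follows exactly the route the paper intends: the paper's own proof is the single line ``Repeated application of Corollary~\ref{cor:period}'', and you have simply unpacked that application explicitly, invoking the rewrites $Q^2\cong D^2$ and $QC_4\cong DC_4$ of Remark~\ref{rk:isos} (themselves consequences of the corollary) at the right moments. Your bookkeeping caveat about $\times_Z$ requiring $Z$-preserving isomorphisms is well taken and consistent with how Lemma~\ref{la:Q-iso} is proved.
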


\begin{proof}
Repeated application of Corollary \ref{cor:period}.
\end{proof}

For  sake of completeness, we give in  Section \ref{app:Qpq}
the full  ``periodic table'' of the groups $Q_{p,q}$, together with the
 corresponding
\href{https://en.wikipedia.org/wiki/Classification_of_Clifford_algebras#Classification}{classification of
Clifford algebras}.
Due to the 
relation $Q_{p+1,q+1} = Q_{p,q} \times_C D_4 = D \, Q_{p,q}$,
this table is best presented as a triangle (like Pascal's triangle).
This triangle
is not symmetric with respect to the ``main diagonal''
$Q_{n,n}$, but it is symmetric with respect to the ``secondary diagonal''
 $Q_{n+1,n}=C_2 \times (D_4)^n_Z = D^n \, C_2^2$, by the following lemma:

\begin{lemma} \label{la:diagonal} 
We have an isomorphism of (ungraded) groups:
$$
Q(t_1,\ldots,t_n) \cong Q(t_1,Zt_1 t_2,\ldots,Z t_1 t_n)
$$
In particular, there is a group isomorphism
$Q_{1+p,q} \cong Q_{1+q,p}$. 
\end{lemma}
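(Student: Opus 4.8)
The plan is to produce, \emph{inside} $G:=Q(\ttt)$, a relabelled system of generators obeying the defining relations of $Q(\sss)$ for the tuple $\sss:=(t_1,\,Zt_1t_2,\,\ldots,\,Zt_1t_n)$, and then finish by a cardinality count. Note first that $\sss\in\{1,Z\}^n$: each of $t_1,t_k$ lies in $\{1,Z\}$ and $Z$ has order two, so $Zt_1t_k\in\{1,Z\}$ for every $k\ge2$.

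Concretely, I would set $f_1:=e_1$ and $f_k:=e_1e_k$ for $2\le k\le n$, and verify — using only $e_i^2=t_ie_0$, the rule $e_ke_\ell=Ze_\ell e_k$ for $k<\ell$, and centrality of $Z$ (so $Z^2=e_0$) — the identities
\begin{align*}
f_1^2 &= t_1e_0, & f_k^2 &= e_1e_ke_1e_k = Ze_1^2e_k^2 = (Zt_1t_k)e_0 \quad (k\ge2),\\
f_1f_\ell &= e_1^2e_\ell = t_1e_\ell, & f_\ell f_1 &= e_1e_\ell e_1 = Ze_1^2e_\ell = Zt_1e_\ell \quad (\ell\ge2),\\
f_kf_\ell &= Ze_1^2e_ke_\ell = Zt_1e_ke_\ell, & f_\ell f_k &= Ze_1^2e_\ell e_k = Zt_1e_\ell e_k = t_1e_ke_\ell \quad (2\le k<\ell),
\end{align*}
so that $f_kf_\ell=Zf_\ell f_k$ for all $k<\ell$, while $Z$ remains central of order two and commutes with each $f_k$. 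These $f_1,\ldots,f_n$ together with $Z$ generate $G$, since $e_1=f_1$ and $e_k=f_1^{-1}f_k$ with $f_1^{-1}\in\langle f_1,Z\rangle$ (as $f_1^2\in\{e_0,Z\}$). Hence the universal property in Theorem \ref{prop:relations}(5) yields a surjective homomorphism $Q(\sss)\to G$ sending the $i$-th generator to $f_i$; since $|Q(\sss)|=2^{n+1}=|G|$ by Theorem \ref{prop:relations}(1), it is an isomorphism. (The map need not respect gradings — the $f_k$ with $k\ge2$ are even in $G$ — but the grading condition on $Z$ plays no role for the underlying abstract group, which is all that is claimed.)

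For the final sentence I would specialise to $\ttt=(\underbrace{1,\ldots,1}_{1+p},\underbrace{Z,\ldots,Z}_{q})$, $n=1+p+q$, $t_1=1$: then $s_1=1$, $s_k=Z$ for $2\le k\le 1+p$, and $s_k=Z\cdot Z=1$ for $1+p<k\le n$, i.e.\ $\sss=(1,\underbrace{Z,\ldots,Z}_p,\underbrace{1,\ldots,1}_q)$, a tuple with $1+q$ entries $1$ and $p$ entries $Z$. Since $\Gamma=\Z/2\Z$ is skew, Items (3) and (4) of Theorem \ref{th:graded-group} let me swap adjacent $\widehat{\times}_Z$-factors up to group isomorphism, hence permute the factors of $Q(\sss)=Q(s_1)\widehat{\times}_Z\cdots\widehat{\times}_Z Q(s_n)$ arbitrarily; grouping those with $s_i=1$ first gives $Q(\sss)\cong Q(\underbrace{1,\ldots,1}_{1+q},\underbrace{Z,\ldots,Z}_p)=Q_{1+q,p}$. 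Combined with the isomorphism above, $Q_{1+p,q}\cong Q(\sss)\cong Q_{1+q,p}$.

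I expect the only real work to be the bookkeeping in the displayed computation: making the braiding relation $f_kf_\ell=Zf_\ell f_k$ come out uniformly even though the case $k=1$ is slightly special, and confirming that each $Zt_1t_k$ falls back into $\{1,Z\}$. Once the relations are checked, the cardinality argument and the factor-reordering from Theorem \ref{th:graded-group} conclude the proof without further effort.
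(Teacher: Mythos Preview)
Your proof is correct and follows essentially the same approach as the paper: the paper defines the very same elements (calling them $b_k$ instead of $f_k$), checks the same relations, and concludes by the same cardinality argument via Theorem~\ref{prop:relations}. You are slightly more careful than the paper in two places --- you verify the braiding relation $f_1f_\ell=Zf_\ell f_1$ separately (the paper only writes out the case $\ell>k\ge 2$), and you spell out the factor-reordering needed to pass from $Q(1,Z,\ldots,Z,1,\ldots,1)$ to $Q_{1+q,p}$ --- but these are refinements of the same argument, not a different route.
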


\begin{proof}
The proof is similar to the one of Lemma \ref{la:Q-iso}:
define 
$$
b_1:=e_1, \quad b_2 := e_1 e_2 = e_{12}, \quad \ldots \quad
b_n:= e_{1n}.
$$
These elements
 satisfy the relations
 $b_1^2 = t_1$, and for $\ell > k \geq 2$,
$$
b_k^2 = e_1 e_k e_1 e_k = Z t_1 t_k, \quad
b_\ell b_k = e_1 e_\ell e_1 e_k = Z^3  e_1 e_k e_1 e_\ell =
Z b_k b_\ell ,
$$
so the group generated by $b_1,\ldots,b_n,Z$ is a quotient of
$Q(t_1,Zt_1 t_2,\ldots,Z t_1 t_n)$. On the other hand, 
since $b_1 b_2 = t_1 e_2$, etc., these elements generate $Q(\ttt)$,
so for reasons of cardinality,
$Q(\ttt) \cong Q(t_1,Zt_1 t_2,\ldots,Z t_1 t_n)$.
(Another proof is by starting from the isomorphism
$Q_2\cong Q_{1,1}$, and to note that it induces the claimed
symmetry.)
\end{proof}

\subsection{The even part}
Recall that the even part $G_0$ of a $\Z/2 \Z$-graded group
$G = G_0 \sqcup G_1$ is a (normal) subgroup.
If $G$ is finite and the grading non-trivial, then
$\vert G_0 \vert = \frac{1}{2} \vert G \vert$.
The element $Z$ belongs to $G_0$, but the grading of $G_0$ is
{\it a priori} trivial.

\begin{theorem}\label{th:even}
Let $n \geq 2$ and $\ttt \in \{ 1,Z \}^n$.
The even part of $Q(\ttt)$ is isomorphic to
$$
Q(\ttt)_0 \cong Q(Z t_1 t_2,\ldots,Z t_1 t_n).
$$
It follows that
$$
(Q_n)_0 \cong Q_{0,n-1} \cong (Q_{0,n})_0, \quad
(Q_{1+p,q})_0 \cong Q_{q,p}, \quad
(Q_{p,1+q})_0 \cong Q_{p,q}.
$$
Explicitly, the even part of $Q_n$, resp.\ $Q_{0,n}$, is given by

\begin{tabular}{l | l | l | l   |     l | l   |  l | l    |   l | l  | }
$n$ & $0$   &  $1$ &  $2$  & $3$ & $4$ & $5$ & $6$ & $7$ & $8$  \cr
\hline
$Q_n$  & $C_2$ & $C_2^2$ & $D$ & $DC_4$ & $DQ$ & $DQC_2^2$ &
$D^2 Q$ & $D^3 C_4$ & $D^4$ 
\\
$Q_{0,n}$  & $C_2$ & $C_4$ & $Q$ & $QC_2^2$ & $DQ$ & $D^2C_4$ &
$D^3$ & $D^3 C_2^2$ & $D^4$ 
\\
$(Q_n)_0 \cong (Q_{0,n})_0$ & $C_2$ & $C_2$ & $C_4$ & $Q$ & $Q C_2^2$ &
 $DQ$
& $D^2 C_4$
& $D^3$ & $D^3 C_2^2$
\end{tabular}
\end{theorem}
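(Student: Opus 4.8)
The plan is to follow the template of Lemmas~\ref{la:Q-iso} and~\ref{la:diagonal}: produce explicit generators of the even part $Q(\ttt)_0$ that satisfy the defining relations of a group $Q(\sss)$ of the expected type, get a surjection $Q(\sss)\to Q(\ttt)_0$ from the presentation of Theorem~\ref{prop:relations}(5), and finish by a cardinality count. Then the listed consequences and the table are pure bookkeeping.

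First I would introduce the candidate generators $f_k := e_1 e_k\in Q(\ttt)$ for $k=2,\dots,n$; these are all even. Using the commutation rule $e_1 e_k = Z e_k e_1$ (hence $e_k e_1 = Z e_1 e_k$, since $Z^2=e_0$), the squares $e_i^2=t_i e_0$, and centrality of $Z$ from Theorem~\ref{prop:relations}(5), a short computation gives
$$
f_k^2 = (Zt_1t_k)\, e_0, \qquad f_k f_\ell = Z\, f_\ell f_k \quad (2\le k<\ell\le n),
$$
while $Z$ remains central, even, of order two. Re-indexing by $j:=k-1\in\{1,\dots,n-1\}$ and setting $\sss:=(Zt_1t_2,\dots,Zt_1t_n)\in\{1,Z\}^{n-1}$, these are exactly the defining relations (a)--(d) of $Q(\sss)$ (with the abstract $j$-th generator corresponding to $f_{j+1}$). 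By the universal property of that presentation (Theorem~\ref{prop:relations}(5)) there is therefore a surjective homomorphism
$$
\phi : Q(\sss) \longrightarrow H:=\langle f_2,\dots,f_n,Z\rangle \subseteq Q(\ttt)_0 .
$$

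Next I would show $H = Q(\ttt)_0$. By Theorem~\ref{prop:relations}(3) the even part consists of the elements $e_A$ and $Ze_A$ with $\vert A\vert$ even; writing $A=\{a_1<\dots<a_{2m}\}$ one has $e_A = (e_{a_1}e_{a_2})\cdots(e_{a_{2m-1}}e_{a_{2m}})$ by mere associativity, so $Q(\ttt)_0$ is generated by $Z$ together with the products $e_ie_j$, $1\le i<j\le n$. Each such generator lies in $H$: one has $e_1e_j = f_j$ directly, and for $2\le i<j$ the relation above rearranges to $e_ie_j = (Zt_1)\, f_if_j$, with $Z,t_1\in H$. Hence $H=Q(\ttt)_0$. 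Since $\vert Q(\sss)\vert = 2^{(n-1)+1} = 2^n = \tfrac12\vert Q(\ttt)\vert = \vert Q(\ttt)_0\vert$ (using Theorem~\ref{prop:relations}(1) and the fact that the grading is nontrivial for $n\ge1$), the map $\phi$ is a surjection of finite groups of equal order, hence an isomorphism $Q(\sss)\cong Q(\ttt)_0$, proving the main formula.

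Finally, the displayed consequences follow by specialising $\sss$, using $Z^2=e_0$ (so $Z^3=Z$) together with the reordering isomorphisms of Theorem~\ref{th:graded-group}(4), which also allow me to first permute $\ttt$ so that a convenient entry plays the role of $t_1$. For $\ttt=(1,\dots,1)$ one gets $\sss=(Z,\dots,Z)$, i.e.\ $(Q_n)_0\cong Q_{0,n-1}$; for $\ttt=(Z,\dots,Z)$ one gets $\sss=(Z,\dots,Z)$ again, i.e.\ $(Q_{0,n})_0\cong Q_{0,n-1}$; taking a $1$ (resp.\ a $Z$) as first entry and reordering the result yields $(Q_{1+p,q})_0\cong Q_{q,p}$ (resp.\ $(Q_{p,1+q})_0\cong Q_{p,q}$). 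The explicit table row is then read off from Theorem~\ref{th:classification} via $(Q_n)_0\cong Q_{0,n-1}$, with the trivial cases $n=0,1$ checked by hand. \textbf{The only slightly delicate points} are the relation verification for the $f_k$ and the observation that $f_2,\dots,f_n,Z$ already exhaust $Q(\ttt)_0$; everything else is dimension counting and reordering, exactly as in the proofs of Lemmas~\ref{la:Q-iso} and~\ref{la:diagonal}.
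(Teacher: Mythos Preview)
Your proof is correct and follows essentially the same route as the paper: define the even elements $b_k=e_1e_k$ (your $f_k$), verify they satisfy the relations of $Q(Zt_1t_2,\dots,Zt_1t_n)$, and conclude by cardinality. You are in fact slightly more careful than the paper, which simply asserts that the $b_k$ generate $Q(\ttt)_0$, whereas you explicitly reduce an arbitrary $e_ie_j$ to a word in the $f_k$ and $Z$.
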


\begin{proof} 
For $k=2,\ldots,n$, define as above the even element
$b_k := e_1 e_k$. As noted above,
 $(b_k)^2 
= Z t_1 t_k$,
and, for $i \not= j$,
$$
b_i b_j = e_1 e_i e_1 e_j = Z^2  e_1^2 e_j e_i =  Z^3 e_1 e_j e_1 e_i = 
Z b_j b_i,
$$
so we get the desired relations for the generators
$b_2,\ldots,b_n$ of $Q(\ttt)_0$. For reasons
of cardinality
we conclude that $Q(\ttt)_0 \cong Q(Z t_1 t_2,\ldots,Z t_1 t_n)$.
Using Theorem \ref{th:classification}, we get the table for the even parts.
\end{proof}


\begin{remark}
As said above, {\it a priori}, the even part is ungraded; but since it is again a Clifford group,
it  ``remembers'' the grading of the preceding extensions, and
so does $((Q_n)_0)_0$, until we reach $C_0$, which is ungraded:
the table from the theorem gives us a chain of even parts
\begin{equation*}
\begin{matrix}
C_2 &  = & (C_4)_0 & = & ((Q)_0)_0 & = (((QC_2^2)_0)_0)_0 & =
& \ldots & = & (((((( D^4)_0)\ldots)_0 
\end{matrix}.
\end{equation*}
\end{remark}

\subsection{The periodic tables of discrete Clifford groups and of Clifford algebras}\label{app:Qpq}
Finally, here are the tables classifying discrete Clifford groups and Clifford algebras.
We present them in a form akin to Pascal's triangle: for the groups,
$$
\begin{matrix}
    & & Q_0 & & \\
    &Q_{1,0} & & Q_{0,1} & \\
 Q_{2,0} & & Q_{1,1} & & Q_{0,2} \\
 & & \ldots  & &
 \end{matrix}
 $$
 and for the algebras,
 $$
\begin{matrix}
    & & \K & & \\
    &\Cl_{1,0} (\K)& & \Cl_{0,1}(\K) & \\
 \Cl_{2,0}(\K) & & \Cl_{1,1} (\K)& & \Cl_{0,2}(\K) \\
 & & \ldots  & &
 \end{matrix}
  $$
For the following tables, cf.\ also \cite{S82, S, L}.

\begin{landscape} 

$ $

\setcounter{MaxMatrixCols}{20} 

\nin
{\bf 
Table of groups $Q_{p,q}$} 
($p+q$ runs vertically and $p-q$ runs horizontally,
notation
$V = C_2 \times C_2$, $C = C_4$, $D=D_4$, see above).
Note that the column labelled $1$ is a symmetry axis of both
tables (cf.\ Lemma \ref{la:diagonal} ).

$$
\begin{matrix}
 & 8 &7 & 6 & 5 & 4 & 3 & 2 & 1 & 0 & -1 & -2 & -3 & - 4 & - 5  & -6 & -7 & -8  
 & \mbox{Cardinality} \cr 
 0 & & & & & & &  &  & C_2 & &  & & & & & & &
 2 &
  \\
 1 & & & & & & & &  V  & & C  & & & &  & & &
 & 4
  \\
2 & & & & & &  & D & & D & & Q &  & &  & & &
& 8
\\
3 & & & & &  &{ DC  } & & { D V } & & 
{  D  C } & &
  Q V    & & & & &
  & 16
  \\
4 & & & &  &  { D Q }& & { D^2 } & & 
{ D^2 } & & 
 { DQ } & & D Q  &  & & &
 & 32
\\
5 & & & &
DQ V
& & D^2 C
& & D^2 V
& & D^2 C
& & D Q V 
& & D^2 C   & & &
& 64
\\
6 & & &
D^2  Q & & D^2 Q & & D^3 & & D^3 & & D^2 Q & & D^2 Q & & D^3  & &
& 128
\\
7 &  &
D^3 C  & & D^2Q  V  & & D^3 C  & & D^3 V  & & D^3 C 
& & D^2 Q  V  & & D^3 C  & & D^3  V &
& 256
\\
8 & D^4 & & D^3 Q & & D^3 Q & & D^4 & & D^4 & & D^3 Q & & D^3 Q
& & D^4 & & D^4 & 512
\\
\end{matrix}
$$

\medskip
\nin
Remark.
For very detailed information on the four groups of cardinality 16 listed in
line 3 of the table, see \cite{D}, page
529 for the group $DV$,  page 530 for  $DC$, and page 531 for $QV$.
(You can recognize them  in loc.cit.\  by looking at the structure of the center, containing
the derived group -- always of type $C_2$ -- and the quotient with respect to this
$C_2$ -- always of type $C_2^3$). 
E.g., $\Aut(DC)$ is exactly the hyperoctahedral group of order $48$, while 
$\Aut((QV)$ is bigger, of cardinality 192, and $\Aut(DV)$ is smaller, of cardinality
$32$.
For the groups
$D^2 = D\times_Z D$ and $DQ = D \times_Z Q$ (line 4), cf.\ loc.\ cit.,
p. 626, 627.

\msk

\nin
{\bf Table of classification of Clifford algebras $\Cl_{p,q}(\K)$}
(notation $\bC_\K= \K\oplus i \K$, 
$\bH_\K = \K\oplus i \K \oplus j \K \oplus k\K$, see above):
\ss

{\tiny
$$
\begin{matrix}
 & 8 &7 & 6 & 5 & 4 & 3 & 2 & 1 & 0 & -1 & -2 & -3 & - 4 & - 5  & -6 & -7 & -8 \cr 
 0 & & & & & & &  &  & \K & &  & & & & & &
  \\
 1 & & & & & & & & \K^2  & & \bC_\K & & & &  & & &
  \\
2 & & & & & &  & M_2(\K) & & M_2(\K) & & \bH_\K &  & &  & & &
\\
3 & & & & &  &{ M_2(\bC_\K) } & & { M_2(\K)^2 } & & 
{  M_2(\bC_\K) } & &
  \bH_\K^2   & & & & &
  \\
4 & & & &  &  { M_2(\bH_\K) }& & { M_4(\K) } & & 
{ M_4(\K) } & & 
 { M_2(\bH_\K) } & & M_2(\bH_\K)  &  & & &
\\
5 & & & &
M_2(\bH_\K)^2 
& & M_4(\bC_\K)
& & M_2(\K)^2 
& & M_4(\bC_\K)
& & M_2(\bH)^2
& & M_4(\bC_\K)  & & &
\\
6 & & &
M_4(\bH_\K) & & M_4(\bH_\K) & & M_{8}(\K) & & M_{8}(\K) & & 
M_4(\bH_\K) & & M_4(\bH_\K) & & M_8(\K)  & &
\\
7 &  &
M_8(\bC_\K) & & M_4(\bH)^2 & & M_8(\bC_\K) & & 
M_8(\K)^2 & & M_8(\bC_K)
& & M_4(\bH_\K)^2 & & M_8(\bC_\K) & & M_8(\K)^2 &
\\
8 & M_{16}(\K) & & M_8(\bH_\K) & & M_8(\bH_\K) & & M_{16}(\K) & & 
M_{16}(\K) & & M_8(\bH_\K) & & M_8(\bH_\K)
& & M_{16}(\K) & & M_{16}(\K)
\\
\end{matrix}
$$
}
\end{landscape}

\end{document}